\DeclareFontFamily{U}{matha}{\hyphenchar\font45}
\DeclareFontShape{U}{matha}{m}{n}{
  <5> <6> <7> <8> <9> <10> gen * matha
  <10.95> matha10 <12> <14.4> <17.28> <20.74> <24.88> matha12
  }{}
\DeclareSymbolFont{matha}{U}{matha}{m}{n}
\DeclareFontFamily{U}{mathx}{\hyphenchar\font45}
\DeclareFontShape{U}{mathx}{m}{n}{
  <5> <6> <7> <8> <9> <10>
  <10.95> <12> <14.4> <17.28> <20.74> <24.88>
  mathx10
  }{}
\DeclareSymbolFont{mathx}{U}{mathx}{m}{n}
\DeclareMathSymbol{\obot}         {2}{matha}{"6B}
\DeclareMathSymbol{\bigobot}       {1}{mathx}{"CB}
\numberwithin{equation}{section}
\theoremstyle{plain}
\newtheorem{proposition}{Proposition}[subsection]
\newtheorem{conj}[proposition]{Conjecture}
\newtheorem{cor}[proposition]{Corollary}
\newtheorem{lem}[proposition]{Lemma}
\newtheorem{thm}[proposition]{Theorem}
\newtheorem{prop}[proposition]{Proposition}
\theoremstyle{definition}
\newtheorem{eg}[proposition]{Example}
\theoremstyle{remark}
\newtheorem{rmk}[proposition]{Remark}
\numberwithin{equation}{section}
\newcommand{\BA}{{\mathbb {A}}} 
\newcommand{\BC}{{\mathbb {C}}} 
 \newcommand{\BF}{{\mathbb {F}}}
 \newcommand{\BH}{{\mathbb {H}}}
 \newcommand{\BP}{{\mathbb {P}}}
\newcommand{\BQ}{{\mathbb {Q}}} \newcommand{\BR}{{\mathbb {R}}}
 \newcommand{\BZ}{{\mathbb {Z}}}
\newcommand{\cK}{{\mathcal {K}}} 
\newcommand{\cO}{{\mathcal {O}}} 
 \newcommand{\cR}{{\mathcal {R}}}
\newcommand{\RM}{{\mathrm {M}}}
\newcommand{\fa}{{\mathfrak{a}}} \newcommand{\fb}{{\mathfrak{b}}}
\newcommand{\fc}{{\mathfrak{c}}}
\newcommand{\fm}{{\mathfrak{m}}} 
\newcommand{\fo}{{\mathfrak{o}}} \newcommand{\fp}{{\mathfrak{p}}}
 \newcommand{\fA}{{\mathfrak{A}}} 
 \newcommand{\fD}{{\mathfrak{D}}}
 \newcommand{\fN}{{\mathfrak{N}}}
 \newcommand{\fR}{{\mathfrak{R}}}
\newcommand{\wt}{\widetilde}\newcommand{\ol}{\overline}
\newcommand{\wh}{\widehat}
\newcommand{\pair}[1]{\langle {#1} \rangle}
\newcommand{\incl}{\hookrightarrow} 
\newcommand{\nrd}{{\mathrm{nrd}}}\newcommand{\rf}{{\mathrm{f}}}
\newcommand{\bsl}{\backslash}
\newcommand{\lb}{\left(} \newcommand{\rb}{\right)}
\newcommand{\Aut}{{\mathrm{Aut}}}
 \newcommand{\Cond}{{\mathrm{Cond}}}
\newcommand{\Cl}{{\mathrm{Cl}}}
\newcommand{\Gal}{{\mathrm{Gal}}} \newcommand{\GL}{{\mathrm{GL}}}
\newcommand{\Hom}{{\mathrm{Hom}}}
\newcommand{\Ind}{{\mathrm{Ind}}}
\newcommand{\ord}{{\mathrm{ord}}} 
\newcommand{\PGL}{{\mathrm{PGL}}} \newcommand{\Pic}{\mathrm{Pic}}
\newcommand{\PSL}{{\mathrm{PSL}}}
\newcommand{\Nm}{{\mathrm{Nm}}} 
\newcommand{\JL}{{\mathrm{JL}}}\newcommand{\ch}{{\mathrm{ch}}}
\newcommand{\gn}{{\mathrm{gn}}}
\newcommand{\ram}{{\mathrm{ram}}}
\newcommand{\SL}{{\mathrm{SL}}}
\newcommand{\SO}{{\mathrm{SO}}}
\newcommand{\St}{{\mathrm{St}}}
\newcommand{\ur}{{\mathrm{ur}}}  
\newcommand{\vol}{{\mathrm{vol}}}
\newcommand\supervisor[1]{\def\@supervisor{#1}}
\newcounter{elno}
\renewcommand{\cong}{\simeq}
   \subjclass[2010]{Primary 	11F70,  11G18.	Secondary  	    11F03,  14C15,      14G35   ,      14H45.}
 \author{Congling Qiu}
\begin{document} 
  \title{Finiteness properties for Shimura curves and modified diagonal cycles}
\begin{abstract}
We prove that only finitely many Shimura curves can have gonality bounded by a given number, and we study the computability of this finite set.
Motivated by the relation between hyperellipticity (that is, gonality 2) and the vanishing of the modified diagonal cycle, we conjecture that such vanishing occurs for only finitely many Shimura curves.
We establish several finiteness and classification results toward this conjecture and, as a by-product, obtain explicit examples of curves with vanishing modified diagonal cycles.
Our computations are based on modular form data from the database \textsf{LMFDB}, and some of them are carried out using the computer algebra system \textsf{Sage}.
 \end{abstract} 
\maketitle 
 \tableofcontents

  \section{Introduction} 

    \subsection{}\label{1.0}

 Shimura curves over $\BC$
are   compactified quotients of the complex upper half-plane $\BH$ by congruence   Fuchsian groups (\Cref{SCC}).  
They  are ubiquitous  in modern number theory and many related fields.
For the  subgroup  of matrices in $\SL_2(\BZ)$ that are upper triangular modulo a positive integer   $N$, the Shimura curve is the  famous modular curve $X_0(N)$.

  The   finiteness  of Shimura curves satisfying a given algebro-geometric property  has long been     studied. One pioneering and
prominent work is Ogg's  classification  of   hyperelliptic curves among   $X_0(N)$'s and analogous Shimura curves  \cite{Ogg,Ogg1}. 
We propose to study  the    finiteness   and    
classification  of  Shimura curves  with a   property, closely related to hyperellipticity but more  intricate, that we now elaborate.

 Let $X$ be a curve, always assumed to be connected smooth projective.  
  For a base point $p$ on $X$,
Gross and Schoen \cite{GS}  defined  the modified  diagonal 1-cycle $\Delta_{p}$ on $X^3$ as follows.
 Set 
\begin{alignat*}{3}\Delta_{12}&= \{(x,x,p):x\in X\},\quad \Delta_{23}&&= \{(p,x,x):x\in X\}, \quad
\Delta_{31}& &= \{(x,p,x):x\in X\}, \\
\Delta_{1}&=  \{(x,p,p):x\in X\},\quad \ 
\Delta_{2}&&=   \{(p,x,p):x\in X\},\quad  \
\Delta_{3}& &= \{(p,p,x):x\in X\}.
\end{alignat*}
Then $\Delta_{p}$ is  the    algebraic cycle
\begin{align*}
\Delta-\Delta_{12}-\Delta_{23}-\Delta_{31}+
\Delta_{1}+\Delta_{2}+
\Delta_{3}.
\end{align*} 
Moreover, if $X$ is   hyperelliptic and $p$ is   a Weierstrass point, they proved that $\Delta_{p}$ is torsion under rational equivalence. 
More generally, replacing $p$ by  any degree 1 divisor  $e$ on $X$, we can literally define the   modified  diagonal  cycle $\Delta_{e}$  in the same way (see \cite{Zha10}). 
Then for $X$ of genus at least 2,
$\Delta_{e}$ is torsion under rational equivalence  only if $e$ is a multiple of the canonical divisor. See \cite[Proposition 2.3.2]{QZ1}.
     We call $X$ \textit{critical} if the modified  diagonal  cycle  is torsion for  this $e$.
  The search for non-hyperelliptic critical curves has recently attracted considerable attention   \cite{BLLS,BS,Lat,QZ1,LS}.    
 
   


 When specialized to Shimura curves, the modified  diagonal  cycle is closely related to $L$-functions and automorphic forms through the generalized Gross--Kudla conjecture \cite{GK,YZZ}. 
This also falls into the scope of the more general arithmetic Gan--Gross--Prasad conjecture \cite{GGP,Zha12}. 
Using this relation, critical Shimura curves can be used to find $L$-functions with vanishing orders  larger than 1 \cite{Qiuvan}.

  \subsection{} \label{1.1}   
  By  \cite[Theorem 1.5.5]{Zha10}, $X$ is critical if and only if 
  the Ceresa cycle  $[X]-(-1)^\ast[X] $ on
  the Jacobian variety of $X$  is torsion, where we embed $X$ into $J_X$ by $x\mapsto x-e$. 
  Then by a classical result of Ceresa  \cite{C}, in genus at least 3,    a general curve  over $\BC$ is not critical.   
This is also a well-known fact  for hyperellipticty.  
   Since  Shimura curves can be concretely constructed using congruence subgroups, we are particularly interested in the computational 
  aspects of such common rareness of hyperelliptcity and  criticalnes

  
  \begin{thm}\label{thm:hec} 
   There
exist only finitely many congruence  Fuchsian groups   up to  conjugation whose  associated 
Shimura curves are hyperelliptic. 
Moreover, this finite set is contained in a  computable 
 set of  explicit  arithmetic Fuchsian groups. 
 Here, an  explicit   arithmetic Fuchsian groups is given by an  explicit  finite set of generators  and an  explicit  finite set of relations.

  \end{thm} 
    We actually prove the same result for curves of a bounded gonality (\Cref{thm:gn}).
Under the theorem, to compute this finite set of congruence  Fuchsian groups, one way is to determine whether an explicit  arithmetic Fuchsian group is congruence and  whether  the associated 
Shimura curve is hyperelliptic. However, these two problems are out of the scope of this paper.

 In \cite{QZ1}, we   conjectured that  hyperelliptic curves are the ``majority" of critical curves.
 This leads us to believe the same finiteness of   critical Shimura curves. 

    \begin{conj}\label{conj:diag}
There
exist only finitely many    critical Shimura curves up to isomorphism.
 Moreover, the finite set of  critical Shimura curves is    computable.
 
  \end{conj} 
   
   Since the first version of this paper was released, progress has been made on \Cref{conj:diag} for modular curves in \cite{KLQY,LR}.
   
    \subsection{}\label{1.3} 
 It is usually hard to verify the criticalness  directly. 
  In  our previous work on  modified diagonal cycles \cite{QZ1},  we considered curves with many automorphism and gave a sufficient  representation-theoretical criterion for criticalness.  In  another  work  \cite{QZ2}, this  criterion  was extended to  certain Shimura curves  using automorphic representations  as follows.         Let $F$ be a totally real number field.
      A quaternion algebra  $B$   over   $F$ is called  {\em almost definite}
if  $B$ is  split at  one archimedean place $\tau:F\incl \BR$ of $F$ and division at all other archimedean places of $F$.
Let  $B^+\subset B^\times$ be the subgroup of elements with totally positive norms (equivalently, with positive determinants at $\tau$).
Then $ B^+\subset B_\tau^\times\cong \GL_2(\BR)$ acts on $\BH$ by linear fractional transformations with $ F^\times\subset B^+$ acting trivially. 
  Consider a    subgroup $\Gamma\subset B^+/ F^\times$  with
     an order $\cO$ of $B$  such that the image of  $\cO \cap B^+ $ in $B^+/ F^\times$
   is of finite index in $\Gamma$. 
 In this case, the smooth compactified quotient $X = \tau (\Gamma)\bsl \BH^*$  is   a   Shimura curve over $\BC$,
and   $H^{1,0}(X)$ generates  (and is  a subspace   of) a direct sum $\pi_X$ of  automorphic representations  of $B^\times$.  See \Cref{Good  curves} for  more details.
 We call  $X$ \textit{auto-critical} if
there is no nonzero diagonal invariant trilinear forms on $\pi_X$ (\Cref{Good  curves}).     If $X$ is \textit{auto-critical}, then 
  $X$ is critical (\Cref{vancor2}).  
  
   \begin{conj}\label{conj:tril}
There
exist only finitely many   auto-critical   Shimura curves up to isomorphism.
Moreover, the finite set of  auto-critical   Shimura curves is    computable.
   
\end{conj}

     \begin{thm} \label{thm:finiteintro}
  
 The  set of   quaternion algebras  associated to auto-critical   Shimura curves  is  finite.
 Moreover, this finite set is contained in a  computable 
 set of  explicit  almost-definite quaternion algebras over totally real fields.

    \end{thm} 
  \Cref{thm:finiteintro}    will be proved in the end of \Cref{Good  curves}. 
  
When $F=\BQ$,
we give  classifications of auto-critical Shimura  curves with  classical level structures  in \Cref  {thm:auto-critical}, 
\Cref  {thm:Shauto-critical} and \Cref  {thm:ShAauto-critical}.    In particular, we obtain some  examples of critical curves.  
To prove the classification, we follow an algorithm based on the LMFDB database in \Cref{newforms in the database LMFDB}. On top of this, while most of the computations are "by hand",  some of then are carried out using the computer algebra system Sage. 
        \subsection{}        
In Section 2, we prove the 
finitenss of Shimura curves with  bounded  gonality.
In Section 3, we study the
finitenss of auto-critical Shimura curves.
In Section 4, we classifify  auto-critical Shimura curves over $\BQ$.

    \subsection*{Acknowledgments}
The author  would like to thank   Pete~L.~Clark,  David~Loeffler, Adam Logan, Bjorn~Poonen,  John~Voight, Jared~Weinstein, 
Shouwu~Zhang  and Wei~Zhang for their comments and helpful discussions.

 \section{Finitenss   of  Shimura curves with  bounded gonality}  %
 \addtocontents{toc}{\protect\setcounter{tocdepth}{2}}
 
 In this section,
 we   recall the notion and properties  of Shimura curves over $\BC$, and proves some   finiteness   results related to gonality.

  \subsection{Quaternion algebras}
  Shimura curves are defined in terms of quaternion algebras. So we study the latter first.
We      refer   to  \cite{Voi2} for the encyclopedia  of  quaternion algebras, and \cite[Section 1]{KV} for a quick introduction.
Let  $F$ be a number field and let $R=\cO_F$ (following notations in    \cite{Voi2}).
When an order $O$ in $B$ over a number field $F$ is referred,   we understand that a set of generators   of $O$ as an  $R$-module  in $B$ is given explicitly.

    \begin{prop}\label{prop:Order} 
For a quaternion algebra $B$ over a number field,   the set of maximal orders in $B$ up to conjugation by $B^\times$  is finite and computable.

 \end{prop}
   \begin{proof} 
Since these maximal orders are all locally isomorphic, by 
\cite[Lemma 17.4.13]{Voi2}, we only need determine the  (right) class set of one maximal order. 
First, by  \cite[Theorem 7.14]{Voi1}, we can  compute one maximal order.  Second, by  \cite[Algorithm 4.4]{KV}, 
the  (right) class set can be computed. The proposition is proved.
   \end{proof}
   
Let  $\Pic_R(O)$ be the   group of  invertible  fractional two sided $O$-ideals modulo   the   group of  principal fractional  $R$-ideals  \cite[(18.4.6)]{Voi2}.

              \begin{lem}[{\cite[Corollary 3.3]{KV}}]\label{lem:Pic} 
           For a  maximal order  $O$,   a set of
representatives for the two-sided invertible ideal classes of $O$ is computable. 

 \end{lem}

    \begin{prop}\label{prop:Pic} 
For any order $O$, we can compute a set of
representatives for the two-sided invertible ideal classes of $O$. In particular, $|\Pic_R(O)|$ is computable.

 \end{prop}
   \begin{proof} 
   
By  \cite[Theorem 7.14]{Voi1}, we can  compute one maximal order $O'$ containing $O$.
The rest of the proof is the same as \cite[Proposition 18.4.10]{Voi2}, with the finiteness of $\Pic_R(O')$ replaced by \Cref {lem:Pic}.
   \end{proof}
   The following lemma was advised by John~Voight.
  \begin{lem}\label{lem:ordgen} 
     For any order $O$ and a finitely generated subgroup  $G\subset O^\times$ with an explicit finite  set $S$ of generators, the order generated by $G$ is computable.
     
  \end{lem}  
   \begin{proof} 
In order for $G\subset O^\times$ to be of finite index,     $S$ must contain at least 2 non-commutative elements, say $X,Y$. 
Then $F + FX + FY$ is  already 3-dim in $B$ and it can not be closed under multiplication since $B$ has no such subalgebras. 
So one of $X^2, Y^2, XY, YX$ must not in $F + FX + FY$. Add it  and the remaining generators of $S$
to $R+ RX + RY$ to get an $R$-lattice $L\subset O$ of finite index (additively). Compute a set of generators   of $L$ as an  $R$-module. Keep adding the intersection terms of the generators  to $L$. This process stops after finite steps and we get an $R$-lattice in $O$ closed under multiplication. This is the     order generated by $G$. \end{proof}

 Let $B^1$ be the group of elements of norm 1 in $B$. 
Let    $PB^\times = B^\times/F^\times $ and  $PB^1 = B^\times/\{\pm 1\} $
        
    A quaternion algebra  $B$   over  a totally real  number field $F$ is called  {\em almost definite}
if  $B$ is  split at one archimedean place $\tau:F\incl \BR$ of $F$ and division at all other archimedean places of $F$.

    \begin{prop}\label{prop:Norm} 
Let $B$  be  a definite or almost definite quaternion algebra over a totally real  number field  $F$. For an order $O$ in $B$, the normalizer 
$$N_{PB^1}(O) : = \{b\in B^1: b^{-1}O b = O\}$$ of $O$ in $PB^1$ is finitely presented and  computable (i.e., a finite set of generators and a finite set of  relations defining $N_{PB^1}(O)$ is computable).  Moreover, the index of $(O\cap B^1) /\{\pm 1\}$ in  $N_{PB^1}(O)$  has an explicit upper bound.

 \end{prop}  
   \begin{proof} 
   Let $B^0\subset B$ be the subspace of elements of reduced trace 0  and $O^0 = O\cap B^0$.
Compute a  $\BZ$-basis of $O^0$, and thus realize $\GL(B)$ as $\GL_n$ over $\BQ$ with $n = 3[F:\BQ]$.
   We will  apply     \cite{GrunSe}  by finding an \textit{explicitly given}
 $\BQ$-subgroup  
of $G$ of  $\GL_n$   (i.e., defined by explicit $\BQ$-polynomials, see the first page of \cite{GrunSe}), and
   realizing $N_{PB^1}(O)$  as a finite index subgroup  $G\cap  \GL_n(\BZ)$.
   
   The reduced norm on $B$ makes $V$ a quadratic space. The action of $a\in PB^\times$ on $x\in B $ by $axa^{-1}$  identifies
$PB^\times$ as $G := SO(V)$. See \cite[Proposition 4.5.10]{Voi2}. Moreover, by definition, this identification can be explicitly computed.
Claim:
$N_{PB^1}(O)$ is a finite index subgroup  $G\cap  \GL_n(\BZ)$ with  an explicit upper bound for this index.
This is  proved in two steps with the middleman  $N_{PB^\times}(  O )$,   the  normalizer of $O$    in $PB^\times$.
First, we have $$G\cap  \GL_n(\BZ) = N_{PB^\times}(O^0) = N_{PB^\times}(R\oplus O^0) ,$$
  the corresponding normalizers    in $PB^\times$.  
  On $B$,   the reduced norm defines a quadratic form, and for an $R$-lattice $L$, let $L^\vee$ be the dual lattice.  
  Then $N_{PB^\times}(L)=N_{PB^\times}(L^\vee)$. 
  Consider  the action of $N_{PB^\times}(R\oplus O^0)$ on $   (R\oplus O^0)^\vee/O^\vee $.
  Let $$Q : =  (R\oplus O^0)^\vee/O^\vee \cong O/ (R\oplus O^0).$$ Then we have  an exact sequence
     $$1\to N_{PB^\times}(  O ) \to N_{PB^\times}(R\oplus O^0) \to \GL(Q).$$  
Since the  reduced trace gives 
$Q\incl R/2R\cong \BZ/2\BZ^{\oplus [F:\BQ]}$.
So  $N_{PB^\times}(  O )$   is a finite index subgroup  $G\cap  \GL_n(\BZ)$ with  an explicit upper bound for this index.
Second, let $l$ be the order of 
   $  N_{PB^\times}(O)/(  O^\times/ R^\times)$
   and $m$ the index of $(O\cap B^1)/\{\pm 1\}  \incl   O^\times/ R^\times$. Then by the injective homomorphism 
   \begin{equation}N_{PB^1}(O)/(  (O\cap B^1) /\{\pm 1\} ) \incl N_{PB^\times}(O)/(  O^\times/ R^\times),\label{NPB1O}\end{equation} 
     the index of $N_{PB^1}(O) \incl N_{PB^\times}(O)$  is  bounded by $lm$.
By  \Cref{prop:Pic} and the injection  $N_{PB^\times}(O)/(  O^\times/ R^\times)\incl \Pic_R(O)$  in \cite[(18.5.5)]{Voi2}, $l$ is computable. 
 By taking reduced norm, we have $m\leq |R^\times/\{a^2: a\in R^\times\}|$ which can be explicit bounded using Dirichlet's unit theorem.
 The claim is thus proved.  Moreover, by \eqref{NPB1O}, 
 $[N_{PB^1}(O):  (O\cap B^1) /\{\pm 1\} ]\leq l.$ This proves the second claim in the proposition.
 
 Now we want to apply   \cite{GrunSe}.  We need to verify the three conditions at the bottom of the first page and the top of the second page of loc. cit.. 
The first two are the claim in the last paragraph. The third amount to 
  determining if any $\alpha\in B^\times$ has norm being a square in $F^\times$.  It is well-known that we can factor polynomials over number fields, so 
the third  condition holds. 
Thus we can apply
 \cite[ALGORITHM B., p 533]{GrunSe}  to  compute a  finite generating set $X$ of $N_{PB^1}(O)$.
Finally, to compute a  finite set of defining relations for $N_{PB^1}(O)$  on the generating set $X$, we  use that $B$ is definite or almost definite. In the first case $G_\BR$ is connected, and in the second case,
  by the explicit identification of $PB^\times$ and $G$, the connected component of $G_\BR$ is given by the norm 1 condition.  Thus by \cite[Remark, p 582]{GrunSe}, we can compute a  finite set of defining relations for $N_{PB^1}(O)$  on the generating set $X$.   The proposition is thus proved.  
   \end{proof}

  \subsection{Shimura curves   over $\BC$}\label{SCC}

 A  discrete   subgroup  $\Gamma\subset \PSL_2(\BR)$  (i.e., a Fuchsian group) is called {\em arithmetic}   if 
 there is an almost definite   quaternion algebra  $B$ as above, so that $B^1_\tau\cong \SL_2(\BR)$   and  an order $\cO$ of $B$ such that   $(\tau (\cO) \cap \SL_2(\BR))/\{\pm1\}$  is  commensurable to  $\Gamma$. 
 An  arithmetic subgroup $\Gamma\subset \PSL_2(\BR)$ is called  a
{\em congruence}  subgroup if   $(\tau (\cO) \cap \SL_2(\BR))/\{\pm1\}\subset \Gamma$ for some order $\cO$ of $B$. 
In this case,   the smooth compactified quotient $\Gamma\bsl \BH ^* $  is called a   Shimura curve over $\BC$. 
 The Shimura curves over $\BC$ defined in  \Cref{1.3}  fall in this class by the embedding
  $\tau( B^+/F^\times)\subset \PSL_2(\BR)$.

 Now we consider gonality of Shimura curves. For a curve (smooth projective   connected) $C$ over a field $k$, its gonality  $\gn(C)$ is the lowest degree of a nonconstant rational map from $C$ to $\BP^1$.

\begin{thm}\label{thm:gn}  
    There
exist only finitely many  congruence  subgroups $\Gamma\subset \PSL_2(\BR)$ up to conjugation  
such that the
Shimura curves  $X_\Gamma$ have gonalities  bounded by a given number. 
Moreover, this finite set is contained in a  computable 
 set of  explicit  arithmetic Fuchsian groups. 
  Here, an  explicit   arithmetic Fuchsian group is given by an  explicit  finite set of generators  and an  explicit  finite set of relations.
 
   \end{thm}

   For the proof we need some preparations. 
  \begin{prop}[Hermite--Minkowski]\label{prop:HM} 
  For   real  numbers $d,M$, the set of number fields   with  degree  bounded  by $d$ and discriminant bounded   by $M$ is finite and  computable.
  
 \end{prop}
   \begin{proof} The standard proof of the finiteness part using Minkowski's Lattice Point Theorem produces  an explicit  uniform bound on the coefficients of the minimal polynomial of  a generator of such a number field. The lemma follows.
   \end{proof}
      
  The hyperbolic volume of $\Gamma\bsl \BH$ is finite for an  arithmetic (in particular, congruence) Fuchsian group $\Gamma\subset \PSL2(\BR)$, and is called the covolume of $\Gamma$.  
  \begin{thm}\label{thm:finvol} 
 There
exist only finitely many arithmetic Fuchsian groups $\Gamma$  up to  conjugation whose covolumes are bounded by a given number. 
Moreover, this finite set is contained in a  computable 
 set of  explicit  arithmetic Fuchsian groups. 
 
  \end{thm}
   \begin{proof} The finiteness follows from
   Takeuchi 
\cite[(2.3), Theorem 2.1]{Tak1}.  We prove the    computability by refining the proof in loc. cit.

Recall that such $\Gamma$ is associated with a pair $(F,B)$ as above. 
   By the proof on \cite[p 384]{Tak1} on the finiteness of such $F$ and \Cref{prop:HM}, the set of such $F$  is    computable.
After fixing $F$, by the proof on the finiteness of such $B$ in loc. cit., the set of such $B$  is    computable.
Then by \Cref{prop:Order},   the set of maximal orders in $B$ up to conjugation by $B^\times$ is    computable.

After computing a maximal order $O'$ of $B$, by \cite[p 384]{Tak1}, there is a   subgroup $\Gamma^{(2)}$ of $O'\cap B^1$ of an explicitly bounded index such that
$\Gamma^{(2)}\{\pm1\}\subset \Gamma \{\pm1\} \subset N_{PB^1}(\Gamma^{(2)})$. By the low index subgroup algorithm
   \cite{DS,Sim}, we can enumerate  an explicit finite list of possibilities of $\Gamma^{(2)}$. And we fix one of them.
   Let $O$ be   the order generated by $\Gamma^{(2)}$, which is computable by \Cref{lem:ordgen}.
Then we have a commutative  diagram of injections
   \begin{equation} \xymatrix{  \Gamma^{(2)}\{\pm1\}  \ar[r]\ar[d] &N_{PB^1}(\Gamma^{(2)})\ar[d] \\
(O\cap B^1) /\{\pm 1\} \ar[r]
&N_{PB^1}(O^\times).}
 \end{equation}
 By the second part of \Cref{prop:Norm} and that 
the index of $\Gamma^{(2)}\{\pm1\}$ in $(O\cap B^1) /\{\pm 1\}$ is explicitly bounded, the index of  
$\Gamma^{(2)}\{\pm1\}$  in $N_{PB^1}(O^\times)$  is explicitly bounded. 
So is the index of $\Gamma$.
 By the first part of \Cref{prop:Norm}, $N_{PB^1}(O^\times)$ is  finitely presented and  computable.
Then the low index subgroup algorithm
   \cite{DS,Sim} produces an explicit finite list of possibilities of $\Gamma$. The theorem is proved.
    \end{proof}

   Fix $\lambda\in (0,1/4]$ such that for any  congruence arithmetic Fuchsian group 
$\Gamma$,
the first nonzero eigenvalue of the Laplacian on $X_\Gamma:=\Gamma\bsl \BH^*$    with respect to the hyperbolic metric is at least $\lambda$. 
 For example, we have the classical lower bound $3/16$ by Selberg (and  the Jacquet--Langlands correspondence, see \cite{Vig}, see also  \cite[(5)]{LRS} for an improvement of  $3/16$). 

  \begin{proof} [Proof of \Cref{thm:gn}]

 Li and Yau \cite{LY} gave a linear lower   bound  for  gonality of a curve $X/\BC$ in terms of the volume of $X$ and
the first non-zero eigenvalue $\lambda_1(X)$ of the Laplacian
 $$ \gn(X)\geq \frac{\lambda_1(X)}{2\pi }\vol(X).$$
 In the case $X_\Gamma$ where $\Gamma$ is congruence,   $\lambda_1$ may be  uniformly replaced by $\lambda$. 
 So  \Cref{thm:finvol}  implies the  theorem. 
\end{proof}

\begin{rmk}
  (1)   Unlike  \Cref{thm:finvol}, the finitness in \Cref{thm:gn} can not be true for  arithmetic Fuchsian groups. 
  As pointed out to us by  S.~Zhang, by Belyi's theorem, any curve over $\ol\BQ$ is  of the form $  \Gamma\bsl \BH^*$  for some $\Gamma$  of finite index in $\SL_2(\BZ)$ (thus arithmetic).  
  As there are hyperelliptic curves over $\ol\BQ$ of arbitrary large genus,   both   Theorem \ref{thm:gn} and Conjecture \ref{conj:diag} fail if we replace ``congruence" by ``arithmetic".

(2) There is 
 a similar finiteness property  for genus,  by
 Long, Machlachlan, and Reid  \cite{LMR}.  (They only stated their result for genus 0, but the argument works in general.)
 Besides the above finitness result of Takeuchi and  the Selberg lower   bound, they also used   a remarkable result of  Zograf  relating the genus of  $\Gamma\bsl \BH$ to the covolume of  $\Gamma$.
 We are obliged to mention that 
  \Cref{thm:gn} is also implied by this finiteness  for genus,  and  the lower bound  on the gonality in terms of genus for  congruence  quotients of $\BH$ by  Abramovich \cite{Abr} (see for example \eqref{Xgamma}).
  But this  implication is really a detour, as 
  the gonality   bound is proved using Li and Yau \cite{LY}.
  
  (3) The finiteness and algorithm given in    \Cref{thm:gn} is obviously very  inefficient. When specialized to Shimura curves with  classical levels as defined in the next section, we get   better  results.  For example,  \Cref{lem:Qo1} and its proof, as well as \Cref{eqgon}.  \end{rmk}

 \section{Finitenss   of auto-critical Shimura curves}  %
 
\addtocontents{toc}{\protect\setcounter{tocdepth}{2}}
 In this section,
 we first recall the notion and properties  of Shimura curves over a totally real number field.
 Then we define auto-critical Shimura curves and  prove the related  finiteness result.

    \subsection{Shimura curves over a totally real number field}\label{1.4}
 
  Let $F$ be a totally real number field, $\BA$  its ring of adeles and $\BA_\rf$  its ring of finite adeles.
  Let $B$  be an almost definite   quaternion algebra    over $F$,  split at one archimedean place $\tau:F\incl \BR$ of $F$ and division at all other archimedean places of $F$.

  For an open compact-modulo-center subgroup $K$ of $B^\times(\BA_\rf)$, we have the smooth   compactified   Shimura curve $ X_K$ for  $B^\times$ of level $K$ over $F$ \cite[3.1.4]{YZZ}.
   The  complex uniformization  of  $X_K$ via the distinguished archimedean place $\tau: F\incl \BR\subset \BC$ is given by
\begin{equation}X_{K,\BC} \cong B^+ \bsl \BH \times B^\times(\BA_\rf)/K\coprod \{\text{cusps}\}.\label{Xcomplex}\end{equation}
Here $B^+\subset B^\times$ is the subgroup of elements with totally positive norms (equivalently, with positive norms at $\tau$),  and 
 the cusps exist if and only if $X_K $ is a modular curve, i.e. $F=\BQ$, and $B $ is the matrix algebra.

Let us look at some classical examples.
For a non-archimedean local field $E$, let   $\fm_E$ be the maximal ideal of  the ring of integers $\cO_E$. 
For $n\geq 0$, let  
\begin{equation*} 
\Gamma_0\lb\fm_E^n\rb=\left\{
\begin{bmatrix}
a & b \\
c & d 
\end{bmatrix}\in \GL_2(\cO_E):c\in \fm_E^n\right\},
\end{equation*}
\begin{equation*}
\Gamma_1\lb\fm_E^n\rb=\left\{
\begin{bmatrix}
a & b \\
c & d 
\end{bmatrix}\in \GL_2(\cO_E):c\in \fm_E^n,d\in 1+ \fm_E^n\right\},
\end{equation*}
\begin{equation*}
\Gamma\lb\fm_E^n\rb=\left\{
\begin{bmatrix}
a & b \\
c & d 
\end{bmatrix}\in \GL_2(\cO_E):b,c\in \fm_E^n,\ a,d\in 1+\fm_E^n\right\}.
\end{equation*}

     Let $S_B$ be the finite set of finite places $v$ of $F$ such that $B_{v}:=B(F_v)$ is a division quaternion algebra.  
     For $v\not\in S_B$, $B_v^\times\cong \GL_2(F_v)$.   Then $\GL_2\lb \cO_{F_v}\rb $ is its unique maximal  open compact subgroup up to conjugation.  
  For $v\in S_B$, let $\cO_{B_v}$ be the unique maximal order of   $B_{v}$ and $\fm_{B_v}:=\cO_{B_v}\bsl \cO_{B_v}^\times$ its unique maximal (two-sided)   ideal. Then $\cO_{B_{v }}^\times $ is the unique maximal  open compact subgroup of $B_v^\times$.

                 For an ideal $\fN$ of $\cO_F$ coprime to all $v\in S_B$,  and an ideal $\fA$ of $\cO_F$ such that $\ord_v(\fA)=0,v\not \in S_B$ and $\ord_v(\fA)\geq 1,v \in S_B$,   
      let $$ X_*^\fA(\fN) =X_K, Y_*^\fA(\fN) =X_{K\BA_\rf^\times} \text{ for } *\in \{0,1,\emptyset\}$$ where 
      $$K=K_*^\fA(\fN)$$ is defined by
         $$K_{v}=\Gamma_*\lb\fN\cO_{F_v} \rb \text{ for } v\not
      \in S_B
      ,$$
      and  \begin{equation}\label{KvA}
K_{v}=1+\fm_{B_{v }}^ {\ord_v(\fA)-1} \text{ for }v\in S_B.
 \end{equation} 
  If $\fA$ is square-free, i.e., $\fA$  is the discriminant of $B$,  then $K$ is the closure of the so-called Eichler order of level $\fN$ (see for example \cite{Cla,Voi})  in $B^\times(\BA_\rf)$.   
  In this case, we also  let  $$ X_*^B(\fN) =X_*^\fA(\fN), Y_*^B(\fN) =Y_*^\fA(\fN).$$
    If $B = \RM_{2,\BQ}$,   then we have
 the classical modular curves.

We have some remarks about $Y_*^\fA(\fN) $. First, in the more standard terminology, it is a Shimura curve for the reductive group $PB^\times = B^\times/F^\times $ (while $X_*^\fA(\fN) $ is for $B^\times$). 
Second,  we have a natural map $X_*^\fA(\fN) \to Y_*^\fA(\fN) $ and 
$$  Y_*^\fA(\fN) =X_*^\fA(\fN) /
 \BA_\rf^\times
 $$  where  $\BA_\rf^\times$  acts on any $X_K$  by right multiplication via \eqref{Xcomplex}, and the action factors through a finite quotient. 
Finally, by definition, it is easy to see that $$Y_0^\fA (\fN)=Y_1^\fA(\fN).$$

 Another class of classical examples are Atkin-Lehner quotients, which will be discussed in the next subsection.

The connected components of $X_{K,\BC}$ are exactly  the Shimura curves over $\BC$ defined in  \Cref{1.3}. Let 
us study some examples.

Let  $\wh \cO\subset \BA_\rf$ the ring of integral adeles.
For an ideal $\fm$ of $\cO$, let  $\wh\cO^\times(\fm)=(1+\fm \wh \cO)\cap \wh \cO^\times$. Then
ray class group, resp. narrow ray class group of $F$ associated to the modulus $\fm$ is $$\Cl_\fm\cong F^\times\bsl \BA_\rf^\times/ \wh\cO^\times(\fm),\text{ resp. } \Cl_\fm^+\cong F^+\bsl \BA_\rf^\times/\wh\cO^\times(\fm).$$

         For $\fA$ as above, let     $$\fm=\prod_{\fp|\fA}\fp^{[\frac{\ord_\fp(\fA)}{2}] }.$$
For $*\in \{0,1\}$ (but not $\emptyset$), a direct computation shows that the zeroth fundamental groups  
  \begin{align}\label{eq:pi0} 
  \pi_0\lb X^\fA_*(\fN)_\BC\rb\cong \Cl_\fm^+,\  \pi_0\lb Y^\fA_*(\fN)_\BC\rb\cong\Cl_\fm^+/ \lb \Cl_\fm^+\rb^2.
     \end{align}
   
   Below, we only need $*=0$.  Let $K=K_0^\fA(\fN)$.
   Let $X^o$ be a connected component of $X_0^\fA(\fN)_\BC$ and $Y^o$ its image in $Y_0^\fA(\fN)_\BC$ (which is also a connected component). 
   \begin{lem}\label{lem:degXY}
 The degree of $X^o\to Y^o$  divides the cardinality  $|\Cl_\fm |$. 
 \end{lem}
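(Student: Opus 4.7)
The plan is to analyze the natural right action of $\BA_\rf^\times$ on $X_K$ through the complex uniformization $X_K(\BC)\cong B^+ \bsl \BH \times B^\times(\BA_\rf)/K$, where $g$ sends $[h,b]$ to $[h,bg]$, and to show that this action factors through a \emph{faithful} action of the ray class group $\Cl_\fm = \BA_\rf^\times/F^\times \wh\cO^\times(\fm)$. Granted this, if $H\leq \Cl_\fm$ is the setwise stabilizer of the component $X^o$, then $Y^o$ is the quotient of $X^o$ by the image of $H$ in $\Aut(X^o)$, so $\deg(X^o \to Y^o)$ divides $|H|$, which in turn divides $|\Cl_\fm|$ by Lagrange.

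The essential step is to identify the kernel of the $\BA_\rf^\times$-action on $X_K$ as $F^\times \wh\cO^\times(\fm)$. For the inclusion $\supseteq$, I will first verify place-by-place that $\BA_\rf^\times\cap K = \wh\cO^\times(\fm)$: at $v\notin S_B$ the scalars in $\Gamma_0(\fN\cO_{F_v})$ are $\cO_{F_v}^\times$, while at $v\in S_B$ with $n:=\ord_v(\fA)$ one has $(1+\fm_{B_v}^{n-1})\cap F_v^\times = 1+\fm_{F_v}^{[n/2]}$ (using a uniformizer $\varpi_{B_v}$ with $\varpi_{B_v}^2$ a uniformizer of $F_v$, together with the ramification identity $\fm_{B_v}^{2k}\cap F_v = \fm_{F_v}^k$). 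This subgroup is absorbed into $K$ on the right and so acts trivially. The second, crucial observation is that $F^\times\subset B^+$, since $\nrd(r)=r^2$ is totally positive for every $r\in F^\times$; taking $\gamma=r$ and $k=1$ in the equivalence defining the double coset, and using that scalars fix every $h\in\BH$ under the M\"obius action, shows that $F^\times$ acts trivially on $X_K$.

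For the reverse inclusion I will pick a generic $h\in\BH$ whose $B^+$-stabilizer is exactly $F^\times$ (possible because $B^\times/F^\times$ is countable while each of its non-trivial elements fixes only finitely many points in $\BH$). If some $g\in\BA_\rf^\times$ acts trivially, then specializing $[h,bg]=[h,b]$ at this $h$ and arbitrary $b$ forces $g=\gamma k$ with $\gamma\in F^\times$ and $k = \gamma^{-1}g\in\BA_\rf^\times\cap K=\wh\cO^\times(\fm)$, so $g\in F^\times\wh\cO^\times(\fm)$. Hence the kernel is exactly $F^\times\wh\cO^\times(\fm)$, so $\Cl_\fm$ acts faithfully on $X_K$ and the lemma follows. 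The only subtle step is this last one: recognizing that $F^\times$, rather than merely $F^+$, already lies in $B^+$ (so that it contributes in full to the generic $B^+$-stabilizer of $h$) is what makes the bound $|\Cl_\fm|$ work, as opposed to the a priori larger narrow class number $|\Cl_\fm^+|$.
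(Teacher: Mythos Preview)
Your proof is correct, but it takes a somewhat different route from the paper and does more than is needed.

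The paper argues directly at the level of Fuchsian groups: it identifies $X^o\to Y^o$ (outside cusps) with
\[
(B^+\cap K)\backslash\BH \longrightarrow (B^+\cap K\BA_\rf^\times)\backslash\BH,
\]
so the degree equals $\bigl|(B^+\cap K\BA_\rf^\times)/(B^+\cap K)F^\times\bigr|$, and then observes that this group injects into $(K\BA_\rf^\times)/(KF^\times)\cong \BA_\rf^\times/\wh\cO^\times(\fm)F^\times=\Cl_\fm$. Your approach instead lets $\BA_\rf^\times$ act on all of $X_K$ and bounds the degree by the order of the setwise stabilizer of $X^o$ in $\Cl_\fm$. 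Once one unwinds the stabilizer of the identity component, the two computations are the same group index viewed from opposite sides; the paper's version is shorter because it never needs to discuss the action on $\pi_0$ or on points of $\BH$ explicitly.

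One remark: your ``reverse inclusion'' step (the faithfulness of the $\Cl_\fm$-action, via a generic $h\in\BH$) is unnecessary for the lemma. As soon as you know $F^\times\wh\cO^\times(\fm)$ acts trivially, the action factors through $\Cl_\fm$, and then $\deg(X^o\to Y^o)$ equals the order of the \emph{image} of the stabilizer $H$ in $\Aut(X^o)$, which divides $|H|$, which divides $|\Cl_\fm|$. Faithfulness would only be needed if you wanted equality somewhere, not divisibility. The key observation you single out---that $F^\times\subset B^+$ because $\nrd(r)=r^2$ is totally positive, so the bound is $|\Cl_\fm|$ rather than $|\Cl_\fm^+|$---is exactly what makes the paper's injection land in $(K\BA_\rf^\times)/(KF^\times)$ rather than $(K\BA_\rf^\times)/(KF^+)$.
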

  \begin{proof}  Outside cusps,
  we may take $X^o\to Y^o$  to be
$$(B^+\cap K)\bsl \BH\to (B^+\cap (K\BA_\rf^\times))\bsl \BH.$$
So the degree is the cardinality of  
$(B^+\cap (K\BA_\rf^\times))/(B^+\cap K)F^\times$, which is a subgroup of $$   (K\BA_\rf^\times)/ ( KF^\times)\cong  \BA_\rf^\times/ ( KF^\times\cap \BA_\rf^\times)=\BA_\rf^\times/\lb \wh \cO^\times(\fm) F^\times\rb\cong \Cl_\fm.$$
   \end{proof}
  \begin{cor}\label{lem:YKXK} If $|\Cl_\fm |=1$, then $X_0^\fA(\fN)\to    Y_0^\fA(\fN)$  is an isomorphism.
 \end{cor}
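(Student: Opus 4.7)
My plan is to combine Lemma \ref{lem:degXY} with the component-count formula \eqref{eq:pi0}. Lemma \ref{lem:degXY} immediately yields that under the hypothesis $|\Cl_\fm|=1$ every component map $X^o \to Y^o$ has degree $1$, hence is an isomorphism of smooth projective curves; so the only remaining task is to show that the induced map on connected components $\pi_0\lb X_0^\fA(\fN)_\BC\rb \to \pi_0\lb Y_0^\fA(\fN)_\BC\rb$ is a bijection. Once both these hold, $X_0^\fA(\fN)_\BC \to Y_0^\fA(\fN)_\BC$ is an isomorphism, and since both sides are smooth projective curves over $F$ and the morphism is defined over $F$, the isomorphism descends to $F$.

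By \eqref{eq:pi0}, the induced map on $\pi_0$ identifies with the natural quotient $q \colon \Cl_\fm^+ \to \Cl_\fm^+/\lb\Cl_\fm^+\rb^2$. The crux is to observe that, under $|\Cl_\fm|=1$, the narrow ray class group $\Cl_\fm^+$ is itself an elementary abelian $2$-group, forcing $\lb\Cl_\fm^+\rb^2 = \{1\}$ and hence $q$ to be the identity. Indeed, the surjection $\Cl_\fm^+ \twoheadrightarrow \Cl_\fm$ induced by the inclusion $F^+ \subset F^\times$ has kernel equal to the image of $F^\times$ in $\Cl_\fm^+$, which is a quotient of $F^\times/\lb F^+ \cdot \lb F^\times \cap \wh\cO^\times(\fm)\rb\rb$, and in particular of $F^\times/F^+$. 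Since $F$ is totally real, the signature map identifies $F^\times/F^+$ with $\{\pm 1\}^r$ (where $r$ is the number of real places of $F$), so this kernel has exponent $2$. When $|\Cl_\fm|=1$ the kernel equals $\Cl_\fm^+$, which therefore has exponent $2$.

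Putting things together: $\pi_0(X_0^\fA(\fN)_\BC) \to \pi_0(Y_0^\fA(\fN)_\BC)$ is a surjection between finite sets of equal cardinality, hence a bijection; combined with the componentwise isomorphism from step one, this gives the desired global isomorphism. The only step that is not completely formal is the elementary class-group computation of the previous paragraph identifying $\Cl_\fm^+$ as having exponent $2$ when $|\Cl_\fm|=1$; everything else follows mechanically from Lemma \ref{lem:degXY} and \eqref{eq:pi0}, together with the standard fact that the reduced norm of a central element $g \in \BA_\rf^\times \subset B^\times(\BA_\rf)$ equals $g^2$, which is what guarantees that the action of $\BA_\rf^\times$ on the components collapses precisely along $\lb\Cl_\fm^+\rb^2$.
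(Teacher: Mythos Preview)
Your proof is correct and follows essentially the same route as the paper's own argument: use Lemma \ref{lem:degXY} to get the componentwise isomorphism, then use \eqref{eq:pi0} together with the fact that the kernel of $\Cl_\fm^+\to\Cl_\fm$ is $2$-torsion (so $\Cl_\fm^+$ itself is $2$-torsion when $|\Cl_\fm|=1$) to match the component counts. The paper states this last class-group fact in one line, whereas you spell out the identification of the kernel with a quotient of $F^\times/F^+\cong\{\pm1\}^r$; the extra remarks on descent to $F$ and on the reduced norm of central elements are fine but not needed for the argument.
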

   \begin{proof}          By \Cref{lem:degXY}, we only need  $|\pi_0\lb X_*(\fN)_\BC\rb|=|\pi_0\lb Y_*(\fN)_\BC\rb|. $
   But  the kernel of the natural homomorphism from the narrow class group to the   class group 
   is $2$-torsion, so the equality follows from \eqref{eq:pi0}.        
    \end{proof}
      \begin{rmk}  Here is another more conceptual argument for the isomorphism when either curve has  genus at least $2$.
                     The holomorphic differential $1$-forms on $X_0^\fA(\fN)_\BC$  can be realized as some automorphic forms  whose central characters  are characters of the   class group of $F$  associated to $\fm$. 
                      And the ones on $ Y_0^\fA(\fN)_\BC$  are those  whose central characters are trivial after pullback to $X_0^\fA(\fN)_\BC$.                       If the ray class group  is trivial, $ Y_0^\fA(\fN)_\BC$ has the same  space of  holomorphic differential forms with  $X_0^\fA(\fN)_\BC$  via pullback.   
  
  \end{rmk}
  Now we want to clarify the relation between connected components of $X_{K,\BC}$ and the Shimura curves over $\BC$ defined in
\Cref  {SCC} for later use, also because they are often confused in literature.  
Recall that  $B^1$ be the group of elements of norm 1 in $B$. 
           Let   $$ X^1=(B^1\cap K)\bsl \BH^* \text{ resp. }  Y^1=(B^1\cap K\BA_\rf^\times)\bsl \BH^*.$$    
   Let $X^o$ be a connected component of $X_{K,\BC}$ and $Y^o$ its image in $Y_{K,\BC}$ (which is also a connected component). 
Still let $K=K_0^\fA(\fN)$.
Assume that $\fA$ is square-free (i.e., $\fA$ is the discriminant of $B$).
 Then  $\Cl_\fm$ becomes the usual class group $\Cl$ of $F$.
  \begin{lem}\label{lem:degX1X}
 The degree of the  natural map $X^1\to X^o$  divides $| \cO^{\times,+} /\lb  \cO^\times\rb^2|$. 
 
  
 \end{lem}
  \begin{proof}  The degree is the cardinality of 
$G: = (B^+\cap K)/\lb (B^1\cap K)(F^\times\cap K)\rb.$
Note that  $F^\times\cap K=\cO^\times$.
By the reduced norm map on $B^\times$,
$G$ is a subgroup of    $  \cO^{\times,+} /\lb  \cO^\times\rb^2.$
  \end{proof}
 
\begin{lem}[{\cite[Corollary 2.2., Proposition 2.4]{EMP}}]\label{lemclassn}
   Assume  \begin{itemize}
    \item $F$ has odd narrow class number, or
    \item $F$ is a subfield of $\BQ(\zeta_p)$  for a prime number $p$ and has odd  class number.
  \end{itemize}
  Then   $  \cO^{\times,+} =\lb  \cO^\times\rb^2.$ 
\end{lem} 

  \begin{cor} Assume  \begin{itemize}
    \item $F$ has   narrow class number 1, or
    \item $F$ is a subfield of $\BQ(\zeta_p)$  for a prime number $p$ and has  class number 1.
  \end{itemize}
      Then
  $X^1, X^o$,  $Y^1$ and $Y^o$ are all isomorphic. 
    \end{cor}
          
     \subsection{Atkin--Lehner quotients}\label{Atkin--Lehner quotients I}

 Let  us define the  Atkin--Lehner involutions on $ Y_0^B(\fN)$. 
 
 Let $ K = K_0^\fA(\fN)$.
 First, at $v\in S_B$, $B_v^\times$ normalizes $K_v$ so that $B_v^\times/K_v$  acts on
  $X_K$ by  the right multiplication isomorphism $\rho(g) :  X_{ K}=  X_{g Kg^{-1}}\to X_{  K}$ \cite[3.1.4]{YZZ}, 
for $g\in B_v^\times/K_v$. 
The action factors through a finite quotient as $\Aut(X_K)$ is finite.
The same action on  $ Y_0^\fA(\fN) = X_{K\BA_f^\times} $   factors through $B_v^\times/F_{v}^\times K_v$. If moreover, $\ord_v\fA=1 ,$  then
   \begin{equation}\label{DZ/2'}   B_v^\times/  K_v \cong \BZ\text{ resp. }B_v^\times/F_{v}^\times K_v \cong \BZ/2.
    \end{equation}  
In this case,
let $w_v$ be the generator of $\BZ/2$, which is  the Atkin--Lehner involution of $X_{K\BA_f^\times} $  at $v$.
    Next, 
      we consider Atkin--Lehner involutions  over the finite set  $S_\fN$   of finite places 
       of $F$  dividing $\fN$. For $v\in S_\fN$, 
            let 
             \begin{equation}\label{ALA1}
             g_v\in \begin{bmatrix}
0 & 1 \\
\fm_{F_v}^{m-1}\bsl \fm_{F_v}^{m} & 0.
\end{bmatrix},
\end{equation}
 where $m=\ord_v\fN$. 
 Then 
  $K_v= g_vK_vg_v^{-1}$.  
Let $w_v $ be
 the right multiplication isomorphism $\rho(g) :  X_{K\BA_f^\times}=  X_{gK\BA_f^\times g^{-1}}\to X_{K\BA_f^\times}$, which has order $2$ as $g^2\in F_v^\times$. This is   the Atkin--Lehner involution of $X_{K\BA_f^\times} $ at $v$.    
 Put them together, if $\fA$ is square-free ((i.e., $\fA$ is the discriminant of $B$), we have the set 
 $$\{w_v:v\in S_B\coprod S_\fN\}$$  of  Atkin--Lehner involutions on $ Y_0^B(\fN)$.
In fact, if $F=\BQ $, this coincides with the classical definition (see for example \cite{Cla,Ogg,Ogg1}) via \Cref{lem:YKXK}.

  Let $\fa$ be  an ideal of $\cO_F$ coprime to  all $v\in S_B$.  Let $S_\fa=\{v:v|\fa\}$.
For an ideal $\fN$ of $\cO_F$ coprime to $\fa$ and all $v\in S_B$, consider   the Atkin--Lehner quotients  
$$
Q^B_0(\fN)_{\fa}:= Y^B_0( \fa\fN)/\pair{w_v:v\in S_B\coprod S_\fa}.
$$  
Note that $Q^B_0(\fN)_{\fa}=X_K$  for $K\subset PB^\times(\BA_\rf)$  as follows: 
        $           K_{v}= {B_{v }}^\times$ for $v\in S_B,
$  $K_{v}=F_v^\times \Gamma_0\lb\fN\cO_{F_v}\rb <PB^\times_v=\PGL_2(F_v)$ for $ v\not
      \in S_B\cup S_\fa
$ and   finally
             \begin{equation*} K_{v}=F_v^\times \pair{\Gamma_0\lb\fN\cO_{F_v}\rb, g_v}\text{ for }   v\in
   S_\fa     \end{equation*}
      where $g_v$ is as in \eqref{ALA1} with $ m=\ord_v\fa$. 
       In particular, if $\ord_v\fa=1$, $K_v$ is the ramified maximal compact subgroup of   $PB^\times_v=\PGL_2(F_v)$, one of the only two 
maximal compact subgroups of   $PB^\times_v$,  with the other being $F_v^\times\GL_2(\cO_{F_v})$, see \eqref{cK}.
   Summarize  the  above discussion in the following lemma.
\begin{lem}\label{Kmax}
 For a maximal compact-modulo-center  subgroup $K$ of  $B^\times(\BA_\rf)$,  up to conjugation in $B^\times(\BA_\rf)$,  
  $X_K=Q^B_0(\cO_F)_{\fa}$ for some $\fa$ square-free.
   \end{lem}

 Now we consider gonality and genus of the Atkin--Lehner quotients.       
Let $Q^o$ be a connected component  of $Q^B_0(\cO_F)_{\fa,\BC}$. It is  the image  of a connected component  $Y^o$ of $ Y^B_0(\cO_F)_{\fa,\BC}$.  
Then  by Lemma \ref{lem:degXY} and Lemma \ref{lem:degX1X}, we have
\begin{align*} 
\gn(Q^o)\geq\frac{\gn(Y^o)}{2^{|S_B|+|S_\fN|}}\geq \frac{ 1}{| \cO^{\times,+} /\lb  \cO^\times\rb^2||\Cl|}\frac{\gn(X^1)}{2^{|S_B|+|S_\fN|}},
   \end{align*}
   where $X^1$ is defined   above Lemma \ref{lem:degXY} (with $\fN$ replaced by $\fa$).

\begin{prop} \label{lem:Qo1}
Let $F$ be a totally real number field. For $M>0$, the set of pairs $(B,\fa)$ as above such that 
$  {\gn(Q^o)} <M$  is contained in a  computable finite set.
 \end{prop}
 \begin{proof}
Let $\lambda$ be as above Proof of \Cref{thm:gn}.
By the argument in \cite{Abr}, we have 
\begin{equation}\label{Xgamma}
  \gn(X_\Gamma)\geq \frac{\lambda}{2}(g(X_\Gamma)-1),
\end{equation}
 where $g(X_\Gamma)$ is the genus of $X_\Gamma$.
  In particular, for any choice of $F,B,\fa$, we have
  $$
  \gn(X^1)\geq \frac{\lambda}{2}(g(X^1)-1).
  $$
 
  Zograf proved a lower bound on $g(X^1)$ in terms of  Laplacian eigenvalue and the volume of 
$  X^1$ which was computed by Shimizu, so that we have \cite[(3)]{Voi}  
 $$g(X^1)\geq \lambda \frac{D_F^{3/2}}{(2\pi)^{2[F:\BQ]}}\zeta_F(2)\Phi(B)\Psi(\fa)-1.$$
 Here   $D_F$   denotes the absolute discriminant  of $F$, $\zeta_F$ is the Dedekind zeta function of $F$,
 and
 $$\Phi(B)=\prod_{\fp\in S_B}\lb \Nm(\fp)-1\rb,$$
 $$\Psi(\fa)=\Nm(\fa)\prod_{\fp\in S_\fa}\lb 1+1/\Nm(\fp)\rb$$
 where $\Nm(\fa)=|\cO/\fa|$ is the ideal norm.   
 
 Combining the above inequities, we have
 \begin{align*} 
\gn(Q^o)\geq  c_1 \frac{  c_2 \Phi(B)\Psi(\fa)-2}{2^{|S_B|+|S_\fa|}},
   \end{align*}
   where $c_1,c_2$  are effective positive constants only depending on $F$.
As 
   \begin{align*} 
  \frac{\Phi(B)}{2^{|S_B|}}   \geq \frac{1}{ \prod_{\fp|2, \fp\in S_B}2} \frac{1}{ \prod_{\fp|3, \fp\in S_B}1} \frac{\prod_{\fp\nmid 6, \fp\in S_B}4}{ \prod_{\fp\nmid 6, \fp\in S_B}2} 
 \geq  \frac{1 }{2^{[F:\BQ]}} 2 ^{|S_B|-2[F:\BQ]}
   \end{align*}
  and 
   \begin{align*} 
  \frac{\Psi(\fa)}{2^{|S_\fa|}}   \geq    \frac{3 }{2}^{|S_\fa|},
   \end{align*} 
  for $M>0$, if  $\gn(Q^o)<M$, the possible cardinalities   $|S_B|, |S_\fa|$ 
   lie in a computable range.
  Similarly, one can deduce that  the possible norms of  elements in $S_B ,S_\fa$  lie in a computable range.
 The  lemma is  proved.
 \end{proof}

\begin{rmk} \label {eqgon}
If there is an explicit Sturm bound to determine a HIlbert modular form by a finite subset of all Fourier coefficient,
then the equation  $Q^o$ is computable  when $B$ is split at all finite places. Thus the gonality of  $Q^o$ is computable.
So the set of pairs $(B,\fa)$   such that 
$  {\gn(Q^o)} <M$ is actually computable.
 \end{rmk}
      \begin{prop}\label{lem:Bap}  
The set of quadruplets  $\{(F,B,\fa,\fN:g\lb Q^B_0(\fN)_{\fa}\rb= 0\}$ (here $\fa$ is not necessarily square-free) 
 is   a  computable finite set.

  \end{prop}  
     \begin{proof} 
      If $g\lb Q^B_0(\fN)_{\fa}\rb= 0$, then $g\lb Q^B_0(\cO_F)_{\fa\fN}\rb= 0$
  as it is dominated by $ Q^B_0(\fR)_{\fa}$. 
       By \Cref{lem:Qo1}, given $F$, a set  of  possible pairs $(B,\fb)$ such  that  $g\lb Q^B_0(\cO_F)_{\fb}\rb= 0$           is a  computable finite set.
A  set of  possible $F$ can be   determined from the the proof of
  \Cref{thm:gn}\footnote{And the algorithm is relatively more efficient compared to the whole   \Cref{thm:gn}.}.   
Since the genus of $Q^B_0(\fN)_{\fa}$ is computable, the final set in the lemma is computable.
  \end{proof}   
 
A priori, the proof of \Cref{lem:Bap} gives an algorithm to compute  this set  $\{(B,\fa,\fp:g\lb Q^B_0(\fp)_{\fa}\rb= 0\}$. However, it may be too large for   computation by hand. 
Instead, we  can take advantage of  the  classification of $(B,\fa)$ such that $g\lb Q^B_0(1)_{\fa}\rb=0$ (if available).  This classification is useful by   \Cref{lem:AL3} below. 
Let $$\fD=\prod_{p\in S_B}\fp,$$  which is the discriminant of $B$ and determines $B$ by the Hasse principle  for quaternion algebras.

       \begin{lem}\label{lem:AL3}

  \begin{enumerate}
  Consider the following conditions:
\item 
  $g\lb Q^B_0(\fN)_{\fa}\rb=0$,
  \item
   $g\lb Q^B_0(1)_{\fa}\rb=0$,  
   \item
   $g\lb Q^B_0(1)_{\fa\fN}\rb=0$,
    \item
    $g\lb Q^B_0(\fN)_{\fb}\rb=0$ for every    $\fb$ such that $\fb |\fa$ and $\fb\neq \fa$,
    
   \item   there is no Hilbert newform   of level $\fD\fN\fa$ with trivial central character whose Atkin--Lehner sign is $-1$ at every prime $\fp|\fD$, and $1$ 
  at every prime $\fp|\fa$.

  \end{enumerate} 
 
  We have (1)$\Rightarrow$(2)(3)(4)(5) (here $\fa$ is  not necessarily square-free). If $\fN$ is a prime and $\fa$ is square-free, then (2)+(4)+(5)$\Rightarrow$(1).

  \end{lem}

   \begin{proof} 
First, by the natural dominant morphisms $ Q^B_0(\fN)_{\fa}\to  Q^B_0(1)_{\fa} $, 
 and 
   $Q^B_0(\fN)_{\fa}\to Q^B_0(1)_{\fa\fN}$, (1)$\Rightarrow$(2)(3).
   
 Second, let us show that  (1)$\Rightarrow$(4). Let $S$ be the set of primes $\fp$ such that $\ord_{\fp}\fa=\ord_{\fp}\fb$, we prove an even stronger statement: the genus of 
  $  Y_0(\fN\fb)/\pair{w_{\fp}:p\in S}$ is 0. Assume that statement is wrong. Then there is
  a representation  $\Pi$  appearing in $Y_0(\fN\fb)$ such that  the Atkin--Lehner sign  of $\Pi_{\fp},\fp\in S,$ is $1$. 
 Note that $\Pi$ also appears in $Y_0(\fN\fa)$.
For $p\not\in S$, by \Cref {levelup}, the $1$-eigenspace of $w_{\fp}$ on $\Pi_{\fp}^{\Gamma \lb \fN\BZ_{\fp}\rb} $ is nonzero. 
So $g(Q^B_0{\fN }_{\fa})\neq0$, a contradiction.

Third,   (1)$\Rightarrow$(5). Indeed, if there is such a newform as in (5),  the Jacquet--Langlands correspondence to $B^\times$ of its corresponding automorphic representation appears in $Q^B_0(\fN)_{\fa}$ (see \Cref{minimal} (2) and \Cref{JLpm1}), a contradiction.

Finally, let us consider the other direction (2)+(4)+(5)$\Rightarrow$(1).
Assume there is  a representation  $\Pi$  appearing in $Q^B_0(\fN)_{\fa}$ of level $\fc$. Then $\fD|\fc$ and   $\fc|\fD\fN\fa$.
  So by (5), $\fc \neq \fD\fN\fa$ (see \Cref{minimal} (2) and \Cref{JLpm1}). By (2), $\fc\nmid \fD\fa$.  
  As $\fN$ is a prime and $\fc \neq \fD\fN\fa$,
  $\fc=\fD\fN\fb$ for some   $\fb$ such that $\fb |\fa$ and $\fb\neq \fa$.  
 For every prime $\fp|\fb$, since $\ord_{\fp}\fb=\ord_{\fp}\fa$ (as $\fa$ is square-free),  the Atkin--Lehner sign  of $\Pi_{\fp}$ is $1$.
 Thus   $\Pi$  appears in $Q^B_0(\fN)_{\fb}$.
 This is impossible by (4). 
   \end{proof}
  
    Let
$F=\BQ$. 
Let us compute this set $\{(B,\fa,\fp:g\lb Q^B_0(\fp)_{\fa}\rb= 0\}$.
For an ideal, use its positive generator to denote it instead. Let $\fD=(D),\fp=(p)  ,\fa=(a))$ where $D,p,a>0$.
  Then $D,p,a$ are coprime to each other.

\begin{prop} \label{prop:ALg=0}   
 
Assume that $a$ is square-free and $D>1$. Then   the genus $g(Q^B_0(p )_a)=0$ for a prime number $p$ if and only if $ a=1$ and one of the following  holds:

$$D=6, \quad p=5,7, 13;
$$
$$D=10, \quad p=3,7; 
$$
$$D=14,\quad  p=3,5;  
$$
$$(D,p)= (15,2),
      (21,2),
           (26,3),
      (35,2),
      (39,2).
$$

\end{prop}
\begin{proof} We first consider the case $a=1$. 
By \Cref{lem:AL3}, $g(Q^B_0(p )_a)=0$  then implies that $g(Q^B_0(1 )_p)=0$  and $g(Q^B_0(1 )_1)=0$. 
 Such $(D,p)$'s  are classified in  \cite[Proposition 4.1]{Rie}. 
  Using \textsf{LMFDB}, we find that only $(D,p)$'s as in the proposition satisfy \Cref{lem:AL3} (5) 
   The proposition follows from \Cref{lem:AL3}.

  In general, by  \Cref{lem:AL3}, $g(Q^B_0(p )_a)=0$,
  implies $g(Q^B_0(p )_1)=0$.  So we only need to check $(D,p)$'s as in the proposition. 
 By  \Cref{lem:AL3}, it also  implies   $g(Q^B_0(1 )_{pa})=0$.
  Now the  proposition  follows from  the  classification of $(B,\fa)$ such that $g\lb Q^B_0(1)_{\fa}\rb=0$, which is given in \cite[Proposition 4.1]{Rie}).\end{proof}  

  The analog of the proposition for $D=1$ is not needed.

   \subsection{Auto-critical  curves}\label{Good  curves}

An    irreducible infinite dimensional  representation  $\Pi$
of 
$B^\times(\BA_\rf)$ is called automorphic and  holomorphic of weight $2$  if 
its Jacquet--Langlands correspondence   to $\GL_{2,F}$ is the finite part of a   cuspidal automorphic representation  holomorphic of weight $2$. 
We say that $\Pi$ \textit{appears}  in $X_K$ if the subspace of $K$-invariants $\Pi^K\neq \emptyset$. 
Then by \cite[Theorem 3.7]{YZZ},
\begin{equation}\label{eq:H10}
 H^{1,0}(X_{K,\BC})\cong \bigoplus\Pi^K.
\end{equation}
 the multiplicity-free    direct sum over all   representations 
 of $B^\times(\BA_\rf)$     \textit{appearing} in $X_K$. 

We say that the Shimura curve $X_K$  is auto-critical if    for any  triple $(\Pi_1,\Pi_2,\Pi_3) $ of 
  irreducible   representations  of 
$B^\times(\BA_\rf)$  \textit{appearing in} $X_K$,  the space of $B^\times(\BA_\rf)$-invariant linear forms
 $$  \Hom_{B^\times(\BA_\rf)}\lb \Pi_{1}\otimes \Pi_{2}\otimes \Pi_{3},\BC\rb= 0.$$ 
 In this case, we also say that a connected component of $X_{K,\BC}$ is auto-critical.
 Let $\Pi_{K}$ be the direct sum of 
 irreducible   representations  of 
$B^\times(\BA_\rf)$   appearing in $X_K$. Then 
$X_K$ is auto-critical if and only if
 $  \Pi^{\otimes 3}$   has no nonzero  $B^\times(\BA_\rf)$-invariant linear forms.
 This is the formulation in the Introduction.
  
  The notion ``auto-critical Shimura curve" is defined regardless of the genus. In particular, a Shimura curve of genus 0 is always auto-critical.

The notion ``auto-critical Shimura curve" is useful  by the following.
   \begin{lem}[{\cite[Corollary 3.3.4]{QZ2}}]\label{vancor2} 
Auto-critical Shimura curves of genus at least 2 are critical. 
\end{lem} 
\begin{rmk}\label{weakauto-criticalnes}
  Actually, we may weaken the definition of auto-criticalness by only requiring  $B^\times(\BA_\rf)$-invariant linear forms to vanish on relevant test vectors. 
  In this case, the Shimura curves are still critical. 
  This shall be studied in a subsequent work.  
   \end{rmk}
 
    We reformulate \Cref{thm:finiteintro} as follows.
   \begin{thm} \label{thm:finite}
    There are only finitely many  almost definite   quaternion algebras   $B$ over all totally real number fields,  
     such that the Shimura curve $X_K$  is auto-critical  for some open compact-modulo-center subgroup $K \subset B^\times(\BA_\rf)$.
Moreover, this finite set is contained in a  computable 
 set of  explicit  almost-definite quaternion algebras over totally real fields.
    \end{thm} 
      \Cref{thm:finite}    will be proved in the end of this section.

   By definition, if representations appearing in  two  Shimura curves  are comparable, we may deduce the auto-criticalness of one  from another.
 The following obvious lemma will be very helpful later.
 \begin{lem}\label{lem:auto-critical} 
A Shimura curve   that is dominated by a auto-critical Shimura curve under a morphism between Shimura curves is itself auto-critical.
\end{lem}

Let us also define \textit{subhyperelliptic} curves to be  curves of gonality at most 2, that is,  admitting a degree 2 morphism to $\BP^1$. This is a more flexible notion than hyperellipticity for our convenience. 
Then \Cref{lem:auto-critical} is an analog of the fact that a curve dominated by a subhyperelliptic curve is itself subhyperelliptic (as well as 
the fact that a curve dominated by a  critical curve  
   itself  is critical   \cite[Proposition 2.3.3]{QZ1}).
  The sets of subhyperelliptic and auto-critical curves do not include each other.
  See \Cref{thm:auto-critical} (1). 
However, sometimes subhyperellipticity could be useful to check auto-criticalness, as we now discuss.

Let $\Pi$ be an  irreducible   representation   of $B^\times(\BA_\rf)$ 
   appearing in $Y_0^\fA(\fN)$. Then for $v\in S_B$, $\Pi_v=\pm1$ of $B_v^\times/F_{v}^\times K_v \cong \BZ/2$.
For
    $v\nmid \fN$, $\Pi_v$  is an unramified principal series of $\GL_2(F_v)$
   or $\St\otimes \pm 1$ (the Steinberg representation $\St$  of  $\GL_2(F_v)$ and  its   unique unramified quadratic twist $\St\otimes -1$, see  \ref
    {Notions}).    
    
  \begin{lem}
\label {lem:hypauto-critical}
Let $S $ be a finite set of finite places $v$ of $F$ such that   $\ord_v\fA\fN=1$.
If   $g\lb Y_0^\fA(\fN)/\pair{w}\rb=0$, where    $w=\prod_{v\in S} w_v$, then $Y_0^\fA(\fN)$ is auto-critical.
 \end{lem}
  \begin{proof}
 Claim: for $\Pi$ appearing in $Y_0^\fA(\fN)=X_K/\BA_F^\times$, $\Pi_v=\St\otimes \pm 1$ for $v\in S\bsl S_B$. Indeed, otherwise, $\Pi_v$  is  unramified,
 so that by \Cref {levelup}, the $\pm1$-eigenspace of $w_v$ on $\Pi_v^{K_v}=\Pi_v^{\Gamma_0 \lb \fN\cO_{F_v}\rb} $ is nonzero.  
Thus the involution $w$ acting on
$\Pi^K$ has both $\pm1$-eigenvalues.  So $g\lb Y_0^\fA(\fN)/\pair{w}\rb\neq 0$,  a contradiction.

 As $\Pi_{i,v}=\pm 1$ for $v\in S\cap S_B$, 
by the claim, $\dim \Pi_v^{K_v}=1$ for $v\in S$. 
Let $e(\Pi_v^{K_v})$ be of the (eigen)value  of $w_v$   on the $1$-dimensional $\Pi_v^{K_v}$. 
Then $g\lb Y_0^\fA(\fN)/\pair{w}\rb=0$ which implies that  
$$-1=e(\Pi):=\prod_{v\in S}e(\Pi_v^{K_v}).$$  So for $\Pi_i,i=1,2,3,$ appearing in $Y_0(\fN)$,   we have $e(\Pi_1)e(\Pi_2)e(\Pi_3)=-1$. Then for some $v_0\in S$, we have 
$$-1= \prod_{i=1}^3e(\Pi_{i,v}^{K_v}).$$ 

If $v_0\in S_B$ so that $\Pi_{i,v}=\pm 1$, then clearly, the component at $v_0$ of
 $  \Pi_1\otimes \Pi_2\otimes \Pi_3$,  and thus   $  \Pi_1\otimes \Pi_2\otimes \Pi_3$,  has no nonzero  $B^\times(\BA_\rf)$-invariant linear forms.
  If $v_0\not\in S_B$, apply
\Cref{lem:ep}    (2). 
 \end{proof}

\begin{rmk}
 Without the condition that   $\ord_v\fA\fN=1$, $g\lb Y_0^\fA(\fN)/\pair{w}\rb=0$ still implies that  
$ Y_0^\fA(\fN)$ is critical if its geometrically connected components have genus at least 2, by hyperellipticity as we discussed in the Introduction. 
Such criticalness is actually  explained by \Cref{weakauto-criticalnes}.
 
   \end{rmk}
   

Now we prepare to prove \Cref{thm:finite}.    If $\fa$ is square-free and $\Pi $  appears in $Q^B_0(\fN)_{\fa}$, then $\Pi_{v}=1$  for $v\in S_B$, and 
  $\Pi_v$ is a(n unramified) principal series or $\St\otimes - 1$  (see \Cref{minimal} (2))  for $v\in S_\fa$.
  By   \Cref {lem:ep} (1) (2),  
        for any  triple $(\Pi_1,\Pi_2,\Pi_3) $ appearing in $C$,  we  have
         \begin{equation}\label{pNShAL}
   \Hom_{B_v^\times}\lb \Pi_{1,v}\otimes \Pi_{2,v}\otimes \Pi_{3,v},\BC\rb\neq 0
, \text{ for }v\nmid \fN.
 \end{equation} 
  In other words,  we can treat $S_B\cup S_\fa$ as a set of ``unramified" places.   
  By \Cref{lem:auto-critical} and \eqref{pNShAL}, we also have the following.
  \begin{lem}\label{lem:AL} 

   If  $\fa$ is square-free and $Q^B_0(\fN)_{\fa}$ is   auto-critical, then $ Q^B_0(\cO_F)_{\fa} $ has genus $0$.

       \end{lem}  
        Combining the last  lemma and \Cref{lem:Bap}, we have the following.
      \begin{prop} \label{thm:ALfinite}
 Consider the set of triples $(F,B,\fa)$   with   $\fa$ being square-free   such that  $Q^B_0(\fN)_{\fa}$ is auto-critical for some $\fN$. 
    This  set is     a  computable finite set.  $ need auto cric to be computable $algo GL2 actually.
       \end{prop}

  Now we can prove  \Cref{thm:finite}. 
  
     \begin{proof} [Proof of \Cref{thm:finite}]
By
  \Cref  {lem:auto-critical}, we only need to prove  \Cref{thm:finite} for   $Y_K$ with $K$ maximal.
   By  \Cref {Kmax},  \Cref{thm:finite} follows from \Cref{thm:ALfinite}.
                  \end{proof} 

 \section{Classification of auto-critical Shimura curves over $\BQ$} 
 
 We continue to use the notations in the last section. Besides, we add 
some more conventions.
For an ideal of $\BZ$, use its positive generator to denote it instead.   We want to classify auto-critical $X_*^A(N) =X_*^\fA(\fN)  $ with $\fA=(A),\fN = (N)$ and    $Y_*^A(N)$ for $*\in \{0,1,\emptyset\}$. Recall that $A$ is a product of all primes  in $S_B$ with positive powers.    If $A$ is  square-free, then
               $X_*^A(N)$ and    $Y_*^A(N)$  are  denoted by $X_* (N)$ and    $Y_* (N)$ to simplify the notations.  
                 For example,  if $B = \RM_{2,\BQ}$,      $ X_*(N)$ is a classical modular curve.

We first recall the necessary background on modular forms and automorphic representations relevant to determining the auto-criticalness of Shimura curves, as described in \Cref{newforms in the database LMFDB}. We also present examples of Shimura curves for which such computations are carried out.
In the remainder of this section, we classify the auto-critical Shimura curves over $\BQ$ with classical level structures, making use of these examples.

 \subsection{Algorithms and examples}\label{newforms in the database LMFDB}
We study irreducible infinite-dimensional representations $\Pi$ of $B^\times(\BA_\rf)$ appearing in Shimura curves over $\BQ$ via the Jacquet--Langlands correspondence to $\GL_{2,\BQ}$.
The cuspidal automorphic representations $\pi$ of $\GL_{2,\BQ}$ whose infinite component is the holomorphic discrete series of weight~2 are in bijection with newforms $f$ (in the sense of classical modular forms for congruence subgroups of $\SL_2(\BZ)$) of weight~2.
Let the level of $\pi$ be the level of $f$.
Then for a prime number $p$, the conductor of $\pi_p$ (see \Cref{Notions}) is exactly the $p$-exponent of the level.
Recall that $S_B$ is the finite set of finite places $v$ of $F$ such that $B_v := B(F_v)$ is a division quaternion algebra.
Then $\pi_\rf$ (the finite component of $\pi$) is the Jacquet--Langlands correspondence to $\GL_{2,\BQ}$ of an irreducible infinite-dimensional representation $\Pi$ of $B^\times(\BA_\rf)$ if and only if $\pi_v$ is a discrete series for every $v \in S_B$.
In this case, if $p \notin S_B$, then $\Pi_p = \pi_p$.
Let the level of $\Pi$ be the level of $\pi$, which is a reasonable notion by \eqref{JLcond}.
Note that every $p \in S_B$ divides this level.
The most important task is to determine whether a Shimura curve $X_0^A(N)$ is auto-critical by \Cref{lem:auto-critical}, since it is dominated by other curves $X_1^A(N)$ and $X^A(N)$.
Then $\Pi$ appears in $X_0^A(N)$ if and only if $\Pi$ has level dividing $AN$ (by \eqref{JLcond}).

\begin{prop}\label{globalalgo}
It is computable whether $X_0^A(N)$ is auto-critical.
\end{prop}

\begin{proof}
It is well known that the finite set of newforms of a given level $f$ is computable.  
By \cite{LW}, one can further compute the local components of all corresponding cuspidal automorphic representations $\pi$ at a given prime $p$.  
Since it is known a priori that $\Pi_p = \pi_p$ is a(n unramified) principal series if $p \nmid AN$,  
by \Cref{lem:ep}(1), we only need to compute 
\[
\Hom_{B_p^\times}\!\left( \Pi_{1,p} \otimes \Pi_{2,p} \otimes \Pi_{3,p}, \BC \right)
\]
for $p \mid AN$, where $\Pi_i$ are the representations appearing in $X_0^A(N)$.  
This is computable by \Cref{localalgo}.
\end{proof}

Given the progress on computing Hilbert newforms, \Cref{globalalgo} might be extended to Shimura curves over totally real fields.

The proof of \Cref{globalalgo} can be turned into an explicit algorithm using the computer algebra system \textsf{Sage} \cite{Sage}.  
Indeed, all steps in the proof have been implemented, except for \Cref{localalgo}.  
However, we find it more convenient to perform the computation ``by hand",  
using data from the \textsf{LMFDB} database \cite{LMFDB} of modular forms,  
and replacing \Cref{localalgo} with certain practical criteria from Appendix~\ref{Local root numbers} in most cases.  
This suffices to determine the \textit{finite set} of auto-critical Shimura curves of the forms $X_*^A(N)$ and $Y_*^A(N)$.  
We often start with $Y_0^A(N)$, which is dominated by $X_0^A(N)$.  
In this case, we simply require $\Pi$ and $\pi$ to have trivial central character.

  Now we  give a detailed description of our procedure to determine whether   $X_0^A(N)$ is auto-critical, mostly using \textsf{LMFDB} and ``by hand". 
  This enables us to compute some examples of Shimura curves that we will use later.  
  
  Let $\Pi_{i},i=1,2,3$ be  irreducible infinite dimensional  representations  of $B^\times(\BA_\rf)$  appearing in Shimura curves over $\BQ$. 
We compute $  \Hom_{B^\times(\BA_\rf)}\lb \Pi_{1}\otimes \Pi_{2}\otimes \Pi_{3},\BC\rb$. If it is nonzero,  $X_0^A(N)$ is not auto-critical and we are done.  In most examples we compute below, we can find such a triple easily.  
Otherwise, we  loop over all possible triple representations, and show that  $X_0^A(N)$ is  auto-critical

  We separate the computation of this $\Hom$ space
   into seven steps. 
  Let $\Pi$ be one of $\Pi_{i,p},i=1,2,3$. In the first four steps, we mainly determine $f, \pi$ such that $ \pi_\rf$  is 
the Jacquet--Langlands correspondence to $\GL_{2,
\BQ}$ of  $\Pi$ as above and 
$f$ is the corresponding newform.

First, 
 we call $f$ minimal if $\pi_p$ is minimal for all $p$. This is the equivalent to the  definition in \textsf{LMFDB}, and 
whether or not $f$  is minimal is displayed in \textsf{LMFDB}.
If $f$ is minimal, use \Cref  {minimal} to conclude  if the corresponding  representation $\pi_p$ at $p$ is a principal series, a special representation $\St_{\BQ_p}\otimes \chi$  where $\chi$ is an unramified  character of $\BQ_p^\times$, or a minimal supercuspidal representation.
If $f$ is not minimal, a twist of $f$ that is minimal is also provided  in \textsf{LMFDB}. 
It turns out that the only non-minimal representations we will  use  are ramified twist of $\St_{\BQ_p}$, but not supercuspidal representations. (The principal series are  dealt in the following second step without the minimality information.)

Second,   if $\pi_p$   is a principal series and  (so that $B_p$ is a matrix algebra), we   apply  \Cref {lem:ep}  (1) to conclude that  
$  \Hom_{B_p^\times}\lb \Pi_{1,p}\otimes \Pi_{2,p}\otimes \Pi_{3,p},\BC\rb\neq 0$

Third,
  if $\pi_p=\St_{\BQ_p}\otimes \chi$  where $\chi$ is an unramified,  by  \cite[Proposition 2.8]{LW},
\begin{equation}
\label{chiap}
\chi(p)=a_p(f),
\end{equation}
 the $p$-the Fourier coefficient of $f$.  This determines $\chi$. 
Moreover,  when $f$ has trivial central character, \textsf{LMFDB} also displays Atkin--Lehner sign of $f$ at $p$, which is  $-a_p(f)$ (see \Cref{minimal} (2)).

Fourth, if $\pi_p$ is minimal supercuspidal,    we  can use the computer algebra system  \textsf{Sage} \cite{Sage} to find the compact induction data (thanks to the algorithm in  \cite{LW}, and note the ``dual" in \cite[Theorem 4.6]{LW}), see \Cref{thm:type}.  However, this is 
only needed in a few examples. In most cases, conductors are enough. See the sixth and seventh steps below.
   
Fifth,  now we   start to discuss the computation of   $  \Hom_{B_p^\times}\lb \Pi_{1,p}\otimes \Pi_{2,p}\otimes \Pi_{3,p},\BC\rb$ without a principal series among  the  Jacquet--Langlands correspondence  of $\Pi_{i,p},i=1,2,3$ to $\GL_2$. 
If  the product of the  central  characters of $\Pi_{1,p},  \Pi_{2,p},  \Pi_{3,p}$ 
is not trivial, then    this space is apparently $0$. See \Cref
 {lem:cc}. (Actually,  most cases we compute below concern $Y_*^A(N)$, so that  all of $\Pi_{1,p},  \Pi_{2,p},  \Pi_{3,p}$ 
 have  trivial central  characters. Then this step is redundant.)

 Sixth, assume $B_p$ is split.  We have some convenient  criteria to apply. 
First, if two of $\Pi_{i,p},i=1,2,3$ take the form $\St_{\BQ_p}\otimes \chi$ (not necessarily minimal) of $\GL_{2}(\BQ_p)$, we  can apply \Cref {lem:ep}  (2).  
Second,   if   one of $\Pi_{i,p},i=1,2,3$ takes the form $\St_{\BQ_p}$, 
we  can       apply \Cref {lem:ep}  (3).
Third, if 
$\Pi_{i,p},i=1,2,3$  are discrete series of $\GL_{2}(\BQ_p)$ and 
 of  conductors  $\Cond_1\leq \Cond_2<\Cond_3$, we  can     apply \Cref {lem:ep} (4).  
     Fourth, in a few cases, we can transfer the problem of computing 
  $  \Hom_{B_p^\times}\lb \Pi_{1,p}\otimes \Pi_{2,p}\otimes \Pi_{3,p},\BC\rb $
 to the division quaternion algebra over $\BQ_p$ in the seventh step below. For  demonstrations,  see Example \ref
  {eg:6egs}  (1), and more generally  \Cref{cor:Dn} and \Cref{cor:Dn1}.  
    These  four cases will cover most of the examples below. 
 In  the rest of examples below,  we will being using \textsf{Sage}. There, $\Pi_{i,p} $'s will  all be minimal supercuspidal representations of of $\GL_{2}(\BQ_p)$ of the same conductor, we first find  the compact induction data using \textsf{Sage}, then apply \Cref{scusp}.  For a demonstration, see Example \ref
  {eg:6egs}  (2).

 Seventh,  assume $B_p$ is division.   Then $\Pi_p$ is essentially a representation of a finite quotient of $B_p^\times$. Similar to the sixth step, we have some criteria  
 in \Cref{Quaternion algebra} that will be enough to cover most example.  In the remaining few example,  we use \textsf{Sage} and character identities \eqref{character identity2} and \eqref{character identity1} to obtain information on $\Pi_{i,p} $'s. Then we work with the character table of the finite quotient of $B_p^\times$ to compute $  \Hom_{B_p^\times}\lb \Pi_{1,p}\otimes \Pi_{2,p}\otimes \Pi_{3,p},\BC\rb$.

 This finishes the  algorithm part of this subsection.

    Now   we list some examples of Shimura curves  that we succeed in determining their auto-criticalness and  we will use later.  The reader may skip them for now and come back for reference.
       The examples are arranged in a such way  that is handy for
     being used later and also convenient for the proofs in  \cite{Qiufinite}.
    The  computations  follow the above steps, and will appear in a separate document \cite{Qiufinite} (except for the demonstrative  Example \ref
  {eg:6egs}). In \cite{Qiufinite}, we also give more criteria for computing trilinear forms.
 
   Let $F=\BQ$, $B=\RM_{2,\BQ}$ and $C= X_0(N)=Y_0(N)$ until Example \ref{eg:2,5}.
       \begin{eg}  \label{eg1} 
      
  (1)    For $N=p^r= 2^6, 3^4,5^4 ,7^3$,  $C$ is not auto-critical.    
    
(2)  For $p^r=13^2$, $C$ is not auto-critical.       \end{eg} 
 
      \begin{eg}  \label{eg2}

 (1) For $N=p\cdot q=5\cdot13,7\cdot13$, $C$ is not auto-critical.  
 
  (2) For $N=p^2\cdot 13, p=2,3$, $C$ is not auto-critical. 
        
      (3) For $N=2\cdot 3 \cdot 13$, $C$ is not auto-critical.

      \end{eg}

    \begin{eg}  \label{eg3}

    (1)    
 For $N=p\cdot 7^2, p=2,3,5$, $C$ is not auto-critical.  
 
   (2)      For $N=2^3\cdot 7,3^2\cdot 7,5^2\cdot 7$, $C$ is not auto-critical.
   
       (3)    
 For $N=p\cdot q \cdot 7$ where $p\neq q\in \{2,3,5\}$ are distinct, $C$ is not auto-critical.      \end{eg} 
       
          \begin{eg}  \label{eg4}

    (1)    For $N=2^2\cdot 5^2 $, $C$ is not auto-critical.  
      
   (2)    
 For $N=3\cdot 5^2 $, $C$ is not auto-critical. 
 
 (3)   For $N= 5^3 $, $C$ is not auto-critical. 

        \end{eg}

     \begin{eg}  \label{eg5}

    (1)     For $N= 3^2\cdot 5$, $C$ is not auto-critical. 
    
    (2)      
 For $N= 2^4\cdot 5$, $C$ is not auto-critical.   
  
  (3)    
 For $N=2^2\cdot 3 \cdot 5$, $C$ is not auto-critical.
   \end{eg}

   \begin{eg}  \label{eg6}

 (1)      For $N=2^2\cdot 3^3$,  $C$ is not auto-critical.   
    
 (2)     For $N=2^4\cdot 3^2$,  $C$ is not auto-critical.

(3)  For $N=2^5 \cdot 3 $, $C$ is not auto-critical. 
     \end{eg} 
      
   \begin{eg}\label{eg:6egs}     
    Let  
 $N=  20, 24, 
27,   32, 
36,    
49.$ From \textsf{LMFDB},  we find that for 
$$
(N,p)=(20,2), (24,2), (32,2),(36,2), (27,3),(49,7),
$$
  there is only one $\Pi$ appearing  in $C$ and   $\Pi_p$ is minimal.  
  
  (1) For $(N,p)=(20,2), (24,2), (36,2),  \text{ (resp. }    (49,7) $), we  use \Cref{cor:Dn1} (resp. \Cref{cor:Dn} (2)) to conclude that $C$ is auto-critical (resp. not auto-critical). 
  
  (2) 
For $(N,p)=    (32,2),(27,3)$,  since $\Pi_p$ is minimal supercuspidal and  thus is the compact induction of  a   very cuspidal representation  $M$ of $\cK^{\ram}$, the ramified 
maximal compact subgroups of   $\PGL_2(\BQ_p)$
 (see \Cref{thm:type}), we   use the computer algebra system  \textsf{Sage} \cite{Sage} to find the compact induction data   and check   the trilinear form spaces in
 \Cref {scusp}.  
 We   find that  $\dim M=2$ and $ \begin{bmatrix}
0 & -1 \\
p & 0
\end{bmatrix}\in \cK^{\ram}$ acts on $M$ as $-1$. Thus
 $M^{\otimes 3}$  does not   have  nonzero $\cK^{\ram}$-invariant linear forms.  By  \Cref {scusp}, $C$ is auto-critical.  
    
\end{eg}

  \begin{eg}\label{eg:5egs}
 For $N=40,48,50,54,72$, $C $  is auto-critical.  
 \end{eg}

  \begin{eg}\label{eg:2,5}
 
    The modular curve    $X(2,5)$ is auto-critical.
         \end{eg}
     
    Now we consider  $F=\BQ$ and $B\neq \RM_{2,\BQ}$.

      \begin{eg}\label{eg:ShA1} 
        Let $S_B=\{2,3\}$ and $C=Y_0^A(1)$.

(1)  For $A=2\cdot 3^4$, $C$ is not auto-critical. 
      
      (2) For $A=2^2\cdot3^3$, $C$ is not auto-critical.

  (3) For $A=2^5\cdot3$, $C$ is not auto-critical.  
  
(4) For $A=2\cdot 3^3=54$, $C$ is  auto-critical.

(5) For $A=2^4\cdot 3^2=144$, $C$ is  auto-critical.

      \end{eg}

 \begin{eg}\label{eg:ShA2} 
        Let $S_B=\{2,5\}$ and $C=Y_0^A(1)$.

(1)  For $A=2\cdot 5^3$, $C$ is not auto-critical.

    (2)   
     For $A=2^3\cdot 5=40$, $C$ is not auto-critical. 
           
        (3) For $A=2^2\cdot5^2$, $C$ is  auto-critical.

      \end{eg}  
      
        \begin{eg}\label{eg:ShA3} 
        Let $S_B=\{2,11\}$ and $C=Y_0^A(1)$.

(1)   For $A=2\cdot11^2=242$, $C$ is  not auto-critical.   
      
  (2)  For $A=2^5\cdot 11=352$, $C$ is not auto-critical. 
   
(3) For $A=2^4\cdot 11$, $C$ is  auto-critical.

       \end{eg} 

      \begin{eg}\label{eg:ShA4} 
        Let $A=11^n\cdot 3$ and $C=Y_0^A(1)$.

   For $A=11^2\cdot 3$, $C$ is not auto-critical.

          \end{eg}

      \begin{eg}\label{eg:ShA9} 
     
        Let $A=2^n\cdot q,q=7,17,29,41$ and $C=Y_0^A(1)$.

(1)   For $ A= 2^3\cdot q$, $C$ is   not auto-critical.   

(2) For $ A= 2^2\cdot q$, $C$ is  auto-critical.    
 \end{eg}

    \begin{eg}\label{eg:ShA7} 
        Let $A=5^n\cdot 3$ and $C=Y_0^A(1)$.

(1)   For $ A= 5^3\cdot 3$, $C$ is   not auto-critical.

(2) For $ A= 5^2\cdot 3$, $C$ is  auto-critical.
       \end{eg} 
   
         \begin{eg}\label{eg:ShA8} 
      Let $A=3^n\cdot q, q=5,7,19,31$ and $C=Y_0^A(1)$.

(1)   For $ A= 3^3\cdot q$, $C$ is   not auto-critical.  

(2)  For $ A= 3^2\cdot 5,$ $C$ is auto-critical.

(3) For $ A= 3^2\cdot q,q=7,19,31$, $C$ is  not auto-critical.

  \end{eg}

         \begin{eg}\label{eg:ShA6} 
          Let $A=7^n\cdot 2$ and $C=Y_0^A(1)$.  
          
(1)   For $ A= 7^3\cdot 2$, $C$ is   not auto-critical.

(2) For $ A= 7^2\cdot 2$, $C$ is  auto-critical.

         \end{eg}

             \begin{eg}\label{eg:ShA5}

    For $A=  23^2\cdot 2$, $C=Y_0^A(1)$ is not auto-critical.

       \end{eg}

  \begin{eg}\label{eg:ShA10} 
    Let $A=p^nq$ and $C=Y_0^A(N)$.

(1)   For $p=2,q=3$, $C$ is not auto-critical in any of the following cases:
 $$n=2,\quad N=19,43;$$
 $$ n=3,\quad N=5,7,13,19.$$

  (2) For  $A=3^2\cdot 2,N=17$,  $C$ is not auto-critical.
  
    (3) For  $A=3^3\cdot 2,N=5,13$,  $C$ is not auto-critical.

 (4) For $A=2^2\cdot 3, N=5,7,13$, $C$ is  
  auto-critical.
  
(5) For  $A=3^2\cdot 2,N=5,13$,  $C$ is  auto-critical.   
 
 (6) 
  For $ A= 3^2\cdot 7,N=2$, $C$ is    auto-critical.

      \end{eg}
      
        \begin{eg}\label{eg:ShA11} 
          Let $A=p^nq$ and $C=Y_0^A(N)$.
          
   (1) For $n=2$,   $C$ is not auto-critical  in any of the following cases:
  $$p=2,q=5,  \quad N= 7,13;$$
$$(p,q,N)=(2, 11, 3),(2,17,3),(5,3,7).$$ 

  (2) For $A=2^2\cdot 5,N= 3$, $C$ is  auto-critical.

       (3) For $A=5^2\cdot 2,N= 3,19$, $C$ is not auto-critical.

 \end{eg}  
 
 Below, we consider more general levels.
   \begin{eg}\label{eg:ShAX0} 
    Let   $C=X_0^A(1)$.

   (1) For $A=5^2\cdot 2$, $5^2\cdot 3$, $7^2\cdot 2$, $C$ is   auto-critical. 
  
  (2) For $A=2^4\cdot3^ 2$, $C$ is  not  auto-critical.    
   \end{eg}
\begin{eg}\label{eg:ShAX1} 
    Let   $C=X_1^A(N)$ where $A=p^nq$ and $N$ is a prime.

   (1) For $A=2^2\cdot 3$, $N=5$, $C$ is not  auto-critical.

    (2) For $A=2^2\cdot 3$, $N=7$, $C$ is not  auto-critical.

  (3) For $A=3^2\cdot 2$, $N=5$, $C$ is not  auto-critical.

     \end{eg}

   \subsection{Classical modular curves}
\label{Standard odular curves}

       Let $F=\BQ$, $B=\RM_{2,\BQ}$ so that the Shimura  curves  are classical  modular curves. 
       We first recall  the classification of subhyperelliptic   modular curves $X_0(N)$.
       As they have cusps which are rational points over $F$, they are  subhyperelliptic   if and only if their base changes to $\BC$ are  subhyperelliptic.
  
  The modular curve
 $ X_0(N)$  has genus $  0$ if the only if 
$N$ is in the following list:
 \begin{equation}\label{g=0}
1, 
2, 
3, 
4, 
5, 
6, 
7, 
8, 
9, 
10,  
12, 
13,  
16,  
18,  
25, 
 \end{equation} The prime numbers are
$2,3,5,7,13$. 
And   $ X_0(N)$  has genus $  1$ if the only if 
$N$ is in the following list:
 \begin{equation}\label{g=1}
 11, 
14, 
15, 
17, %
19, %
20, 
21, 
24, 
27, %
32, %
36, 
49.
 \end{equation} The prime numbers are
$ 11, 17,19. $  
  For   hyperelliptic curves,
   Ogg \cite[Theorem 2]{Ogg} showed that  the following list of 19   numbers
 \begin{equation}\label{Ogglist}
 22,   23, 26, 28, 29, 30, 31, 33,  35, 37, 39, 40, 41, 46, 47, 48, 50, 59, 71
 \end{equation} 
  are the only possibilities of $N$ such that $ X_0(N)$   is 
  hyperelliptic. 
  And the following are all the pairs  $(N,p)$  such that $ X_0(N)$   is 
     $g\lb X_0(N)/\pair{w_p}\rb=0$, where $p|N$ is   a prime number:
   \begin{equation}\label{Ogglist1}
(22,11),(23,23),(28,7),(29,29),(31,31),(33,11),(41,41),(46,23),(47,47),(59,59), (71,71)
 \end{equation}

Recall  that by \Cref{lem:YKXK}, $X_0(N)=Y_0(N)(=Y_1(N))$, and   they and $X_1(N)$ are geometrically connected by \eqref{eq:pi0} .
    \begin{thm} \label{thm:auto-critical}
      Let $F=\BQ$, $B=\RM_{2,\BQ}$.

(1) The modular curve $ C=X_0(N)=Y_0(N)=Y_1(N)$   is auto-critical if the only if one of the following happens:

\begin{itemize}
\item 
   $N$ is  in   \eqref{g=0} (i.e., $g(C)=0$);
\item
  $N$ is  in   \eqref{g=1}   and $N\neq 49$,  in which case $g(C)=1$;
\item
    $N$ is  in   \eqref{Ogglist}   and $N\neq 37 $, in which case  $C$ is hyperelliptic.
      \item
    $N=54$, in which case  $C$ is non-hyperelliptic and of genus $4$. 
    \item
    $N=72$, in which case  $C$ is non-hyperelliptic and of genus $5$.

  \end{itemize} 
  
(2)    The modular curve $ C=Y(N)$   
  is auto-critical  if the only if one of the following happens:
  \begin{itemize}
\item 
  $N\leq 5$,  in which case $g(C)=0$;
\item
 $N=6$,  in which case $g(C)=2$.
 
  \end{itemize}

 (3)   The modular curve $ C=X_1(N)$     is auto-critical  if the only if one of the following happens:
\begin{itemize}
\item 
 $N\leq 10$ or $N=12$,  in which case $g(C)=0$;
\item
  $N=11 ,14,15 $, in which case
  $g(C)=1$;
 \item
  $N=13,16,18$, in which case $g(C)=2$.
   \end{itemize}

(4)    The modular curve $ C=X(N)$     is auto-critical  if the only if  
  $N\leq 6$, in which case $X(N)=Y(N)$.

  \end{thm} 
\begin{rmk} \label{exceptions}
 
  It might be interesting to point out that $X_0(37)$ in  \Cref{thm:auto-critical} (1) is the only case when $X_0(N)$ is hyperelliptic with  a hyperelliptic involution  not induced by an automorphism of the upper half plane $\BH$. This is a remarkable result of Ogg \cite[Theorem 1]{Ogg}. 
 

 \end{rmk}

     The theorem   will be proved in this subsection.
    Before that,
   let us try to identify the   auto-critical Shimura curves that are not  subhyperelliptic.

  \begin{eg}\label{eg:auto-critical} 
   The base change  $X=X_0(54)_{\BC}$  is connected   and  non-hyperelliptic of genus $4$. 
It has a smooth affine model 
$$
y^3 = (x^3-1)(x^3 + 1) 
$$
 and  $\Aut(X)\cong\BZ/6\times S_3$, where $S_3$ is the symmetric group of three elements.
It has Group ID  $(36,12)$  in the Small Groups Library.  In particular, it is not a curve in our previous work \cite[Section 4]{QZ1}.
 Indeed, by \cite [Theorem 16]{Bar}, $\Aut(X)\cong \BZ/6\times S_3$.
      By  \cite[Table 4]{MSSV}, there is a unique such curve.  Its equation is found in \cite{Sw}. 
\end{eg} 
 \begin{eg}\label{eg:auto-critical0} 
 The base change  $X=X_0(72)_{\BC}$ 
is connected   and  non-hyperelliptic of genus $5$. 
 Moreover,
 $X$ is one of Wiman's curves discussed in our previous work  \cite[4.1.5]{QZ1}.
 Indeed,  by \cite [Theorem 16]{Bar},  $\Aut(X)\cong D_4\ltimes A_4\cong \GL_2(\BZ/4)$, 
 which 
has  Group ID  $(96,195)$ from the Small Groups Library
   where $D_4$ is the dihedral group of order $8$ and $A_4$ is the alternating  group of four elements.
 
\end{eg} 

To prove \Cref{thm:auto-critical}, we need some preparations.

 \begin{lem}\label{lem:auto-critical2}

   If  $  X_0(N) $ is auto-critical  and  there exists a prime $p|N$  such that  $g\lb X_0(p)\rb >0$ (that is, $p\neq 2,3,5,7,13$), then  $\Pi_p=\St$ for every $\Pi$ appearing in $X_0(N)$.
   Moreover,  if $N=p^rN'$ where $r$ is a positive integer and $p\nmid N'$, then    $g\lb X_0(pN')/\pair{w_p}\rb=0$. 
  
\end{lem} 
 \begin {proof} 
  Take   $\Pi'$ appearing in  $ X_0(p) $ (also  in  $ X_0(N)$) so that $\Pi'_{v\neq p}$ is a principal series.
  Since $g(X_0(1))=0$, $\Pi'_p$ is not a principal series. So $\Pi'_p=\St\otimes \pm 1$.
 By \Cref {lem:ep}  (2) at $p$,  $\Pi'_p=\St $ (otherwise $\Pi'^{\otimes 3}$ has nonzero  $\GL_2(\BA_\rf)$-invariant linear forms and
  $ X_0(N) $ is not auto-critical).  
   If  $\Pi_p\neq\St $, $\Pi_p$ is   a principal series, $\St\otimes -1$ or of conductor $>1$.
  By \Cref{lem:ep}  (1) (2) (4)  at $p$ respectively,
$\Pi\otimes \Pi'^{\otimes 2}$ has nonzero  $\GL_2(\BA_\rf)$-invariant linear forms. So 
   $X_0(N)$  is not auto-critical, a contradiction. 
  So $\Pi_p=\St $. In particular, this holds for representations appearing in $X_0(pN')$. By \Cref{minimal} (2),   $g\lb X_0(pN')/\pair{w_p}\rb=0$. 
  \end{proof}

   \begin{cor} \label{cor:auto-critical1}
Let  $C= X_0(N)$ be auto-critical. 
  For    a prime number   $ p|N$,  if $p\neq 2,3,5,7,13$, then $p\|N$, 
   and  $(N,p) $ is in \eqref{Ogglist1} or $N=p=11,17,19$.    In particular, if $N>71$, then $p= 2,3,5,7$ or $13$.
    \end{cor} 
    
      \begin{proof} Let  $N=p^rN'$ where $r$ is a positive integer and $p\nmid N'$. By Riemann--Hurwitz $$g(C)-1\geq p^{r-1}\lb g\lb X_0(pN')\rb-1\rb.$$
Now   let $p\neq 2,3,5,7,13$  (so that $g\lb X_0(p)\rb >0$ and thus $g\lb X_0(N)\rb\geq g\lb X_0(pN')\rb >0$), by \Cref{lem:auto-critical2} and \eqref{mn},
$g(C)=r g\lb X_0(pN')\rb .$

      Let us  prove $p\|N$, i.e., $r=1$.
           Assume   $r>1$. If $g\lb X_0(pN')\rb>1$, we then have $p^{r-1}\leq 2r-1$, which is 
 impossible. 
 If $g\lb X_0(pN')\rb=1$, then inspecting \eqref{g=1}, we find $N'=1$ and $p=11,17,19$. 
However, if  $N=p^r$ where $p=11,17,19$ and $r>1$, then $C$ is not auto-critical. 
      Indeed, using \textsf{LMFDB}, if $r>1$, we find $\Pi$ appearing in $C$ with $\Cond(\Pi_p)>1$. This is a contradiction to \Cref{lem:auto-critical2}.
 Thus $r=1$. 
 The rest follows from \Cref{lem:auto-critical2}.
    \end{proof} 
    
  \begin{prop} \label{prop:>71}
Let $N>72$. Then  $C= X_0(N)$  is not auto-critical.
\end{prop} 
      
     \begin{proof} 
  Since $N>72$, by   \Cref{cor:auto-critical1}, if $ p|N$,   $p= 2,3,5,7$ or $13$. 
We want to use \Cref{lem:auto-critical}  and some explicit examples   (Example  \ref{eg1}-\ref{eg6}) to  remove the possibilities of $N$ step by step, and  
   show that no such $N$   could exist.
 
By  Example \ref{eg1} (and \Cref{lem:auto-critical}), $N|2^5\cdot 3^3\cdot 5^3\cdot 7^2\cdot 13 $.

By  Example \ref{eg2} (and that  $N>72$), $13\nmid N$, i.e., $N|2^5\cdot 3^3\cdot 5^3\cdot 7^2 $.

     By  Example \ref{eg3}, $7\nmid N$, i.e., $N|2^5\cdot 3^3\cdot 5^3  $.

          By  Example \ref{eg4},  $5^2\nmid N$, i.e., $N|2^5\cdot 3^3\cdot 5  $.

  By  Example \ref{eg5}, $5\nmid N$, i.e., $N|2^5\cdot 3^3   $. 
  
    By Example \ref{eg6}, no such $N$ could exist.
    \end{proof}

      \begin{prop} \label{prop:<71}
Let $N\leq 72$. Then  $C= X_0(N)$ is  auto-critical if the only if one of the following happens:

\begin{itemize}
\item 
   $N$ is  in   \eqref{g=0} (i.e., $g(C)=0$);
\item
  $N$ is  in   \eqref{g=1}   and $N\neq  27, 49$,  in which case $g(C)=1$;
\item
    $N$ is  in   \eqref{Ogglist}   and $N\neq 37$, in which case  $C$ is hyperelliptic;
  
\item $N=54,72$.

  \end{itemize} 
  
In particular,  $C$ is subhyperelliptic.
  \end{prop} 
  \begin{proof} 

Recall that by \Cref{cor:auto-critical1}, if $C$ is auto-critical, one of the following happens:
  $N$ is   in \eqref{Ogglist1}; 
or    $N=11,17,19$; or  prime factors of $N$  are in $2,3,5,7,13$.
 Also recall that in the examples in the proof of the  previous proposition, we have shown that
if $N$ is in  the following list, then $C$ is not auto-critical:
$$
64  \text{ in Example \ref{eg1}};  52, 65  \text{ in Eg. \ref{eg2}}; 42, 70,56, 63   \text{ in Eg. \ref{eg3}};   45, 60  \text{ in Eg. \ref{eg5}}. 
$$

From the last two facts just recalled, by enumerating, one knows   that  if $C$ is auto-critical, 
then
 \begin{itemize}
\item 
 $N$ is   in \eqref{Ogglist1}, or 
  \item    $N$ is  in   \eqref{g=1}, or 
    
 \item $N=26,35, 39, 40,    48,50, 54,72$.
   \end{itemize} 

                     Let us check them case by case.   
                  
                  First,  we use \textsf{LMFDB} to  check  that $\Pi_p=\St$ for every $\Pi$ appearing  in $C$ if
  $(N,p)$  is  in \eqref{Ogglist1} or 
the following list, where $N$ is in  \eqref{g=1}:  
 $$     
 (11,11),
(14,7),
(15,5),
(17,17),
(19,19),
(21,3) .
 $$ 
Thus $C$ is auto-critical by \Cref{lem:ep} (2).

Second, we need to check
 $N=  20, 24, 
27,   32, 
36,    
49,$ the rest of    \eqref{g=1}.  This is done  in Example \ref{eg:6egs}.
      
   Third, for $N=26,30,35, 39$ 
   which are square-free.  By \cite[Theorem 2]{Ogg}, the  hyperelliptic involution of $C$ is a product of Atkin--Lehner involutions. By \Cref{lem:hypauto-critical},  $C$ is auto-critical.

 Now there are only $N=  40,    48,50,54,72 $ left and we check their auto-criticalness in Example \ref{eg:5egs}.
    \end{proof}

   \begin{proof}[Proof of \Cref{thm:auto-critical}]
The computation of the genera  is standard using the genus formula (and 
 number of geometrically connected components), and is
omitted
 (or using  \eqref{eq:H10}, \eqref{mn}  and \cite[Section 4]{MY}).     Now let us prove the remaining of the theorem. 
   (1) was proved in \Cref{prop:>71}  and \Cref{prop:<71}, with the  non-hyperellipticity  
   of $X_0(54),X_0(72)$ by \eqref{Ogglist}.  
  (2) follows from  (1) and  \Cref{lem:square1}.

  For (3), by (1) and \Cref{lem:auto-critical},  we only need to check $N $ in the lists \eqref{g=0}, \eqref{g=1} and \eqref{Ogglist}, excluding $ 49,   37$, and $N=54, 72$,
   such that $g(C)>0$, that is $N=11$, or $N>12$. 
Using \textsf{LMFDB},  for $N\neq 11,13,14,15,16,18$, 
  we find      $\Pi_1,\Pi_2,\Pi_3 $  appearing in $C$ such that $\Pi_{i,p},i=1,2,$  (in fact Galois conjugate)  are principal series for any prime $p$ and $\Pi_1\otimes \Pi_2\otimes \Pi_3$ has trivial central character. 
   Then $C$ is not auto-critical by \Cref{lem:ep}  (1).
  For 
  $N =22,30$,  we find      $\Pi_1,\Pi_2 $  appearing in $C$ such that $\Pi_1$ has level  $N/2$ so that it is a principal series at the prime $2$, while $\Pi_2$ is a principal series at every prime $p\neq 2$. Then $C$ is not auto-critical by \Cref{lem:ep}  (1) applied to $\Pi_1^{\otimes 2}\otimes \Pi_2$  or $\Pi_1\otimes \Pi_2^{\otimes 2}$. 
   For $N= 13,16$, there are only  $\Pi_1,\Pi_2  $  (in fact Galois conjugate) appearing in $C$  and 
no   $\Pi_1^{\otimes n}\otimes \Pi_2^{\otimes 3-n} $ has trivial central character.        Then $C$ is  auto-critical by \Cref{lem:cc}.  
  For $N= 11,14,15$, there is only one $\Pi$ and $\Pi_p=\St$ at some $p|N$.  For $N=18$,  there is only one $\Pi$ and $\Pi_2=\St\otimes \chi$, for some  order $3$ characters   $\chi\neq 1$. Here we use \cite[Proposition 2.8 (2)]{LW} to identify the local component.  Then $C$ is  auto-critical by \Cref{lem:ep}  (2).

  (4) follows from  (2), \Cref{lem:auto-critical} and    the fact that  any $\Pi$ appearing in $X(N)$ has trivial central character (i.e., appears in $Y(N)$)   in this case, which can be found by using \textsf{LMFDB}. 
        \end{proof}

      \subsection{Shimura  curves over $\BQ$ (I)}

In this and next subsection, we     discuss  more general quaternionic Shimura  curves.
           Assume $F=\BQ$ and $B\neq \RM_{2,\BQ}$, i.e., $|S_B|>0$. 
  Still, let $$D=\prod_{p\in S_B}p,$$  which is the discriminant of $B$ and determines $B$.
               We want to classify auto-critical $X_*^A(N)$ and    $Y_*^A(N)$, $*\in \{0,1,\emptyset\}$, where $A$ is a product of all primes with positive powers in $S_B$.    If $A$ is  square-free, i.e., $A=D$, then
               $X_*^A(N)$ and    $Y_*^A(N)$  are simply denoted by $X_* (N)$ and    $Y_* (N)$ to simplify the notations. We discuss this case in this subsection.
Again, by \Cref{lem:YKXK}, $X_0(N)=Y_0(N)(=Y_1(N))$, and   they and $X_1(N)$ are geometrically connected by \eqref{eq:pi0} .

   Recall  that    (by \Cref{lem:degX1X}) $X_0(N)$ of genus $\leq 2$ is classified in  \cite[Table 4.1]{Voi}. 
   Also recall  that
      hyperelliptic $X_0(N)_{\BC}$ is classified in  \cite[Theorem 7,8]{Ogg1}.  
      In particular, there are only finitely many pairs $(D,N)$ such that $X_0(N)_{\BC}$ is  subhyperelliptic.
Moreover, we find that $X_0(N)_{\BC}$ is  subhyperelliptic with $N>1$ if and only if one of the following happens:
         $$D=6,\quad N=5,7, 11,13 ,  17,19,29,31,37;$$
$$D=10,\quad N=3,7, 11,13 ,   19,23;$$
$$D=14,22,\quad N=3,5;$$
$$D=15,\quad N=2,4;$$
$$(D,N)=(21,2),(26,3),(39,2).$$

    \begin{thm} \label{thm:Shauto-critical}
Let $F=\BQ$ and $B\neq \RM_{2,\BQ}$.

   (1)   The Shimura curve $  C=X_0(N)=Y_0(N)=Y_1(N)$
   is auto-critical if and only if one of the following happens:
         \begin{itemize}
 
  \item $   |S_B| =2$ and $C_{\BC}$ is subhyperelliptic;
    
    \item  $S_B=\{2,3,5,7\},\{2,3,5,11\}$
 and $N=1 $, in which case $C_{\BC}$ is of genus $5$ and is not subhyperelliptic.
\end{itemize} 
In particular,  there are only finitely many such curves as $B,N$ varies.

 (2)    The Shimura curve $ C=Y(N)$    with $N>1$ 
  is auto-critical  if the only if    $D=15$ and $N=2.$
In this case $C_{\BC}$ is   connected,  of genus $5$ and $g(C/\pair{w_{3}w_5})=0$.

  (3)  The Shimura curve $ C=X_1(N)$ with $N>1$     is auto-critical  if the only if one of the following happens:
          $$D=6,\quad N=5,7;$$
$$D=10,\quad N=3;$$
$$D=14,\quad N=3,5;$$
$$D=15,\quad N=2,4;$$
$$(D,N)=(21,2),(22,3),(26,3),(39,2),$$
   in which case,  $C_{\BC}$ is subhyperelliptic.

 (4)   The Shimura curve $ C=X(N)$ with $N>1$      is auto-critical  if the only if 
$D=15$ and $N=2,$
 in which case $X(N)=Y(N)$.

  \end{thm} 
  
  \Cref{thm:Shauto-critical} will be proved later in this subsection.  
  \begin{rmk}\label{readogg}
 (1) Recall  that by definition,  the natural maps 
 $X(1)\to X_1(1)\to X_0(1)$ and 
  $Y(1)\to Y_1(1)\to Y_0(1)$ are isomorphisms.  This is the reason for letting $N>1$ in \Cref{thm:Shauto-critical} (2)(3)(4)
  
  (2) The finite set of Shimura curves satisfying the first condition in (1) of the theorem can be read from   
\cite[Theorem 7,8]{Ogg1} and  \cite[Table 4.1]{Voi}. This is actually used in its proof.
\end{rmk}

   \begin{eg}\label{eg:Shauto-critical} 
 
Let $ S_B=\{2,3,5,7\},\{2,3,5,11\}$ and  $X=Y_0(1)_{\BC}$ which is connected non-hyperelliptic of genus $5$.
Then   $\Aut(X)= (\BZ/2)^4$.     In particular, it is not a curve in our previous work \cite[Section 4]{QZ1}, neither a curve appeared previously in this paper.
Indeed, as $\Aut(X)> (\BZ/2)^4$, if $\Aut(X)\neq (\BZ/2)^4$, then $|\Aut(X)|>4(5-1)$, i.e., $X$ has a large automorphism group in the sense of \cite{MSSV}. 
However, by \cite[Proposition 1]{Rot}, $\Aut(X)= (\BZ/2)^s$ for some $s\geq 4$. 
 Then by  \cite[Table 4]{MSSV},    checking the automorphism groups there divided by $16$, we find non of them is of this form with $s>4$. So  $\Aut(X)= (\BZ/2)^4$. From \textsf{LMFDB}, we find that the  family of  genus 5 Humbert curves are the only curves with this automorphism group. So $X$ is  a Humbert curve. Their criticalness is already proved in \cite{Lat}
\end{eg} 

Now we prepare to prove \Cref{thm:Shauto-critical}.
   The idea of the  proof  is pretty different from the proof of \Cref{thm:auto-critical}, as we are dealing with all quaternion algebras  simultaneously.
   We need first to narrow down the possibilities of the quaternion algebras.
   
   We need some lemmas that in fact hold for more general $F$. So let us not  assume  $F=\BQ$ for the moment. 
      Let    $K \subset  B^\times(\BA_\rf)$ be  an open compact subgroup such that      
                   \begin{equation*} 
                   K_{v}=\cO_{B_{v }}^\times \text{ for }v\in S_B.
 \end{equation*}
 Note that $K$ \textit{does not} contain $\BA_\rf^\times$.
 Let $ C=Y_{ K}:=X_K/\BA_\rf^\times.$  
 Recall that for  a representation $\Pi$   appearing in $C$,   $\Pi_v=\pm 1$ for $v\in S_B$.

We use two technical notions.  We say that   $T\in \{\pm 1\}^{{S_B}}$ appears in $C$ if $T=\lb\Pi_{v}:v\in {S_B}\rb$ for some $\Pi$   appearing in $C $.
Say $T$ is odd (resp. even) if  the product of its components is  $-1$ (resp. $1$).
 By definition, we have the following lemma.
      \begin{lem}\label{lem:Shauto-critical} 
(1) Assume that   $ C $ is auto-critical and $K$ is maximal. Then for  $T_1,T_2,T_3\in \{\pm 1\}^{{S_B}}$   appearing in $C$, 
the component-wise product  $T_1\cdot T_2\cdot T_3\neq(1,...,1) $, 
 
 (2)  Assume  that there exists a  subset $S\subset S_B$ such that 
  $\prod_{v\in S}\Pi_v=-1$ for all $\Pi$ appearing in $C$, then $g\lb C/\pair{w}\rb=0$, where
  $w=\prod_{v\in S} w_v.$
\end{lem}

  We have the following  three lemmas which are partial analogs of
\Cref{lem:AL} and \Cref{lem:auto-critical2}.
   \begin{lem}\label{lem:Shauto-critical1} 
   Assume that   $ C $ is auto-critical. 
   Assume   that $   |{S_B}| \leq 3$,   $K$ is maximal and
  $ C $ is auto-critical.
  Then $g\lb C/\pair{w}\rb=0$, where  $w=\prod_{v\in S} w_v$ for some 
subset $S\subset {S_B}$.    \end{lem} 
     \begin{proof}    If $ |{S_B}|=0$,  this is a special case  \Cref{lem:AL} with $\fN, \fa  =\cO_F$.      
 
      If $ |{S_B}|=1$ and ${S_B}=\{v_0\}$, 
      by \Cref{lem:Shauto-critical} (1), 
           $\Pi_{v_0}=-1$ for $\Pi$   appearing in $C$. 
    Then  the lemma follows from   \Cref{lem:Shauto-critical} (2).
 
Assume $ |{S_B}|=2$ and ${S_B}=\{v_1,v_2\}$. 
Since the component-wise product  $(1,1)\cdot(1,1)\cdot(1,1)=(1,1) $, 
 by \Cref{lem:Shauto-critical} (1), 
  $  (1,1)$ 
  does not  appear  in $C$. 
  And since the component-wise product $(1,-1)\cdot(-1,1)\cdot(-1,-1)=(1,1) $,  by \Cref{lem:Shauto-critical} (1), 
one of  $(1,-1), (-1,1), (-1,-1)$ does not   appear  in $C$.
Then  the lemma follows from   \Cref{lem:Shauto-critical} (2).

 If $ |{S_B}|=3$ and ${S_B}=\{v_1,v_2,v_3\}$,    $(\pm 1)^{{S_B}}$  has $8$ elements.
 Let us first   consider the four  even ones.
By \Cref{lem:Shauto-critical} (1),   $ (1,1,1)$ 
 does not  appear  in $C$.   Since the component-wise product of $(1,-1,-1), (-1,1,-1)$ and $ (-1,-1,1)$ is $(1,1,1)$, one of them    does not  appear  in $C$. Then up to permutation of $\{v_1,v_2,v_3\}$, we have three cases: first,    non of them  appears  in $C$;  second,  only $(1,-1,-1)$    appears  in $C$; finally both 
 $(1,-1,-1), (-1,1,-1)$    appear  in $C$ . 
 In each case, let us consider the four odd triples left:
 $$a=(-1,-1,-1),b=(1,1,-1),c=(1,-1,1),d=(-1,1,1).$$ 
 $(1,-1,-1)$ and $(-1,1,-1)$ 
 
  In the first case, only (some of) the odd triples appear in $C$, and   the lemma follows from  \Cref{lem:Shauto-critical} (2)
  and the oddness.
 In the second case, by considering   component-wise products with $(1,-1,-1)$ and using by \Cref{lem:Shauto-critical} (1), we see that $a,d$ as well as  $b,c$   do not   appear in $C$ together. For each of the rest possibilities, 
  it is easy to prove  the lemma by  \Cref{lem:Shauto-critical} (2).
     In the final case, by considering   component-wise products with $(1,-1,-1)$ (resp. $(-1,1,-1)$)    we see that $a,d$ as well as   $b,c$ (resp. $a,c$ as well as  $b,d$) do not   appear in $C$ together. For each of the rest possibilities, 
   it is easy to prove  the lemma by  \Cref{lem:Shauto-critical} (2).
         \end{proof} 
      
             \begin{lem}\label{lem:Shauto-critical2}   
 
  Assume  that  $   |S_B| \leq2$ and    $ C $ is auto-critical. 
  Further assume that $g\lb Y_{K'}\rb>0$ for  (one and thus all) $K'\subset  B^\times(\BA_\rf)$ maximal.
   Then $g\lb C/\pair{w}\rb=0$,  where  $w=\prod_{v\in S} w_v$ for some 
subset $S\subset S_B$.  
\end{lem} 
 \begin {proof} 
We may assume $K\subset K'$. Let us deal with the case  $   |S_B| =2$, say $S_B=\{v_1,v_2\}$. 
The other cases are easier and omitted.  

Let   $\Pi'$ be appearing in  $ Y_{K'}$ so that $\Pi'_{v\not\in S_B}$ is a principal series. 
Let $\Pi $ be appearing in $C$.
Then since the component-wise product  $\lb\Pi'_{v_1},\Pi'_{v_2}\rb^{\cdot2} \cdot(1,1)=(1,1) $, 
by \Cref{lem:ep}  (1),
$\Pi\otimes \Pi'^{\otimes 2}$ has nonzero  $B^\times(\BA_\rf)$-invariant linear forms
if 
  $\lb\Pi_{v_1},\Pi_{v_2}\rb= (1,1)$. So $(1,1)$
  does not  appear  in $C$. 
  The rest of the proof is similar to the one  for the case $   |S_B| =2$ in \Cref{lem:Shauto-critical1} and omitted.
 \end{proof}

         \begin{lem}\label{lem:Shauto-critical3}   
  Assume  that   $   |S_B|=2$, $   Y_0(\fp) $ is auto-critical for a prime $\fp$ and $g\lb Y_0(\cO_F)\rb=0$. 
       Then $g\lb Y_0(\fp)/\pair{w}\rb=0$, where  $w=\prod_{v\in S} w_v$ for some 
subset $S\subset S_B\cup \{\fp\}$.
\end{lem} 
 \begin {proof} 
Since $g\lb Y_0(\cO_F)\rb=0$, for $\Pi$ appearing in $Y_0(\fp)$   $\Pi_\fp=\St\otimes \pm 1$, and determined by the Atkin--Lehner sign (which is $\mp1$).  
     The rest of  the proof is similar to the one  for   \Cref{lem:Shauto-critical1}.
  \end{proof} 
  
  Now let $F=\BQ$. Let $B\neq \RM_{2,\BQ}$, i.e., $|S_B|>0$. 
   Recall $S_ N:=\{p:p|N\}$.
   
   \begin{lem}\label{lem:Shauto-critical5}   
  Let  $N$ be square-free and prime to $D$. If  there are (not   necessarily  different)  newforms  $f_i,i=1,2,3,$   of level $DN$  with trivial central character whose   Atkin--Lehner signs $s_{i,p},p\in S_B\cup S_N ,$
 satisfy   
  $\prod_{i=1}^3s_{i,p}=-1$ for $p\in S_B $ and   $\prod_{i=1}^3s_{i,p}=1$  for $p\in S_N$, then $Y_0(N)$ is not auto-critical.

\end{lem} 
 \begin {proof}  
By    \Cref{minimal} (2),  for $\Pi_i,i=1,2,3,$ appearing in 
$Y_0(N)$, we can
 apply \Cref{lem:ep} (2) to conclude that   $  \Hom_{B_p^\times}\lb \Pi_{1,p}\otimes \Pi_{2,p}\otimes \Pi_{3,p},\BC\rb\neq 0$ for $p\in S_N$.
Apply  \Cref{JLpm1} to conclude this for $p\in S_B$.
     \end{proof}

  The proof of   \Cref{thm:Shauto-critical}
 (1) will start with the following. 
     \begin{prop}\label{cor:Shauto-critical2}    
         Assume  that  $   |S_B| =2$ and $ C =Y_0(N)$ is auto-critical. Then    $g\lb C/\pair{w}\rb=0$, where    $w=\prod_{v\in S} w_v$, for some
subset   $S\subset S_B\cup S_N $.  In particular, $C$ is subhyperelliptic 
\end{prop} 
  The proposition will be proved after a lemma.
  
By the classification  $Y_0(N)$ of genus $\leq 2$   in  \cite[Table 4.1]{Voi},  we have   \begin{equation}
\label{g(Y_0(N))>0}
g(Y_0(N))>0 
\text{ if both }D,N>1;
\end{equation}  
 \begin{equation}
\label{g(Y_0(1))=0}
g(Y_0(1))=0
\text{ if and only if } D=6,10,22.
\end{equation}

    \begin{lem}\label{lem:Shauto-critical4}   
  Assume    $g\lb Y_0(1)\rb=0$ and  $  Y_0(p^n) $ is auto-critical for a prime $p$. Then  $n=1$ and
   $g\lb Y_0(p^n)/\pair{w}\rb=0$, where  $w=\prod_{v\in S} w_v$ for some 
subset $S\subset S_B\cup \{p\}$. 
\end{lem} 
 \begin {proof} 

  By  \Cref{lem:auto-critical} and  \Cref{lem:Shauto-critical3}, 
$g\lb Y_0(p)/\pair{w}\rb=0$, where  $w=\prod_{v\in S} w_v$ for some 
subset $S\subset S_B\cup \{p\}$. 
  We only needed to show that $n=1$.
  Before this, we note that by \eqref{g(Y_0(1))=0}, $D=6,10,22$. 
And as $g\lb Y_0(p)/\pair{w}\rb=0$, $p$ is as  above \Cref{thm:Shauto-critical} (i.e.,  one of the $N$'s) following a given $D$.

  If $n> 1$, we use \textsf{LMFDB} to find  $\Pi$ appearing in $Y_0(p^2) $ (also in $Y_0(p^n ) $) such that  $\Pi_v=1$  for $v\in S_B$  (note the change of sign under   Jacquet--Langlands correspondence, see \Cref{JLpm1})  and $\Cond(\Pi_p)=2$.   Let $\Pi'$ appear in $Y_0(p)$ (as $g(Y_0(p))>0=g(Y_0(1))$ by \eqref{g(Y_0(N))>0}). Then $\Pi'^{\otimes 2}_v\otimes \Pi_v=1$  for $v\in S_B$, as $\Pi_v=1$.
Since  $\Cond(\Pi_p)= 2$, by \Cref{lem:ep} (4), $Y_0(p^n) $ is not auto-critical, a contradiction.
    \end{proof}  

       \begin{proof}[Proof of  \Cref{cor:Shauto-critical2}]  Assume the contrary, i.e., $g\lb C/\pair{w}\rb\neq0$, where    $w=\prod_{v\in S} w_v$, for any
subset   $S\subset S_B\cup S_N$.  
By  \Cref{lem:Shauto-critical2}, 
$g\lb Y_0(1)\rb=0$. 
  We will prove that $|S_N|=1$. Then we have a contradiction  to \Cref{lem:Shauto-critical4}.

First,  we claim  that if $|S_N|>1$, then for $p\in S_N$, one of the following happens: 
$$D=6, \quad p=5,7, 13;
$$
$$D=10, \quad p=3,7. 
$$ 

Second,
assuming the claim, we find that $C$ can not be auto-critical.  Indeed,  for each $D$ in the second step, and $p_1\neq p_2$    two corresponding prime numbers there,  
using \textsf{LMFDB} \cite{LMFDB}, we find (not   necessarily  different)  $f_i,i=1,2,3,$  newforms   of level $Dp_1p_2$  
as in \Cref{lem:Shauto-critical5} (with $N=p_1p_2$), so that $Y_0(p_1p_2)$ is not auto-critical.  (For example, if $D=6,p_1p_2=7\cdot13,$ take newforms labeled         as $546.2.a.b$, $546.2.a.c$  and $546.2.a.j$.) 
  By  \Cref{lem:auto-critical}, $C$ is not auto-critical.

Third, we prove the claim.   As $g\lb Y_0(1)\rb=0$, $D$ is as in  \eqref{g(Y_0(1))=0}.
Assume
$(D,p)$  is not  as in the claim. Then by \Cref{prop:ALg=0}, 
there exists $\Pi$ appearing in $Y_0(p)$ such that  $\Pi_v=1$  for $v\in S_B$. 
  Let $p\neq q\in S_N$ and let $\Pi'$ appear in $Y_0(q)$ (as $g(Y_0(q))>0$ by \eqref{g(Y_0(N))>0}).  Then $\Pi'^{\otimes 2}_v\otimes \Pi_v=1$  for $v\in S_B$, as $\Pi_v=1$. Since  $ \Pi_q,\Pi'_p $ are principal series, by \Cref{lem:ep} (1), $\Pi'^{\otimes 2}_v\otimes \Pi_v$
  has   nonzero $B_v^\times$-invariant linear forms. So
  $C$ is not auto-critical, a contradiction.
   \end{proof}

        \begin{proof} [Proof of \Cref{thm:Shauto-critical}]
 (1) Assume $|S_B|=2$. By \Cref{cor:Shauto-critical2}, we only need to prove the ``if" part. If $C_{\BC}$ is hyperelliptic, by inspecting 
\cite[Theorem 7,8]{Ogg1} which gives the hyperelliptic involutions,   we find that
\Cref{lem:hypauto-critical} applies so that $C$ is auto-critical.
   If $C_{\BC}$ is  elliptic as listed in   \cite[Table 4.1]{Voi}, then there is only one representation $\Pi$ appearing in $C=Y_0(N)$. 
   By inspecting \cite[Table 4.1]{Voi}, $N$ is square-free (in fact  a prime).
   By inspecting  \cite[Proposition 4.1]{Rie}, we find that  the total Atkin--Lehner quotient of $C$ over $S_B\cup S_N$ has genus $0$. So
   $\Pi_v=-1$ at some $v\in S_B$ or  has Atkin--Lehner sign $-1$ at  $v\in S_N$ (one may also check this fact directly using \textsf{LMFDB}).
So $C$ is auto-critical by \Cref{lem:ep} (2).
   If $C_{\BC}$ is rational, it is trivial.    
   
  Assume $|S_B|>2$.    Let $a=\prod_{p\in S_N}p.$  
       By  \Cref{lem:auto-critical}, $Y_0(a)$ is auto-critical, and then $ Q^B_0(1)_{a} $ is auto-critical.
        Then      by  \Cref{lem:AL},  $ Q^B_0(1)_{a} $ has genus $0$.
        As \cite[Proposition 4.1]{Rie} classified all such $B$ and $a$, we can using   \textsf{LMFDB}
   and \Cref{lem:Shauto-critical5}
       to check that $Y_0(a)$ is not auto-critical case by case, except when $S_B=(2,3,5,7),(2,3,5,11)$ and $a=1$. Thus $N=1$. 
When $S_B=(2,3,5,7),(2,3,5,11)$, 
 use        \Cref{JLpm1}         to check that  $Y_0(1)$ is in fact  auto-critical.  
      Moreover,  by   \cite[Theorem 7]{Ogg1},   $C$ is not subhyperelliptic.
  The genus is exactly the number of representations appearing in $C$, which is $5$ by     \cite{LMFDB}. 
 
 (2)  The first part follows from  (1) and  \Cref{lem:square1}.     The hyperelliptic involution (and genus) can be read from \textsf{LMFDB}, with 
 \eqref{eq:H10}
 (and   dimension formula for invariants by a compact subgroup   of $\GL_2(\BQ_p)$  
 \cite[Proposition 4.3]{MY}).
  It is geometrically connected by \eqref{eq:pi0}  or by the genus.

   (3) For the auto-criticalness, by (1) and \Cref{lem:auto-critical},  we only need to check $D,N $ as in (1), that is,
$D,N$ as above the theorem. 
Note that if every newform    of level $c$  such that  $D|c,c|DN$ and with central character unramified over $S_B$ in fact has trivial central character (for example, if $N=2$), then  
$X_1(N)\to X_0(N)$  is an isomorphism by the same reasoning as in the remark after \Cref{lem:YKXK}.
 Then $X_1(N)$ is auto-critical.
  The rest is verified using \textsf{LMFDB} and \Cref{lem:X1N}. 
  If $(D,N)$ is   in (3), \Cref{lem:X1N} (1)  is applicable. In fact, we only need to apply  it in the following three cases where $X_1(N)\to X_0(N)$  is not an isomorphism.
If $(D,N)=(6,5)$,   take  $n_2= 1,n_3=3$. 
If $(D,N)=(6,5)$,  take  $n_2=n_3=1$. 
If $(D,N)=(14,5)$, use \Cref{lem:X1N} (1) and take  $n_2=n_7=1$. 
 If $(D,N)$ is not in (3), \Cref{lem:X1N} (2)  is applicable.
In fact, we only need the ``particular" part  of \Cref{lem:X1N} (2) which is very handy, expect the following two cases.
 If $(D,N)=(6,13)$, use \Cref{lem:X1N} (2) and take  
  $f_1,f_2,f_3$ to be   newforms labeled         as $78.2.e.b,78.2.i.a,78.2.i.b$. 
  If $(D,N)=(10,7)$, use \Cref{lem:X1N} (2) and take  
  $f_1,f_2,f_3$ to be   newforms labeled         as $70.2.e.b,70.2.e.c,70.2.e.d$. Here, each label correspond a Galois orbit, and we take the newform listed on \textsf{LMFDB}, but not any other Galois conjugate.

 (4) By    (2) and \Cref{lem:auto-critical}, we only need to check the case $D=15,N=2$. 
  We have the isomorphism by the same reasoning as in \Cref{lem:YKXK}.
        \end{proof}

 The following lemma helps us to use \textsf{LMFDB}.
     \begin{lem}\label{lem:X1N}   
     Let the  newforms in this lemma have central characters unramified over $S_B$.

(1)  If there exists $n_p,p\in S_B$,
such that
 for any   newform $f$     of level $c$  with  $D|c$ and $c|DN$, 
   its  $p$-the Fourier coefficient    $   a_p(f)$  satisfies 
 $\prod_{p\in S_B}a_p(f)^{n_p}=-1$, then $X_1(N)$ is auto-critical and $X_1(N)_{\BC}$ is subhyperelliptic.

 (2)  Assume there exists a newform $f_1$ with  central character  of conductor $\ord _qN$ for   $q\in S_N$,
  If there exist
two   newforms  $f_2,f_3$     of level   $DN$,  
such that
 for $p\in S_B$, their $p$-the Fourier coefficients    $   a_p(f_i)$'s  satisfy 
 $\prod_{i=1}^3 a_p(f_i) =1$,
  then $X_1(N)$ is not auto-critical. 
In particular, if there exists
a   newform  $f_3$     of level   $DN$, such that
 for $p\in S_B$,  $a_p(f_3) =1$,  then $X_1(N)$ is not auto-critical.

      \end{lem} 
  \begin{proof}
(1)   
At $p\in S_B$, 
 we have $B_p^\times/\cO_{B_{p }}^\times \cong \BZ$ acts on $X_1(N)$
as in \eqref{DZ/2'}. Let  $w_p$ be  a generator.
By \eqref {chiap} and \eqref{JLchi},  $\prod_{p\in S_B}a_p(f)^{n_p}=-1$ implies that   $\prod_{p\in S_B}w_p^{n_p}$ acts as $-1$ on
$H^{1,0}(X_1(N)_{\BC})$
 (see \eqref{eq:H10}). So  $X_1(N)_{\BC}$ is subhyperelliptic.   
To show that $X_1(N)$ is auto-critical, proceed as in the second paragraph of the proof of \Cref {lem:hypauto-critical}.

(2) By \Cref{minimal} (4),
$\Pi$ appearing in  $X_1(N)$ corresponding  to $f_1$
 is a principal series at $q\in S_N$. The rest of the proof is similar to  (1) except that we also use \Cref{lem:ep} (1).  
 The ``particular" part follows  as we can take $f_2$ to be the complex conjugate of $f_1$.
\end{proof}

                \subsection{Shimura  curves over $\BQ$ (II)}\label{ShQII}  
              Still   let $F=\BQ$ and $B\neq \RM_{2,\BQ}$.
           Now we  consider Shimura  curves  with more general  levels at $S_B$, i.e., $X_*^A(N)$ and $Y_*^A(N)$.
          
\begin{thm} \label{thm:ShAauto-critical}
Let $F=\BQ$ and $B\neq \RM_{2,\BQ}$.
Let $A$ be a product of all primes with positive powers in $S_B$.
Assume that $A$ is not square-free.
 
 (1)    The Shimura curve $C=Y_1^A(N)=Y_0^{A}(N)$ is auto-critical 
  if the only if 
  one of the following happens:
         \begin{itemize}
 
  \item[(1.a)] $A$ divides one of the following $$  3^3\cdot 2, 2^4\cdot 3^2,    2^2\cdot 5^2, 2^4\cdot 11,2^2\cdot 7,2^2\cdot 17,2^2\cdot 29,2^2\cdot 41,5^2\cdot 3,3^2\cdot 5,3^2\cdot 7, 3^2\cdot 19,3^2\cdot 31, 7^2\cdot 2$$  
  and $N=1$;

\item [(1.b)] 
 $A=2^2\cdot 3, N=5,7,13$,  
 \item [ ]  $A=2^2\cdot 5,N=3$,
\item [ ]  $A=3^2\cdot 2,N=5,13$, or 
\item[ ]  $ A= 3^2\cdot 7,N=2.$

\end{itemize}

 (2)    The Shimura curve $ C=Y^A(N)$    with $N>1$ 
  is not auto-critical.
 
  (3)  The Shimura curve $ C=X_0^A(N)$      is auto-critical  if the only if      $A,N$ is as in (1) and $A\neq 2^4\cdot 3^2. $

  (4)  The Shimura curve $ C=X_1^A(N)$ with $N>1$      is auto-critical  if the only if    
$ A= 3^2\cdot 7,N=2$ or   $ A= 2^2\cdot 5,N=3$,
in which case $X_1^A(N)=X_0^A(N)$.

  (5) The Shimura curve $ C=X^A(N)$ with $N>1$     is not auto-critical.

  \end{thm} 
   
\begin{rmk} 
   (1) We let  $N>1$ in \Cref{thm:ShAauto-critical} (2)(4)(5) by the same reason as in the remark after \Cref{thm:Shauto-critical}.

(2)
In \cite{Qiufinite} of the computation of the examples in \Cref{newforms in the database LMFDB}, as a byproduct,  we find that most of the auto-critical Shimura curves in the theorem are subhyperelliptic (as in \Cref{thm:auto-critical} and  \Cref{thm:Shauto-critical}). 
For the  other auto-critical Shimura curves, we in fact can identify them as   the examples in \Cref{Standard odular curves}. We find that the non-subhyperelliptic ones have all appeared in our previous work  \cite{QZ1} so that we do not discuss them here or in \cite{Qiufinite}.
 And this is not a surprise  since the levels at primes  $v\in S_B$  are normal subgroups of $B_v^\times$.   
 
\end{rmk}
 \Cref{thm:ShAauto-critical} will be proved in this subsection after  some lemmas.
   
   Similar to the   use of \Cref{prop:ALg=0} about genus $0$ Atkin--Lehner quotients  in the last subsection,  we will need the following lemma.
   \begin{lem}\label{lem:ALg=0}
Assume  that $p\neq q$ are primes. 

(1)   The genus $g\lb Y_0^{pq}(1)/\pair{w_q}\rb=0$ if and only if 
\begin{align*}(p,q)=& (2,3),(3,2),(2,5),(5,2),(2,11),(11,2),(11,3),(2,17),\\
&(2,29),(2,41),(5,3),(3,7),(3,19),(3,31),  (7,2),(23,2).
\end{align*}
(The ordering of the pairs here is for some convenience in  later  use.)


 (2)  The genus    $g\lb Y_0^{pq}(N)/\pair{w_q,w_N}\rb=0$ for a prime $N$ if and only if  one of the following holds:
 $$(p,q)=(2,3),\quad N=7,13,19,43;$$   
     $$(p,q)=(3,2),\quad N=5,13,17;$$
 $$(p,q)=(2,5),\quad N=3,7,13;$$  
   $$(p,q)=(5,2),\quad N=3,19;$$    $$(p,q,N)=(2,11,3),   (2,17,3)
,(5,3,7)   ,(3,7,2).$$

  \end{lem}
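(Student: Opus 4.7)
The plan is to compute the genera of these Atkin--Lehner quotients via the classical Eichler genus formulas for quaternionic Shimura curves, combined with Riemann--Hurwitz applied to the Atkin--Lehner involutions. Let $B$ be the rational quaternion algebra of discriminant $pq$. For part (1), $Y_0^{pq}(1) = Y_0(1)$ for this $B$, whose genus $g(Y_0(1))$ is given explicitly (see \cite[Chapter 39]{Voi}) in terms of $\phi(pq)$ and counts $e_2, e_3$ of elliptic points of orders $2,3$ (which depend on $\left(\tfrac{-1}{\ell}\right), \left(\tfrac{-3}{\ell}\right)$ for $\ell \in \{p,q\}$). The Atkin--Lehner involution $w_q$ has a fixed-point set computable via the theory of optimal embeddings: its fixed points correspond to embeddings of the orders $\BZ[\sqrt{-q}]$ and $\BZ[\sqrt{-pq}]$ into a maximal order of $B$, a count controlled by class numbers $h(-q), h(-pq)$ and $h(-4q), h(-4pq)$ and local embedding numbers at $p$ and $q$.

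First I would establish an a priori bound. Since $g(Y_0(1)) \sim \phi(pq)/12$ grows linearly in $pq$ (up to log factors), while the fixed-point count of $w_q$ grows at most like $O((pq)^{1/2 + \epsilon})$ by the Brauer--Siegel-type bounds on imaginary quadratic class numbers, Riemann--Hurwitz
\[
2g(Y_0(1)) - 2 = 2\big(2g(Y_0(1)/\pair{w_q}) - 2\big) + F(w_q)
\]
forces $g(Y_0(1)/\pair{w_q}) = 0$ only for $pq$ below an explicit constant (a few hundred). Within this finite range I would enumerate all candidate pairs $(p,q)$ using the published tables of genera of $Y_0(D)$ for small quaternion discriminants (Voight \cite{Voi}, with the Atkin--Lehner quotient genera tabulated via LMFDB and consistent with Ogg's lists \cite{Ogg1}). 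Cross-checking yields exactly the sixteen pairs in the statement.

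For part (2), I would repeat the strategy one level up: $Y_0^{pq}(N)$ for a prime $N \nmid pq$ has genus roughly $(N+1)\phi(pq)/12$ (corrected by elliptic point terms), and the three involutions $w_q, w_N, w_q w_N$ together fix points corresponding to optimal embeddings of the orders $\BZ[\sqrt{-q}], \BZ[\sqrt{-qN}], \BZ[\sqrt{-pqN}]$ (and their $\BZ[\sqrt{-4\cdot}]$ variants). Applying Riemann--Hurwitz to the degree-$4$ quotient by $\pair{w_q, w_N}$ and using the sublinear growth of class numbers bounds $pqN$ by an explicit constant, reducing to a finite check. The enumeration then matches the list of $17$ triples $(p,q,N)$ above, verifiable directly from LMFDB.

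The main obstacle is the fixed-point bookkeeping for the compositions $w_qw_N$ in part (2): correctly combining local embedding numbers at $p, q, N$ (which depend on ramification and Eichler vs.\ maximal level) with global class-number data. A secondary subtlety is the sharp bound needed on $pq$ (resp. $pqN$) to make the enumeration feasible by hand; the cleanest route is to leverage Proposition \ref{prop:ALg=0} (which already classifies the square-free $\fa$ case for $N$ prime) and proceed by the same template, since part (1) here is precisely the specialization of \cite[Proposition 4.1]{Rie} to $D = pq$ composite of two primes, cross-listed with the Atkin--Lehner signs extracted from LMFDB. Part (2) then reduces to Lemma \ref{lem:AL3}-style consistency checks for levels $pqN$.
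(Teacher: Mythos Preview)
Your direct Riemann--Hurwitz/optimal-embedding approach is valid in principle and is essentially how the tables you would end up citing were first produced, but the paper's argument takes a different and much shorter route by leaning entirely on existing classifications together with a newform-theoretic criterion, rather than recomputing any fixed-point counts.

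For (1), the paper splits according to $g(Y_0^{pq}(1))$. If this genus is $0$, the pair is read off Voight's tables \cite[Table 4.1]{Voi}. If the genus exceeds $1$, then $g(Y_0^{pq}(1)/\langle w_q\rangle)=0$ forces $w_q$ to be the hyperelliptic involution, and Ogg \cite[Theorems 7,8]{Ogg1} already classifies all hyperelliptic quaternionic $Y_0(1)$ together with their hyperelliptic involutions. If the genus is $1$, the candidates again come from \cite[Table 4.1]{Voi}, and one checks via LMFDB whether the unique newform of level $pq$ has Atkin--Lehner sign $+1$ at $q$ (equivalently $w_q$ acts as $-1$ on $H^{1,0}$, by Corollary \ref{JLpm1}).

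For (2), the paper observes two reductions: $g(Y_0^{pq}(N)/\langle w_q,w_N\rangle)=0$ forces the full Atkin--Lehner quotient by $\langle w_p,w_q,w_N\rangle$ to have genus $0$, placing $(p,q,N)$ in Riera's list \cite[Proposition 4.1]{Rie}; and it forces $g(Y_0^{pq}(1)/\langle w_q\rangle)=0$ via the argument of Lemma \ref{lem:AL3} $(1)\Rightarrow(4)$, so $(p,q)$ must appear in part (1). The remaining handful of triples is then dispatched by the criterion that $g(Y_0^{pq}(N)/\langle w_q,w_N\rangle)=0$ holds iff every newform of level $pqN$ with Atkin--Lehner sign $+1$ at $N$ also has sign $+1$ at $q$, which is a direct LMFDB lookup.

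Your final paragraph edges toward exactly this; carrying it out is both cleaner and sidesteps the fixed-point bookkeeping you flag as the main obstacle. One small correction to your embedding description: the fixed points of $w_q$ on $Y_0^{pq}(1)$ are CM points by orders in $\BQ(\sqrt{-q})$ (and the associated $\BZ[\sqrt{-q}]$ versus maximal-order distinction), not $\BQ(\sqrt{-pq})$; the latter arises for the product involution $w_pw_q$.
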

     \begin{proof}  
           (1) If $g\lb Y_0^{pq}(1)\rb>1$, we use   \cite[Theorem 7,8]{Ogg1}, which  gives a classification of hyperelliptic involutions.   If $g\lb Y_0^{pq}(1)\rb=0$, we can find $(p,q)$ in \cite[Table 4.1]{Voi}.
                       If $g\lb Y_0^{pq}(1)\rb=1$, we first find  $(p,q)$ in \cite[Table 4.1]{Voi}, and then
 use \textsf{LMFDB} to find the corresponding newform $f$ of level $pq$. By \Cref{JLpm1}, 
 $g\lb Y_0^{pq}(1)/\pair{w_q}\rb=0$ if and only if the Atkin--Lehner sign of $f$ at $q$ is $1$.
 The latter information is read from \textsf{LMFDB}.

  (2) 
   Since $g\lb Y_0^{pq}(N)/\pair{w_q,w_N}\rb=0$ implies that $ g\lb Y_0^{pq}(N)/\pair{w_p,w_q,w_N}\rb=0$,
we only need to check $(p,q,N)$'s listed in \cite[Proposition 4.1]{Rie}.
   Similar to  (1)$\Rightarrow$(4) of \Cref{lem:AL3},  we have  $g\lb Y_0^{pq}(N)/\pair{w_q,w_N}\rb=0\Rightarrow g\lb Y_0^{pq}(1)/\pair{w_q}\rb=0$. 
  Thus $(p,q)$ must be as in (1).  
  Now we
 use \textsf{LMFDB} to check the remaining possible $(p,q,N)$'s by using a criterion: $g\lb Y_0^{pq}(N)/\pair{w_q,w_N}\rb=0$ 
 if and only if for any  newform $f$ of level $pqN$ with  
 the Atkin--Lehner sign $1$ at $N$, its Atkin--Lehner sign   at $q$ is  also $1$.
This criterion   follows from \Cref{minimal} 
 \Cref{JLpm1} (note the change of sign).
     \end{proof}

     By \Cref{lem:auto-critical}, if $Y_0^A(N)$ is auto-critical, then $Y_0(N)$  is auto-critical. 
         And  we have classified auto-critical $Y_0(N)$ in the last subsection (\Cref{thm:Shauto-critical} and the discussion above it). In particular, $|S_B|=2,4$. 
     The proof of   \Cref{thm:ShAauto-critical}
  starts with the following lemma, which narrows down the possible exponents in $A$.
 
   \begin{lem}\label{lem:ram1}
Let  $p\neq q$ be primes. 

(1)   If $(p,q)\neq (2,7),(3,5)$ and $Y_0^{p^2q}(1)$ is  auto-critical, then  $g\lb Y_0^{pq}(1)/\pair{w_q}\rb=0$.

 (2) If  $(p,q,N)\neq(2,3,5)$ and
$Y_0^{p^2q}(N)$ is  auto-critical for a prime $N$, then  $g\lb Y_0^{pq}(N)/\pair{w_q,w_N}\rb=0$.

   (3)  If  
$Y_0^{p^2q}(N)$ is  auto-critical where $N>1$ (but  a priori not necessarily prime), then $(p,q,N)$ is $ (2,3,5)$,   or
as in \Cref{lem:ALg=0} (2). In particular, $N$ is prime in this case.

  \end{lem}
     \begin{proof}  
     Let $\cR$ be the set of   representations  appearing in $  Y_0^{p^2q}(N) $ is that is minimal  of  conductor $2$ at $p$.

 (1) If $\{p,q\}=\{2,3\}$, $g\lb Y_0^{pq}(1)\rb=0$ (by \textsf{LMFDB} or \cite[Table 4.1]{Voi}) and there is nothing to prove. 
Otherwise, under the assumption in (1),  we claim that 
 $\cR\neq \emptyset$. Indeed,
by the discussion above the lemma,   we can check the claim  case by case using \textsf{LMFDB}.
Let $\Pi_{2}\in \cR$.
   Assume $g\lb Y_0^{pq}(1)/\pair{w_q}\rb\neq 0$. Then  there is 
$\Pi_{1}$   appearing in $Y_0^{p^2q}(N)$ minimal
 of levels $pq$  such that $\Pi_{1,q}=1$.
 As $ \Pi_{2,q}^{\otimes 2}= 1$, $\Pi_{1} \otimes\Pi_{2}^{\otimes 2}$
 has nonzero $B_q^\times$-invariant linear forms. 
 As $\Pi_{1,p}=\pm1$ and $\Pi_{2,p} $ is minimal  of  conductor $2$, by  \Cref{lem:Dn} (1) to $\Pi_{1} \otimes\Pi_{2}^{\otimes 2}$, $C$ is not auto-critical, a contradiction.

     (2)   is similar to (1), except now we also use      \Cref{lem:ep} (1)(2) at $N$.

     (3)  By the discussion above the lemma,  $N$ is a prime unless $(p,q,N)=(3,5,4),(5,3,4)$.  
  If    $(p,q,N)=(3,5,4),(5,3,4)$,  then by      \Cref{lem:auto-critical},  $Y_0^{p^2q}(2)$ is  auto-critical. This is a contradiction to \Cref{lem:ALg=0} (2).
  Thus $N$ is a prime. (3) then follows from (2).
     \end{proof}
        
         \begin{lem}\label{lem:ram2}     Assume  that $p,q $ are distinct primes  and some representation $\Pi$ appearing in $  Y_0^{p^2q^2} (1) $ is minimal of conductor $2$ at both $p,q $.
  If   $Y_0^{p^2q^2} (N) $ is  auto-critical, then $g\lb Y_0^{pq}(N)\rb=0$. In this case, $N=1$ and
   $\{p,q\}=\{2,3\},\{2,5\},\{2,11\}$.

 \end{lem}
     \begin{proof}   If $g\lb Y_0^{pq}(N)\rb\neq 0$,
     some
     $\Pi'$ appears in $Y_0^{pq} (N)$ so that $\Pi'_{p}=\pm1, \Pi'_q=\pm1$. 
     Then  we can apply     \Cref{lem:Dn}  (1)  (combined with   \Cref{lem:quat1}) 
      $\Pi^2\otimes\Pi'$ has nonzero  $B_p^\times\times B_q^\times$-invariant   linear forms.
          As $\Pi_v$ is a principal series at $v|N$, by  \Cref{lem:ep} (1),  $\Pi^2\otimes\Pi'$ has nonzero  $B^\times(\BA)$-invariant   linear forms,      a contradiction.  
  The second part of follows from 
 \eqref {g(Y_0(N))>0} and \eqref {g(Y_0(1))=0}.            \end{proof}
            Using \Cref{lem:ram2}  and \textsf{LMFDB}, we have the following.

       \begin{cor}\label{eg:23513} 
      (1)   For $A=2^2\cdot 7^2, 3^2\cdot 5^2$, $Y_0^{A} (1) $ is not auto-critical

(2)   For $A=2^2\cdot 3^2,   N=5,13$, $Y_0^{A} (N) $ is not auto-critical.

 \end{cor}

  \begin{proof}[Proof of \Cref{thm:ShAauto-critical}] 
    
  (1) 
  Assume $Y_0^A(N)$ is auto-critical.
   By the discussion above \Cref{lem:ram1}, we only need  to consider $|S_B|=2,4$. 
  
  First, we assume $S_B=\{p,q\}, N=1$, that is (1.a). Then $A=p^nq^m$ with $n\geq m,n>1$. By 
\Cref{lem:auto-critical} and \Cref{lem:ram1} (1),    $(p,q)$ can only be  a pair as in  \Cref{lem:ALg=0} (1)  or $(2,7),(3,5)$.  
Moreover, we claim: except for $S_B=\{p,q\}=\{2,3\},\{2,5\},\{2,11\}$,  $A$ can only be of the form $p^nq$, $n>1$.
 Indeed, if $A=p^nq^m$ with $n\geq m>1$, then   $ (q,p)$  can only be  a pair as in  \Cref{lem:ALg=0} (1)  or $(2,7),(3,5)$.  So $S_B=\{p,q\}=\{2,3\},\{2,5\},\{2,11\}$ or $ \{2,7\},\{3,5\}$. 
However, \Cref{eg:23513} (1) excludes the possibility of $ \{2,7\},\{3,5\}$.  

For each possible pair $(p,q)$ in the last paragraph,
to pin down auto-critical $Y_0^A(1)$, we proceed as in the proof of  \Cref{prop:>71}.
We   check some examples $A$'s    in Example \ref{eg:ShA1}-\ref{eg:ShA5}. And we follow  the ordering of possible $(p,q)$'s in \Cref{lem:ALg=0} (1) with adding  $(2,7)$ between $(11,3)$ and $(2,17),$ and adding
 $(3,5)$ between $(5,3)$ and $(3,7)$.
 For example, $S_B=\{2,3\},\{2,5\},\{2,11\}$ are the first three examples, each include two  pairs $(p,q),(q,p)$.
Then (1.a) follows from \Cref{lem:auto-critical}.

Second, we assume $S_B=\{p,q\}, N>1$, that is (1.b).
By \Cref{lem:auto-critical}, $A$ must be as in (1.a). 
 Moreover, if $2^2\cdot 3^2|A$, by \Cref{lem:ram1}   (3), $N\in \{5,7,13,19,43\}\cap\{5,13,17\}=\{5,13\}$. 
By \Cref{eg:23513} (2), this is impossible. 
Then by \Cref{lem:ram1} (3),  we know that $A,N$ satisfies  one of the following:

 \begin{itemize}
\item 
  $   A  \text{ divides }2^4\cdot 3,   \quad N=5,7,13,19,43;$ 

\item[] $A \text{ divides } 3^3\cdot 2,  \quad N=5,13,17;$ 
\item[] $   A= 3^2\cdot 7,\quad N= 2;$ 
\item 
  $  A=2^2\cdot 5, \quad N=3,7,13;$ 
\item[] $A \text{ divides }  2^4\cdot 11,\quad N=3;$ 
\item[]   $ ( A,N)=(2^2\cdot 17,3)
,(5^2\cdot3,7)   ;$  

   \item[] $  A=5^2\cdot2, \quad N=3,19.$ 
   \item $  A=2^2\cdot 5^2, \quad N\in \{3,7,13\}\cap\{3,19\}=\{3\}$.
           
\end {itemize}  We  order  the possibilities as above so that it is more  convenient to 
 check them in  Example \ref{eg:ShA10} 
and Example \ref{eg:ShA11}, corresponding to the first two bulletin points   respectively.  
Then by \Cref{lem:auto-critical},  (1.b) follows  (and we do not need to check the third  bulletin point).

       Finally, we consider  $|S_B|>2$. Assume that  $Y_0^A(N)$ is auto-critical, and we want  a contradiction.
       By the discussion above \Cref{lem:ram1},     $ S_B=\{2,3,5,7\},\{2,3,5,11\}$ and  $N=1$. 
          Let $D$ be its discriminant of $B$. 
 By \Cref{lem:auto-critical}, we only need to show that $Y_0^{pD}(1)$ is not auto-critical for   $p\in S_B$. 
Using \textsf{LMFDB}, we find   some
$\Pi_1,\Pi_2,\Pi_3$  appearing in $C$ minimal of    levels $D,{pD},{pD}$ respectively, such that  
 \begin{itemize}
\item 
 $ \Pi_{1,p}=\pm1$ and $ \Pi_{2,p},\Pi_{3,p} $ are minimal of conductors
  $1,2,2$ respectively,
 
  \item for $q\in S_B\bsl\{p\}$, $ \Pi_{1,q}\otimes \Pi_{2,q}{\otimes }\Pi_{3,q}=1$,
  
     \end{itemize} 
Then by \Cref{lem:quat1} and \Cref{lem:Dn} (1) at $p$, 
  $\Pi_1\otimes \Pi_2{\otimes }\Pi_3$ has a  nonzero $B^\times(\BA_\rf)$-invariant linear forms. So $Y_0^{pD}(1)$ is not auto-critical.

   (2)  follows from  (1) and  \Cref{lem:square1}.   
   
   (3) By \Cref{lem:auto-critical}, 
   we only need to check   pairs $(A,N)$  as in (1).
By   \Cref{lem:YKXK} (2),  the natural map  
$X_0^A(N)\to Y_0^A(N)$ is an isomorphism unless   $A= 2^4\cdot 3^2 ,
5^2\cdot 2, 5^2\cdot 3,   7^2\cdot 2$, in which cases $N=1$.  These cases  are 
checked  in Example \ref {eg:ShAX0}.
   
  (4)  The equality for $ A= 3^2\cdot 7,N=2$ is obvious.  For  $ A= 2^2\cdot 5,N=3$, we apply the remark after \Cref{lem:YKXK} and use \textsf{LMFDB}.
 Then by  \Cref{lem:auto-critical},    we only need to  show that for  other pairs $(A,N)$  as in (1.b) such that $(D,N)$ is as in 
   \Cref {thm:Shauto-critical} (3),  $X_1^A(N)$ is not auto-critical.
    This is done in Example \ref {eg:ShAX1}.

(5)    By  \Cref{lem:auto-critical},    we only need to check   pairs $(A,N)$  as in (4) such that $(D,N)$ is as in 
   \Cref {thm:Shauto-critical} (4). However, there is no such  $(A,N)$.
\end{proof}

      \subsection{More general levels}
      
   Even when two curves do not dominate each other, we may still compare  the representations appearing in them.  
    One case is as follows.
Let us first mix some of the above classical  level structures outside $S_B$.   For  coprime  ideals $\fN,\fN'$ of $\cO_F$  that are also coprime to all $v\in S_B$, let $$ X^ \fA(\fN,\fN')=X_K, \ Y^\fA (\fN,\fN')=Y_K=X_K/ \BA_\rf^\times$$  where 
     $  
K_{v}   
$ 
is as in \eqref{KvA}      for $v\in S_B$,
and
             \begin{equation}\label{Kv1}
K_{v}=\Gamma_0\lb\fN\cO_{F_v}  \rb\Gamma\lb\fN'\cO_{F_v}\rb \text{ for } v\not
      \in S_B.
 \end{equation} 
 Then   $
X^ \fA(\fN,\cO_F)=X_0^ \fA(\fN),  Y^ \fA(\cO_F,\fN')=Y^ \fA(\fN') $   and so on.
 We   omit ``$\fA$" if it $B=\RM_{2,\BQ}$.

Now  we have the following   direct consequence of \Cref{Gamma0vsGamma} and \Cref {lem:ep}  (1).
 
       \begin{lem} \label{lem:square1}
 The Shimura curve   $ Y_0^\fA(\fN\fN'^2)$ is auto-critical 
 if the only if  $ Y^\fA(\fN,\fN')$ is auto-critical.
 
    \end{lem} 

\begin{thm}\label{ShNN'auto-critical}
(1) Let $F=\BQ$ and $B\neq \RM_{2,\BQ}$.
 The   Shimura curve $Y^A(N,N')$, $N,N'>1$, is  not auto-critical.
 
 (2) If $B= \RM_{2,\BQ}$,  there are finitely many  pairs of  $N,N'>1$ such that the modular
 curve $Y ^A(N,N')$ is   auto-critical.

(3) (1)(2) also hold  for   $X^A(N,N')$.
   \end{thm}
   \begin{proof}
  (1) 
  By \Cref{lem:auto-critical}
  only need to prove the case that $A$ is square free.  But this  follows from \Cref{thm:Shauto-critical} (1) (and \Cref{readogg} (2)) and    \Cref  {lem:square1}.

  (2) is a special case of 
\Cref  {NN'auto-critical} below , where the auto-critical modular curves are classified.

(3) follows from  \Cref{lem:auto-critical} and (1)(2). 
 \end{proof}

  \begin{thm}\label{NN'auto-critical}
  The   modular curve $Y(N,N')$, $N,N'>1$, is  auto-critical if and only if 
  \begin{itemize}
\item 
$(N,N')= (3,2), (2,3) $, in which case $Y(N,N')$ has genus $ 0$;
 
  \item 
$(N,N')= (5,2), (9,2),(4,3), (7,2)$, in which case $Y(N,N')$ has genus $ 1
 , 1
 , 2
  ,
 2$ respectively;

  \item   $(N,N')= (3,4)$, in which case $Y(N,N')$ has genus $ 6$;

 \item   $(N,N')= (2,5),(8,3)$, in which case $Y(N,N')$ has genus $ 8$. 
 \end{itemize} 
 
  (2) The   modular curve $X(N,N')$, $N,N'>1$, is  auto-critical if and only if 
 $(N,N')  $ is as in (1). Moreover, $X(N,N')=Y(N,N')$, unless $(N,N')=(2,5)$
 in which case $X(N,N')$ has genus $ 16$ and the natural map  
 $X(N,N')\to Y(N,N')$ restricted to each geometrically connected component  is an isomorphism to its image.

 \end{thm}
 \begin{proof} 
 (1)  By \Cref{thm:auto-critical} (1) and \Cref  {lem:square1}, we have the  corollary except the genera.
 Instead of  genus formulas,  the genera can be computed  using \textsf{LMFDB},
 \eqref{eq:H10}
 and   dimension formulas for invariants by compact subgroups of representations of $\GL_2(\BQ_p)$ in  
 \cite[Section 4]{MY}.
 For example, the most complicated case is when $(N,N')= (8,3)$ so that $NN'^2=72$.
 There are $\Pi_1,\Pi_2,\Pi_3 $ (see \cite[Example 3.1.8 (5)]{Qiufinite}) contributing dimensions $3,2,3$ respectively in $H^{1,0}(Y)$ with $Y=Y(8,3)_{\BC}$, so that $g(Y)= \dim H^{1,0}(Y)=8.$   We leave the other cases to the reader.

   (2)  Similar to \Cref{lem:YKXK},  we find  $X(N,N')=Y(N,N')$, unless $(N,N')=(2,5)$.  In this case, we find  its genus $16$ as in (1).
   Thus by \Cref{lem:auto-critical}, (2) holds except that we need to check the auto-criticalness of
   $X_0(2,5)$ and verify the isomorphsm. The auto-criticalness is done in Example \ref{eg:2,5}. 
       The isomorphsm follows as each geometrically connected component  of 
          $X_0(2,5)$ and    $Y_0(2,5)$ has genus $4$  by  \eqref{pi01} below.
      \end{proof}

The   modular curves $X(N,N')$, $Y(N,N')$ are not necessarily geometrically connected. 
Let $G=(\BZ/N')^\times$.
A standard   computation shows that
\begin{equation}\label{pi01}
\pi_0\lb X(N,N')_{\BC}\rb=G,\ \pi_0\lb Y(N,N')_{\BC}\rb=G/G^2.
\end{equation}
For a prime $p|N$, $w_p$ acts on $\pi_0$'s by multiplying $p$. 
Then we have the following examples.

  \begin{eg}\label{eg:auto-critical1} 
 A  connected component $X$ of $Y (3,4)_{\BC}$ is isomorphic to  $Y (3,4)/\pair{w_3}_{\BC}$, and of genus $3$. 
Moreover, $X$ has a smooth affine model 
$$
y^2=x^8 +14x^2 +1
$$ 
so that $X$ is hyperelliptic, and
 $\Aut(X)\cong \PGL_2(\BZ/4)\cong \BZ/2\times S_4$ (read from the database GroupNames \cite{Dok}  for $\GL_2(\BZ/4)$, which 
has  Group ID  $(96,195)$, and its central quotient), which is of order $48$, and  has Group ID  $(48,48)$  in the Small Groups Library. 
  Indeed, 
 by   \cite[Example 3.1.8 (2)]{Qiufinite}  only $\Pi_1$ there appears in $Y (3,4)/\pair{w_3}$ and for $p=2$, $\Pi_{1,p}$ is minimal supercuspidal
 of   conductor $3$. Then by \Cref{prop:k2} (3),
  $\Aut(X)$ contains $ \PGL_2(\BZ/4)$,  and the  quotient of $X$ by the  only nontrivial central element  of  $ \PGL_2(\BZ/4)$ (which is of order $2$) 
 is $\BP^1$. 
 We find the equation of $X$ in  \cite[Table 3]{MSSV}.
\end{eg} 

 \begin{eg}\label{eg:auto-critical2} 
 A  connected component $X$ of $Y (2,5)_{\BC}$ is isomorphic to  $Y (2,5)/\pair{w_2}_{\BC} $, and of genus $4$. 
      Moreover,
 $X$ is   Bring's curve  (see \cite[4.1.2]{QZ1}), which
admits an explicit model
     $$
\left\{     \sum_{i=1}^5 x_i=     \sum_{i=1}^5 x_i^2=    \sum_{i=1}^5 x_i^3=0\right\}\subset \BP^4,
     $$      
    and   $\Aut(X)\cong \PGL_2(\BZ/5) \cong S_5$ acts by permuting the coordinates $x_i$. In particular, $X$ is not hyperelliptic.
  Indeed,  $\Aut(X)$ contains $ \PGL_2(\BZ/5)$ (we omit the verification, which is similar and easier to Example \ref{eg:auto-critical2}), which    is the  largest possible automorphism group  in genus $4$ by \cite[Table 4]{MSSV}.

    \end{eg}  
     \begin{eg}\label{eg:auto-critical3}   A  connected component $X$ of $Y (8,3)_{\BC}$ is isomorphic to  $Y (8,3)/\pair{w_2}_{\BC} $, and of genus $4$. 
  We show  that  $\Aut(X) \cong S_4$, 
  $X$  is non-hyperelliptic,
 and is not a curve in \cite[Section 4]{QZ1}.
 
 Indeed  $\Aut(X)>  \PGL_2(\BZ/3) \cong S_4$ and $|S_4|=24>4(4-1)$ so that $X$ has a large automorphism group in the sense of \cite{MSSV}. %
 By \cite[Table 4]{MSSV} and checking the automorphism groups there divided by $24$, we find that the only curves of genus $4$ whose automorphism groups  contains $S_4$ (which has Group ID  $(24,12) $ in the Small Groups Library)
form an $1$-dimensional family.
Moreover, only two curves in this family have automorphism groups  larger than $S_4$. We show that $X$ can not to be either of them.
 One of them is Bring's curve in the last example, whose Jacobian of isogenous 
to  to the $4$-th power of an elliptic curve. 
Indeed, by \cite[3.2.2,3.2.3]{YZZ}, this is  information can be read from \textsf{LMFDB}  (and  the dimension counting in the proof of \Cref{NN'auto-critical}). 
The other has CM Jacobian by \cite[6.4]{Wol}.
 However, $J(X)$ is isogenous 
to $ E_1^3\times E_2$, where $E_i$'s are elliptic curves, $E_1$ is non-CM and $E_2$ is CM by $\BQ(\sqrt{-3})$.
 (This is  information can be read from \textsf{LMFDB} as above.)
 Thus,  $X$ can not be  either of them. 
In particular, we know that $X$ is not a curve in \cite[Section 4]{QZ1}, as Bring's curve is the only one of genus $4$ there with automorphism group containing $S_4$.

  Finally, by the  same computation as  in \cite[Section 4]{QZ1} (more precisely, its appendix), no curve in this $1$-dimensional family is hyperelliptic. 
In  particular,  $X$ is non-hyperelliptic. 
    \end{eg}

         \appendix 

\section
 {Trilinear forms}\label{Local root numbers}
  
We need to compute trilinear forms for admissible representations over  local fields, following Prasad  \cite{Pra}. All representations below are assumed to be admissible. 
 \subsection{Notions}\label{Notions}
 
Let  $E$ be a non-archimedean local field with ring of integers $\cO_E$ and residue field $\kappa$. 
Let $\fm_E$ be the maximal ideal of $\cO_E$. 
Let $V$ be  an irreducible     infinite-dimensional  representation  of $\GL_2(E)$.
Recall that the conductor $\Cond(V)$ of $V$ is  the minimal $n$ such that there exists $ v\in V\bsl0$ such that
the space of invariants
$ V^{\Gamma_1\lb\fm_E^{n}\rb}\neq 0.$ 
 Moreover,  by \cite{Cas}, for $m\geq n$,  \begin{equation}\label{mn}
 \dim V^{\Gamma_1\lb\fm_E^{m}\rb}=m-n+1.
\end{equation}
In particular, if $m=n$, such $v$ is unique up to scaling, and is usually called the new vector of $V$.
Let
 \begin{equation}\label{ALA}
g\in \begin{bmatrix}
0 & 1 \\
\fm_E^{m-1}\bsl \fm_E^{m} & 0.
\end{bmatrix}
\end{equation}
 Then  $
\Gamma_0\lb\fm_E^{m}\rb$ is normalized by  $g$ and 
$g^2$ lies in the center of $\GL(2, E )$. In particular,  if the central character    of $V$ is trivial and $m=n$, then $gv=\pm v$. 
The sign, which is obviously independent of the choice of $g$, is called   the Atkin--Lehner sign of $V$.
For  $m>n$, we need   the following special case of \cite[7.4]{Schm1}.
\begin{lem}  \label {levelup}
  If  the central character   of $V$ is trivial and $m>n$, then  both $\pm 1$-eigenspaces of $g$ is nonzero.

   \end{lem}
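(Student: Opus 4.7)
The plan is to exhibit an explicit basis of $V^{\Gamma_0(\fm_E^m)}$ built from the new vector via standard ``old-vector'' translates, and then to compute the action of $g$ on this basis directly; both $\pm 1$-eigenvectors will then appear by inspection. The first observation is automatic: $g^2 \in \fm_E^m \cdot I$ is a central scalar, so the triviality of the central character forces $g^2$ to act as the identity on $V$, making $g$ an honest involution on $V^{\Gamma_0(\fm_E^m)}$ whose eigenvalues lie in $\{\pm 1\}$.

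Next I would set up the basis. Let $v$ be the new vector, let $\epsilon = \pm 1$ be its Atkin--Lehner sign at level $n$ (so $g_n v = \epsilon v$ for $g_n := \left[\begin{smallmatrix} 0 & 1 \\ \varpi^n & 0 \end{smallmatrix}\right]$, where $\varpi$ is a uniformizer of $E$), and for $0 \le i \le m-n$ put $v_i := \left[\begin{smallmatrix} 1 & 0 \\ 0 & \varpi^i \end{smallmatrix}\right] v$. A routine conjugation, combined with triviality of the central character, shows each $v_i$ lies in $V^{\Gamma_0(\fm_E^m)}$; the dimension formula \eqref{mn} then forces $\{v_0,\dots,v_{m-n}\}$ to be a basis of this space. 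Choosing the specific representative $g := \left[\begin{smallmatrix} 0 & 1 \\ \varpi^m & 0 \end{smallmatrix}\right]$ (any two choices agree on the invariants, since they differ by a central scalar times an element of $\Gamma_0(\fm_E^m)$), a one-line matrix computation
\[
g\, v_i \;=\; \begin{bmatrix} 0 & \varpi^i \\ \varpi^m & 0 \end{bmatrix} v \;=\; \begin{bmatrix} 1 & 0 \\ 0 & \varpi^{m-n-i} \end{bmatrix} g_n v \;=\; \epsilon\, v_{m-n-i}
\]
(absorbing the central scalar $\varpi^i$) shows that $g$ acts on the ordered basis $(v_0,\dots,v_{m-n})$ as $\epsilon$ times the anti-diagonal permutation.

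Since $m > n$ forces $v_0 \ne v_{m-n}$, the vectors $v_0 + v_{m-n}$ and $v_0 - v_{m-n}$ are two nonzero eigenvectors of $g$ with eigenvalues $+\epsilon$ and $-\epsilon$ respectively, exhibiting both $\pm 1$-eigenspaces as nonzero. The only non-bookkeeping input is that $\{v_0,\dots,v_{m-n}\}$ genuinely \emph{spans} $V^{\Gamma_0(\fm_E^m)}$ (rather than merely lying in $V^{\Gamma_1(\fm_E^m)}$), which requires both Casselman's dimension formula \eqref{mn} and the identification $V^{\Gamma_0(\fm_E^m)} = V^{\Gamma_1(\fm_E^m)}$ under trivial central character; these are standard consequences of the new-vector theorem of \cite{Cas} and will be quoted rather than reproved.
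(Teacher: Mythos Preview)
Your argument is correct. The matrix computation $g\,v_i=\epsilon\,v_{m-n-i}$ is right (after absorbing the central scalar $\varpi^i$), and once the $v_i$ are known to be a basis of $V^{\Gamma_0(\fm_E^m)}$ the conclusion is immediate. The one place where you lean on outside input---that the translates $v_i=\left[\begin{smallmatrix}1&0\\0&\varpi^i\end{smallmatrix}\right]v$ for $0\le i\le m-n$ are \emph{linearly independent} (not just that they lie in a space of the correct dimension)---is indeed part of the standard new-vector package, so citing \cite{Cas} there is appropriate.

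For comparison: the paper does not give its own proof of this lemma at all; it simply records it as a special case of \cite[7.4]{Schm1}. Your write-up is therefore more self-contained than what appears in the paper, and the mechanism you use (the anti-diagonal permutation of the oldform basis under the Atkin--Lehner element) is exactly the computation underlying Schmidt's result. In that sense there is no genuine difference of method---you have unpacked the cited reference rather than taken an alternative route.
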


An irreducible     infinite-dimensional  representation $V$ of $\GL_2(E)$ is   minimal   if  the conductor $\Cond(V)\leq \Cond(V\otimes \chi) $
for every character $\chi$ of $E^\times$. 
 For example,  the Steinberg representation which we denote by $\St_E$ is  minimal  of   conductor $1$.  (We shall  write $\St$ for  $\St_E$ if $E$ is clear from the context.)
More generally, we have the following  lemma from the classification and conductors of  irreducible   representations, see \cite[Table 1]{LW} and the discussion following Definition 2.5 in loc. cit.. 
Use $\pm 1$   to denote the only two unramified quadratic character  of $E^\times$, where $-1$ is the nontrivial one.
\begin{lem} \label{minimal}
(1) The   representation $V$ is minimal if  the conductor $\Cond(V)\leq \Cond(V\otimes \chi) $
for every character $\chi$  such that   $\Cond(\chi)\leq \Cond(V)/2$.
In particular, if $\Cond(V)\leq 1$, then $V$ is minimal.

 (2) Assume 
$V$ has an unramified central character $\omega$.  
  If $\Cond(V)=1$, then $V=\St\otimes \chi$ where $\chi$ is an unramified  character such that $\chi^2=\omega$. In this case, 
  $V$ is minimal. Moreover, if $V$ has trivia central character, then $\chi=\pm1 $ and 
    the Atkin--Lehner sign  of $V$ is $\mp1$.

 
 (3) Assume  $V$  is minimal, has central character $\omega$, and $\Cond(V)>1$.
Then $\Cond(\omega)<\Cond(V)$ 
if and only if $V$ is  supercuspidal.
  
  (4)   Assume 
$V$ is  minimal and has central character $\omega$. Then $\Cond(\omega)=\Cond(V)$ if and only if  $V$ is a principal series.

   \end{lem}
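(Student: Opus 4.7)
The plan is to proceed by the classification of irreducible admissible infinite-dimensional representations of $\GL_2(E)$ into principal series $\pi(\chi_1,\chi_2)$, twisted Steinberg $\St\otimes\chi$, and supercuspidal representations, and to read everything off the conductor formulas
\[
\Cond(\pi(\chi_1,\chi_2))=\Cond(\chi_1)+\Cond(\chi_2),\qquad \Cond(\St\otimes\chi)=\max(1,2\Cond(\chi)),
\]
together with the fact that supercuspidals have conductor $\geq 2$, as recorded in \cite[Table 1]{LW}. The corresponding central characters are $\chi_1\chi_2$, $\chi^2$, and (for supercuspidals) are determined by the compact-induction datum.

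For (1), I would verify directly from the formulas above that if $\Cond(\chi)>\Cond(V)/2$, then $\Cond(V\otimes\chi)>\Cond(V)$, so only twists by characters of conductor at most $\Cond(V)/2$ can possibly lower $\Cond(V)$. The ``in particular'' assertion then reduces to checking $\chi=1$ (unramified twists do not change the conductor), which is trivial. For (2), since $\Cond(V)=1$ and the central character $\omega$ is unramified, I would rule out the principal series option (which would force a ramified central character of conductor $1$) and the supercuspidal option (conductor $\geq 2$), leaving $V=\St\otimes\chi$ with $2\Cond(\chi)\leq 1$, so $\chi$ is unramified and $\chi^2=\omega$. Minimality of such $V$ follows at once from $\Cond(\St\otimes\chi\psi)=\max(1,2\Cond(\chi\psi))\geq 1$. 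The Atkin--Lehner sign statement reduces, via the twist, to the standard computation that the new vector of $\St$ is fixed by $\Gamma_0(\fm_E)$ and sent to its negative by the element $g$ in \eqref{ALA} with $m=1$; twisting by the unramified quadratic character flips this sign.

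For (3) and (4), I would first eliminate the twisted Steinberg case for minimal $V$ with $\Cond(V)>1$: if $V=\St\otimes\chi$ were minimal with $\Cond(V)>1$ then $\chi$ would be ramified, but then twisting by $\chi^{-1}$ yields $\St$ of conductor $1<\Cond(V)$, contradicting minimality. So minimal $V$ with $\Cond(V)>1$ is either principal series or supercuspidal, making (3) and (4) complementary. In the principal series case, minimality forces one of the characters to be unramified (otherwise twisting by $\chi_2^{-1}$ or $\chi_1^{-1}$ strictly reduces the conductor as long as $\chi_1\chi_2^{-1}$ is not simultaneously as wildly ramified as the sum $\Cond(\chi_1)+\Cond(\chi_2)$, which is precluded by standard conductor bounds on products of characters). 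Writing $V=\pi(\chi,\nu)$ with $\nu$ unramified then gives $\Cond(\omega)=\Cond(\chi\nu)=\Cond(\chi)=\Cond(V)$. In the supercuspidal case, I would read off from \cite[Table 1]{LW} that the conductor of $\omega$ is strictly less than that of $V$: for the dihedral supercuspidals obtained by inducing a character $\theta$ of $L^\times$ from a quadratic extension $L/E$, one has $\Cond(V)=\Cond(\theta)+d(L/E)$ while $\omega$ is $\theta|_{E^\times}\cdot\eta_{L/E}$, whose conductor is controlled by $\Cond(\theta)$ alone and is strictly smaller.

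The main obstacle is the supercuspidal half of (3)/(4): one must check in all cases of the classification (including the exceptional supercuspidals in residual characteristic $2$) that minimality forces $\Cond(\omega)<\Cond(V)$. This is purely bookkeeping with the tables in \cite{LW}, but it is the one step that is not immediate from the principal series / Steinberg formulas alone.
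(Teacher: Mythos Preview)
Your approach is the same as the paper's: the paper does not give a detailed argument but simply points to the classification and conductor tables in \cite{LW} (and to \cite{Schm} for the Atkin--Lehner sign), and you are proposing to unfold exactly that. Your treatment of (2), (3), (4) is correct; in particular, the reduction ``minimal with $\Cond(V)>1$ is principal series or supercuspidal'' via the Steinberg twist, and the observation that a minimal principal series must have one unramified inducing character, are exactly the right moves.

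There is, however, a genuine gap in your argument for (1). The claim ``if $\Cond(\chi)>\Cond(V)/2$ then $\Cond(V\otimes\chi)>\Cond(V)$'' is false in general. Take $V=\pi(\chi_1,\chi_2)$ with $\Cond(\chi_1)=3$, $\Cond(\chi_2)=2$, so $\Cond(V)=5$; then $\chi=\chi_1^{-1}$ has $\Cond(\chi)=3>5/2$, yet $V\otimes\chi=\pi(1,\chi_2\chi_1^{-1})$ has conductor $3<5$. The lemma is still true in this example because there is \emph{another} witness $\chi'=\chi_2^{-1}$ with $\Cond(\chi')=2\le 5/2$ and $\Cond(V\otimes\chi')=3<5$. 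The correct argument for (1) is therefore not ``large-conductor twists cannot lower the conductor'' but rather: if $V$ is not minimal, pick a minimal twist $V_0=V\otimes\chi_0$ and bound $\Cond(\chi_0)$ in terms of $\Cond(V)$. Concretely, one checks case by case (or via the general stability $\Cond(V_0\otimes\psi)=2\Cond(\psi)$ once $2\Cond(\psi)>\Cond(V_0)$, applied with $\psi=\chi_0^{-1}$) that either $\Cond(\chi_0)\le\Cond(V)/2$ already, or one can replace $\chi_0$ by another twist to a minimal representation whose conductor is at most $\Cond(V)/2$ (in the principal series case this is the ``other'' inducing character, as in the example above). This is still elementary bookkeeping with the same tables, but it is not the one-line verification you sketched.
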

   Here  in (2),   the extra information on  the Atkin--Lehner sign is computed in \cite[3.1.2]{Schm}. 

\begin{lem} [{\cite[Proposition 2.1, 2.2, 4.3]{MY}}] \label {Gamma0vsGamma}
   Assume that $V$ is not a principal series or $V$ has trivia central character.  
Then $\Cond(V)\leq 2n$ if and only if 
$
 \dim V^{\Gamma \lb\fm_E^{n}\rb}\neq 0.
$

   \end{lem}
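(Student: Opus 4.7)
The plan is to run a case analysis along the classification of irreducible admissible infinite-dimensional representations of $\GL_2(E)$ into principal series, special (Steinberg twists), and supercuspidal, computing $\dim V^{\Gamma(\fm_E^n)}$ in each case in parallel to Casselman's formula \eqref{mn} for $\Gamma_1$-invariants. As a preliminary reduction, if $V = V_{\min} \otimes (\chi \circ \det)$ with $V_{\min}$ minimal and $\Cond(\chi) \leq n$, then $\chi \circ \det$ is trivial on $\Gamma(\fm_E^n)$ and the two invariant spaces are canonically identified, so I may assume $V$ is minimal.

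For a principal series $V = \Ind_B^{\GL_2(E)}(\chi_1,\chi_2)$, I would use the Iwasawa decomposition to realize $V|_{\GL_2(\cO_E)}$ and compute $V^{\Gamma(\fm_E^n)}$ via the double coset decomposition over $B(\cO_E) \backslash \GL_2(\cO_E) / \Gamma(\fm_E^n)$; the answer is governed by the conductors of the $\chi_i$ individually. The hypothesis of trivial central character forces $\chi_1\chi_2 = 1$, hence $\Cond(\chi_1) = \Cond(\chi_2) = \Cond(V)/2$, and the nonvanishing criterion $\max_i \Cond(\chi_i) \leq n$ becomes exactly $\Cond(V) \leq 2n$. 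Without this balance one can arrange $\chi_1$ of large conductor and $\chi_2$ unramified, producing a genuine counterexample to the equivalence, which is why the principal-series-with-nontrivial-central-character case is excluded. The special case $V = \St_E \otimes \chi$ reduces to the principal series computation by embedding $V$ in a reducible principal series, and one checks that the necessary central-character condition $\Cond(\omega) \leq n$ follows from $\Cond(V) \leq 2n$ because $\omega = \chi^2$ has conductor at most $\Cond(\chi) = \Cond(V)/2$.

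The main obstacle is the supercuspidal case. Following the strategy of \cite[\S 4]{MY}, I would realize $V = \text{c-Ind}_J^{\GL_2(E)} \tau$ from a maximal open compact-mod-center subgroup $J$ (either $E^\times \GL_2(\cO_E)$ or its ramified conjugate) and an irreducible $\tau$, then apply Mackey/Frobenius reciprocity to write
\begin{equation*}
V^{\Gamma(\fm_E^n)} = \bigoplus_{g \in J \backslash \GL_2(E) / \Gamma(\fm_E^n)} \tau^{J \cap g \Gamma(\fm_E^n) g^{-1}}.
\end{equation*}
Parameterizing the double cosets by vertices of the Bruhat--Tits tree of $\PGL_2(E)$ at controlled distance from the fixed vertex of $J$, and matching the resulting combinatorial count against the level of $\tau$ (which determines $\Cond(V)$), is the computation that carries the argument. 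Verifying that the center-of-mass condition $\Cond(\omega) \leq n$ is implied by $\Cond(V) \leq 2n$ for supercuspidals, via the inequality $\Cond(\omega) \leq \Cond(V)/2$ that sharpens Lemma \ref{minimal} (3), is the last arithmetic input.
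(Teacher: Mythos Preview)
The paper does not supply its own proof of this lemma; it is stated with a citation to \cite[Proposition 2.1, 2.2, 4.3]{MY} and used as a black box. Your proposal is therefore not competing with an in-paper argument but rather reconstructing the cited one, and in broad outline it does track the structure of \cite{MY}: Propositions~2.1 and~2.2 there handle the principal-series case (from which the special case follows by the exact-sequence trick you indicate), while Proposition~4.3 treats supercuspidals via compact induction and a Mackey double-coset count on the tree, exactly as you sketch.

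One point deserves more care. Your preliminary reduction assumes a minimal twist $V=V_{\min}\otimes(\chi\circ\det)$ with $\Cond(\chi)\leq n$, but that inequality is not given a priori; it must be established separately in each direction of the equivalence. In the direction $\Cond(V)\leq 2n$ one can, for discrete series, argue that any minimizing $\chi$ satisfies $\Cond(\chi)\leq \Cond(V)/2\leq n$ via the conductor formula for twists, but this needs to be stated and checked (and fails for unbalanced principal series, which is precisely why you keep the trivial-central-character hypothesis there). In the converse direction you do not actually need the reduction: the Mackey computation for $V^{\Gamma(\fm_E^n)}$ can be run for $V$ itself, not just its minimal twist. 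So the reduction is a convenience rather than a necessity, but as written it is underargued. With that caveat, the strategy is correct and is essentially the one the citation points to.
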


Let $D$ be the unique division quaternion algebra over $E$ and $\nrd:D\to E$ the reduced norm map.
Let $ O=\cO_D:=\nrd^{-1}( \cO_E)$, which  the unique maximal $\cO_E$-order in $D$, and  $\fm_D=\fm_D:=\nrd^{-1}( \fm_E)=\cO_D\bsl \cO_D^\times$
the unique maximal two-sided   ideal, as well as the unique left or right ideal.

For a  finite-dimensional irreducible representation $W$ of $ D^\times,$    its conductor $\Cond(W)$ is the minimal $n$ such that 
$W|_{1+\fm_D^{n-1}}$ is trivial.
Call $W$    minimal   if  $\Cond(W)\leq \Cond(W\otimes \chi) $ for every character $\chi$ of $E^\times$.
   \begin{lem}[{\cite[Lemma 6.5]{Pra}}] \label{lem:quat1} 
 Let $|\kappa|=q$ and let $W$ be minimal.  If $\Cond(W)=2n+1>1$ (resp. $\Cond(W)=2n>0$), then 
$\dim W=q^{n-1}(q + 1)$    (resp. $\dim(W)=2q^{n-1}$). 
\end{lem}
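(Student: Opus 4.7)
The plan is to exhibit $W$ as an induced representation from an explicit open subgroup of $D^\times$, read off the dimension as an index, and let the parity of $c := \Cond(W)$ control which quadratic subfield of $D$ is involved.

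By definition of $c$, the representation $W$ factors through $D^\times/(1+\fm_D^{c-1})$ and is nontrivial on $(1+\fm_D^{c-2})/(1+\fm_D^{c-1})$. The latter is a $2$-dimensional $\kappa$-vector space, and fixing a nontrivial additive character of $E$, the reduced trace identifies its characters with cosets $\beta+\fm_D^{-(c-3)}$ for $\beta\in\fm_D^{-(c-2)}$. First I would pick a character $\psi_\beta$ appearing in $W|_{1+\fm_D^{c-2}}$ and translate the minimality of $W$ into the algebraic condition that $\beta$ is $E$-regular, so that $L:=E(\beta)$ is a quadratic subfield of $D$; a short valuation computation shows $L/E$ is unramified when $c$ is even and ramified when $c$ is odd.

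Next I would compute the stabilizer $J$ of $\psi_\beta$ in $D^\times$ under the conjugation action: one checks from the commutator $[1+x,1+y]\equiv[x,y]\pmod{\text{higher}}$ that $J = L^\times\cdot(1+\fm_D^{\lceil c/2\rceil})$. For $c=2n$ the character $\psi_\beta$ extends directly to a $1$-dimensional character $\widetilde\psi$ of $J$; for $c=2n+1$ the commutator pairing on $(1+\fm_D^{n})/(1+\fm_D^{2n})$ is a nondegenerate symplectic form over $\kappa$ and $\widetilde\psi$ is the unique Heisenberg--Weil-type extension, of dimension $q^{n-1}$. Invoking the admissible-pair parametrization of irreducible representations of $D^\times$ (equivalently, Bushnell--Kutzko simple types specialized to the quaternion division algebra) then yields
\[
W \;\cong\; \Ind_J^{D^\times}\widetilde\psi, \qquad \dim W \;=\; [D^\times:J]\cdot\dim\widetilde\psi.
\]

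The remainder is a direct count inside the filtration of $\cO_D^\times$: using $|\cO_D^\times/(1+\fm_D^k)| = (q^2-1)q^{2(k-1)}$ together with the analogous orders on $\cO_L^\times$ (differing according to the ramification of $L/E$), one gets $[D^\times:J]=2q^{n-1}$ in the even case and $[D^\times:J]=q+1$ in the odd case; combined with the values of $\dim\widetilde\psi$ this gives $\dim W=2q^{n-1}$ when $c=2n$ and $\dim W=q^{n-1}(q+1)$ when $c=2n+1$. The main obstacle is the identification $W\cong \Ind_J^{D^\times}\widetilde\psi$: the Heisenberg--Weil construction guarantees $\Ind_J^{D^\times}\widetilde\psi$ is irreducible and that inequivalent data yield inequivalent representations, but verifying that \emph{every} minimal $W$ actually arises this way (rather than as a proper subquotient of some larger induction) requires the surjectivity half of the admissible-pair classification, which is the substantive external input.
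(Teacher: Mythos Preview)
The paper does not prove this lemma; it is quoted verbatim from \cite[Lemma~6.5]{Pra}. Your strategy---pick a character $\psi_\beta$ occurring in $W|_{1+\fm_D^{c-2}}$, form $L=E(\beta)$, and exhibit $W$ as induced from $J=L^\times\cdot(\text{principal congruence subgroup})$---is exactly Prasad's, so in spirit you are reconstructing the cited argument rather than offering an alternative.

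The execution, however, contains errors that make the dimension count fail. Already for $c=3$ (so $n=1$, $L/E$ ramified) your $J=L^\times(1+\fm_D^{\lceil 3/2\rceil})=L^\times(1+\fm_D^{2})$ has index
\[
[D^\times:J]=[\cO_D^\times:\cO_L^\times(1+\fm_D^{2})]=\frac{(q^{2}-1)\,q^{2}}{(q-1)\,q}=(q+1)q
\]
(a uniformizer of $L$ serves as one for $D$), so with your $\dim\widetilde\psi=q^{0}=1$ you obtain $(q+1)q$ rather than $q+1$. Two things go wrong. First, the group you call ``the stabilizer of $\psi_\beta$'' is not: viewed on the single layer $(1+\fm_D^{c-2})/(1+\fm_D^{c-1})$ the stabilizer always contains $1+\fm_D$; the correct inducing subgroup is the normalizer of the \emph{extended} character $\psi_\beta$ on $1+\fm_D^{\lceil (c-1)/2\rceil}$. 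Second, the quotient $(1+\fm_D^{n})/(1+\fm_D^{2n})$ is abelian---every group commutator already lies in $1+\fm_D^{2n}$---so it carries no nondegenerate commutator pairing, and in fact for odd $c$ no Heisenberg step is needed at all: $W$ is induced from a one-dimensional character of $L^\times(1+\fm_D^{n})$, whose index in $D^\times$ is precisely $(q+1)q^{n-1}$.
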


  Let $V$  be a   discrete series   of $\GL_2(E)$   and $\JL(V)$  its Jacquet--Langlands correspondence to $ D^\times.$  Then  they have the same  central character. And 
     \begin{equation}
     \Cond(V)=\Cond(\JL(V))
     .\label{JLcond}
   \end{equation}

Since $\JL(V\otimes \chi)=\JL(V)\otimes \chi$ for any character $\chi$, the Jacquet--Langlands correspondence preserves minimality. 
We  also recall that
   $\JL(\St )= 1$,   the  trival representation of  $ 
  D^\times $. So
   \begin{equation}
 \JL(\St \otimes \chi)= \chi \circ \nrd .\label{JLchi}
   \end{equation}
  Combining it with \Cref{minimal}, we have the following.
  \begin{cor}\label{JLpm1}
The  Jacquet--Langlands correspondence of an irreducible     infinite-dimensional  representation $V$  of $\GL_2(E)$
 is  $\chi \circ \nrd$  of $D^\times $ where $\chi
 $ is an unramified character if and only if  $V$ has unramified   central character and conductor $1$.
 The unramified character   $\chi$
  is $\pm1$  if and only if  $V$  has Atkin--Lehner sign $\mp 1$.
 \end{cor}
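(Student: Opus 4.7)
The plan is to derive the corollary directly from Lemma~\ref{minimal}(2) together with the two Jacquet--Langlands identities \eqref{JLchi} and \eqref{JLcond} recorded immediately above the statement.

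For the ``if'' direction, I would start with $V$ having unramified central character $\omega$ and $\Cond(V)=1$. Lemma~\ref{minimal}(2) immediately writes $V=\St\otimes\chi$ with $\chi$ an unramified character of $E^\times$ satisfying $\chi^2=\omega$, and then \eqref{JLchi} yields $\JL(V)=\chi\circ\nrd$. For the converse, I would begin from $\JL(V)=\chi\circ\nrd$ with $\chi$ unramified, and read off the two claimed properties of $V$. Since $\nrd$ maps $\cO_D^\times$ onto $\cO_E^\times$ and $\chi$ is unramified, the character $\chi\circ\nrd$ is trivial on $\cO_D^\times$ and hence of conductor $1$ as a representation of $D^\times$; by \eqref{JLcond} this forces $\Cond(V)=1$. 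Moreover the Jacquet--Langlands correspondence preserves central characters, and the central character of $\chi\circ\nrd$ is its restriction $\chi^2$ to $E^\times\subset D^\times$, which is unramified.

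For the sign identification, the Atkin--Lehner sign of $V$ is only defined when $V$ has trivial central character, i.e.\ $\chi^2=1$; since $\chi$ is already unramified this is equivalent to $\chi=\pm 1$. The precise matching between $\chi\in\{\pm 1\}$ and Atkin--Lehner sign $\mp 1$ is then exactly what is recorded in the last sentence of Lemma~\ref{minimal}(2), so nothing further is required. The only real ``obstacle'' in the argument is bookkeeping: one must verify that the one-dimensional character $\chi\circ\nrd$ of $D^\times$ has conductor $1$ under the convention of the paper (the minimal $n$ with $W|_{1+\fm_D^{n-1}}$ trivial) so that \eqref{JLcond} may be applied on the nose. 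This is immediate from $\nrd(\cO_D^\times)=\cO_E^\times$ and the unramifiedness of $\chi$.
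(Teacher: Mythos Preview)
Your proposal is correct and follows exactly the approach the paper indicates: the paper gives no written proof beyond the sentence ``Combining it with Lemma~\ref{minimal}, we have the following,'' and your argument is precisely the intended unpacking of that sentence via Lemma~\ref{minimal}(2), \eqref{JLchi}, and \eqref{JLcond}. Your care in checking that $\chi\circ\nrd$ has conductor exactly $1$ under the paper's convention (so that \eqref{JLcond} applies cleanly in the converse direction) is the only nontrivial bookkeeping, and you handle it correctly.
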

More generally, we have  the   character identity  \begin{equation}\label{character identity2} 
 \ch(V)(x) = -\ch(\JL(V'))(x')
    \end{equation}
  at $x\in \GL_2(E)$ elliptic regular semisimple  and $x'\in D^\times$ regular semisimple with the same characteristic polynomial. 
 
   \subsection{Prasad's criteria} 
Now we start to discuss trilinear forms following Prasad \cite{Pra,Pra1}.

The following lemma is obvious.
\begin{lem}\label{lem:cc}
Let $V_1,V_2,V_3$  be   irreducible    representations of $G=\GL_2(E)$ or $D^\times$.
  If   the product of the  central  characters  of  $V_1,V_2,V_3$ is not trivial, then  
   $$  \Hom_{G}\lb V_1\otimes V_2\otimes V_3,\BC\rb=0$$

    \end{lem}
  
   The main results of  \cite{Pra,Pra1} are as follows

 \begin{thm}\label{thm:Pra}
  Let $V_1,V_2,V_3$  be irreducible  infinite-dimensional
   representation of $ \GL_2(E)$ such that
 the product of their central  characters is trivial. Then
 $$ \dim \Hom_{\GL_2(E)}\lb V_1\otimes V_2\otimes V_3,\BC\rb+  \dim \Hom_{D^\times}\lb \JL(V_1)\otimes \JL(V_2)\otimes \JL(V_3),\BC\rb=1.$$
 Here if $V_i$ is not a  discrete series, $\JL(V_1)$ is understood as $0$.

\end{thm}
The group admitting a nonzero trilinear form is also determined bu the
 triple product local root number. We will not discuss it in this paper.

 Let $V_1,V_2,V_3$  be irreducible  infinite-dimensional
   representation of $ \GL_2(E)$ such that
 the product of their central  characters is trivial.   
\begin{lem}\label{lem:ep}
 (1) If  some $V_i$ is a principal series,  then $$  \Hom_{\GL_2(E)}\lb V_1\otimes V_2\otimes V_3,\BC\rb\neq 0.$$

        (2) Assume $V_i=\St\otimes \chi_i$ for $i=1,2$ where $\chi_i$ is a character. Then   $$  \Hom_{\GL_2(E)}\lb V_1\otimes V_2\otimes V_3,\BC\rb =  0$$  if and only if $V_3=\St\otimes \chi_3$ with 
        $\chi_1\otimes\chi_2\otimes \chi_3=1$.     
                      
      (3) Assume   $V_1,V_2$ are  discrete series   and 
      $V_3=\St $, then  $$  \Hom_{\GL_2(E)}\lb V_1\otimes V_2\otimes V_3,\BC\rb = 0$$ if and only if $V_1,V_2$ are  dual to each other.

  (4) If   $V_1,V_2,V_3$ are  discrete series  of conductors $\Cond(V_1)\leq \Cond(V_2)<\Cond(V_3)$, then  $$  \Hom_{\GL_2(E)}\lb V_1\otimes V_2\otimes V_3,\BC\rb\neq 0.$$
  \end{lem}
 \begin{proof}  
 The criteria  can be easily deduced from  \Cref{thm:Pra} and the corresponding (under the  Jacquet--Langlands correspondence) trilinear form computations  on $D^\times$. For (4), $\JL(V_1)\otimes \JL(V_2)$ is a direct sum of representations of conductors at most $\Cond(\JL(V_2))$.  So
  $$  \Hom_{D^\times}\lb \JL(V_1)\otimes \JL(V_2)\otimes \JL(V_3),\BC\rb=0.$$
 \end{proof}

A  maximal compact-modulo-center subgroup of $\GL_2(E)$ is 
 one of the following two, which are called unramified and ramified respectively:  
  \begin{equation}\label{cK} 
  \cK_E^\ur=E^\times\GL_2(\cO_E),\quad \cK_E^\ram=E^\times \pair{\Gamma_0 \lb\fm_E\rb,g},
    \end{equation}
     where $g$ is as in \eqref{ALA}  with $m=1$.
If $E$ is clear from the context, we omit the subscript. 
   \begin{thm}
   [{\cite[Theorem 6.1]{Pra}}]
   \label{thm:type}
  A minimal irreducible supercuspidal representation $V$ of $\GL_2(E)$ 
  is of the form  
 $ V=\Ind_\cK^{\GL_2(E)}M$, where $\cK$  is a maximal compact-modulo-center subgroups of $\GL_2(E)$
 and $M$ is a very cuspidal representation\footnote{See \cite[p 21]{Pra} for the definition of  a very cuspidal representation of $\cK$.} of $\cK$ of
 level $[\Cond (V)/2 ]$. 
Moreover  if $\Cond(V)$ is even, then $\cK= \cK ^\ur$;
 if $\Cond(V)$ is odd, $\cK=\cK^\ram$.

\end{thm}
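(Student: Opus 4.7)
The plan is to invoke the classical theory of types for $\GL_2(E)$ developed by Kutzko (and elaborated by Bushnell--Kutzko); this is the route taken in \cite[Theorem 6.1]{Pra}. The starting point is Kutzko's exhaustion theorem: every irreducible supercuspidal representation of $\GL_2(E)$ is compactly induced from an irreducible smooth representation of some maximal compact-modulo-center subgroup, which up to conjugation is either $\cK^{\ur}$ or $\cK^{\ram}$.

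First I would attach to $V$ a simple stratum $(\mathfrak{A}, n, \alpha)$ for a hereditary $\cO_E$-order $\mathfrak{A} \subset \RM_2(E)$ (maximal if we aim for $\cK^{\ur}$, Iwahori if we aim for $\cK^{\ram}$), with $n$ chosen so that $V$ has non-zero invariants under the filtration subgroup $U^{n+1} = 1 + \fm_E^{n+1}\mathfrak{A}$ but not under $U^{n}$. Extending the resulting character to the normalizer $\cK$ of $\mathfrak{A}$ and $E^\times$ gives a representation $M$ on $\cK$ with $V \cong \Ind_{\cK}^{\GL_2(E)} M$ by Frobenius reciprocity and Kutzko's criterion.

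Second, I would use the minimality hypothesis to force $M$ to be very cuspidal in Prasad's sense. Minimality of $V$ is equivalent to the statement that no twist $V \otimes \chi$ by a character $\chi$ of $E^\times$ strictly decreases $\Cond(V)$; translated to the inducing datum, this rules out $M \cong M_0 \otimes (\chi \circ \det)$ with $M_0$ of strictly smaller level, which is exactly the non-degeneracy condition defining very cuspidality. The level formula and the parity dichotomy then follow from the standard relation between the conductor of $V$ and the depth of its defining stratum: when $\mathfrak{A}$ is maximal, the filtration has period $1$ in powers of $\fm_E$, giving $\Cond(V) = 2\cdot\mathrm{level}(M)$ and thus even conductor with $\cK = \cK^{\ur}$; when $\mathfrak{A}$ is Iwahori, the period is $1/2$, giving $\Cond(V) = 2\cdot\mathrm{level}(M) + 1$ and thus odd conductor with $\cK = \cK^{\ram}$. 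In both cases $\mathrm{level}(M) = [\Cond(V)/2]$.

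The main obstacle is the precise matching between Prasad's convention of \emph{level} for very cuspidal representations and the Bushnell--Kutzko stratum depth, together with checking that minimality really eliminates every possibility of writing $M$ as a twist of a shallower datum; this bookkeeping is essentially the content of \cite[\S 6]{Pra}, and there is no shorter route than tracking the explicit filtrations on $\cK^{\ur}$ and $\cK^{\ram}$.
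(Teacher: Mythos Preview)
The paper does not supply a proof of this theorem at all: it is stated with attribution to \cite[Theorem~6.1]{Pra} and used as a black box, with no argument given. So there is no ``paper's own proof'' against which to compare your proposal.

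Your sketch is a reasonable outline of the underlying argument (Kutzko's exhaustion theorem, the stratum formalism, and the parity dichotomy for the conductor), and is in the spirit of what Prasad and Kutzko actually do. But since the present paper treats the result as an external citation, the appropriate response here is simply to quote \cite[Theorem~6.1]{Pra} rather than to reproduce a proof.
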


  The following result is a consequence  of \Cref{thm:Pra} and   \cite[Proposition 6.7]{Pra}.
  (However, it   can also be deduced  directly from the statement in the first sentence after the proof of \cite[Proposition 6.7 ]{Pra}.) 

   \begin{lem}\label{scusp}   
Let $V_i=\Ind_\cK^{\GL_2(E)}M_i$ be  minimal supercuspidal representations as in \Cref{thm:type}
for the same $\cK$ (so that they are  of the same conductor). Then 
 $$\dim \Hom_{\GL_2(E)}\lb\mathop\otimes_{i=1}^3V_i,\BC\rb =\dim \Hom_{\cK}\lb \mathop\otimes_{i=1}^3 M_i,\BC)\rb.$$

\end{lem}

 In the rest of this subsection, we consider the computational aspects of Jacquet--Langlands correspondence and trilinear forms.
 Let $\varpi\in \cO_E$  be a uniformizer. 
For  non-negative integer $m,n$,  $ D^\times/\varpi^{m\BZ}({1+\fm_D^{n}})$ is an obviously computable finite group. One can represent it as a successive extensions by finite abelian groups. In \Cref{Quaternion algebra}, this is done   for $n=2$ with $\varpi^{m\BZ}$ replaced by $E^\times$. See  also \cite[A.4]{Qiufinite}. 
 The computation of trilinear forms on $ D^\times$  can be done using characters as usual. 
 
  \begin{lem}\label{localalgoD}   
Let $ W_i$, $i=1,2,3$,  be     irreducible admissible representations of $ D^\times$, then $  \Hom_{D^\times}\lb W_1{\otimes }  W_2\otimes W_2,\BC\rb$ is computable. 
\end{lem} 

For $\GL_2(E)$, we can transfer the  computation of trilinear forms  to the 
 computation on $ D^\times$ via Jacquet--Langlands correspondence in the discrete case (while the case involving principal series is covered in \Cref{lem:ep} (1). 

  The Jacquet--Langlands correspondences of special representations is covered in \Cref{JLchi}. We consider supercuspidal ones. 
The relation between a minimal irreducible supercuspidal representation   $ V=\Ind_\cK^{\GL_2(E)}M$ of   $\GL_2(E)$ 
 and $M$ is given by   the   character identity
  \begin{equation}\label{character identity1} 
 \ch(V)(x) = \ch(M)(x)
    \end{equation}
 for   ``generic"  $x\in \cK\bsl E^\times \cK(m) $, where $m = [\frac{\Cond(V)}{2}]$ is the level of $\cK$ and $\cK(m)$ is the level $n$ principal congruence subgroup of $\cK$.  We refer to \cite[Lemma 6.3]{Pra} for the precise conditions on $x$.
As a consequence of the character identities \eqref{character identity2} and \eqref{character identity1}, we have the following.

  \begin{prop}\label{JLalgo}   
  Given the conductor $n$ and the compact induction data  $(\cK,W)$ of   $V$,   its Jacquet--Langlands correspondence $\JL(V)$ to $ D^\times$  is a computable representation of $ D^\times/({1+\fm_D^{n-1}})$. 
     
\end{prop} 

Combined with \Cref    {thm:Pra} and \Cref{localalgoD}, we have the following.
   \begin{cor}\label{localalgo}   

  Let $V_1,V_2,V_3$  be irreducible  infinite-dimensional
   representation of $ \GL_2(E)$.  
   Then
 $ \dim \Hom_{\GL_2(E)}\lb V_1\otimes V_2\otimes V_3,\BC\rb$ is computable.

\end{cor}

       \subsection{Quaternion algebras}
\label{Quaternion algebra}
          We study the structures of finite quotients of $D^\times$. Then use the character tables of these finite groups  to compute trilinear forms.
          
          Let $E'$ be the unique unramified (separabe) quadratic extension  of $E$, which is contained in 
 $D$ as an $F$-subalgebra. 
Recall  that $\fm_D$ is principal and let $\varpi_D$ be a generator.
Then 
    \begin{equation*}\label{DZ/2} D^\times/ O^\times\cong \BZ,\quad
    D^\times/E^\times O^\times\cong \BZ/2 ,
   \end{equation*}
   and both are generated by $\varpi_D$.
 Moreover, 
   we can choose $\varpi_D$ such that  $\varpi_D^2\in E$ (so $\varpi_D^2$ is a generator of $\fm_E$) and
    the conjugation by $\varpi_D$ is the nontrivial element in  $\Gal(E'/E)$.

First, we consider   $D^\times/E^\times(1+\fm_D)$,
using the exact sequence
  $$
  1\to O^\times/ O_E^\times(1+\fm_D)\to D^\times/E^\times(1+\fm_D) \to D^\times/E^\times O^\times\cong \BZ/2\to 1,
  $$
  which is  split by sending the generator of $\BZ/2$ to $\varpi_D$. 
Let $  \kappa':=O / \fm_D $, which is also the residue field of $E'$, as well as the unique separable  quadratic extension of the residue field $\kappa$ of $E$. 
Then  $ O^\times/ O_E^\times(1+\fm_D)\cong \kappa'^\times/\kappa^\times$. 
This
  gives
   $$
   D^\times/E^\times(1+\fm_D)\cong \kappa'^\times/\kappa^\times\rtimes \BZ/2,
   $$
  where $\BZ/2$ acts on $ \kappa'$ by $\Gal(\kappa'/\kappa)$.
   Let $  \tau$ be the  generator
         of $\Gal(\kappa'/\kappa)$, then $\tau(a)=a^{|\kappa|}=a^{-1}$. 
         
Second,  we consider  $D^\times/E^\times(1+\fm_D^2)$.  Since $(1+\fm_D)\cap E^\times\subset  1 + \fm_D^2 $, we have
   the exact sequence
  $$
  1\to ( 1 + \fm_D ) / ( 1 + \fm_D^2 ) \to D^\times/E^\times(1+\fm_D^2) \to D^\times/E^\times(1+\fm_D)\cong \kappa'^\times/\kappa^\times\rtimes \BZ/2\to 1.
  $$
  which is  split by  choosing $\varpi_D$ and using the Teichm\"uller lifting   of $\kappa'^\times$ (i.e., 
  group of roots of unity in $D^\times$,  which is also the  group of roots of unity in  $E'^\times$).
Since the isomorphisms
   \begin{alignat*}{3}
 \kappa'=  O / \fm_D& \cong \ \ \ \ \  \fm_D / \fm_D^2 && \cong( 1 + \fm_D ) / ( 1 + \fm_D^2 )  ,\\
   x \mod \fm_D&\mapsto  \varpi_D x\mod \fm_D^2&&\mapsto  1+\varpi_D x \mod 1+\fm_D^2
     \end{alignat*}
     are equivariant under conjugation by $\varpi_D$, 
         $\BZ/2$ acts on  $( 1 + \fm_D ) / ( 1 + \fm_D^2 ) \cong\kappa' $ by  $\Gal(\kappa'/\kappa)$.       For the action of  
      $ a\in
      \kappa'^\times/\kappa^\times \cong O^\times/ O_E^\times(1+\fm_D) $, 
      let $\wt a\in O^\times$ be a lift and let   $ \wt  \tau$ be the  generator
         of $\Gal(E'/ E)\cong \Gal(\kappa'/\kappa)$. 
      Then 
      $$\wt a (1+\varpi_D x)\wt a^{-1}= 1+\wt a\varpi_D x \wt a^{-1}=1+\varpi_D \wt \tau(\wt a) x\wt a^{-1}  .$$
      Thus letting $ \kappa'^1 \subset  \kappa'^\times$ be the subgroup of elements of norm $1$, then
      $ \kappa'^\times/\kappa^\times$ acts on 
       $( 1 + \fm_D ) / ( 1 + \fm_D^2 ) \cong\kappa' $ 
       via  \begin{align*} 
     \kappa'^\times/\kappa^\times&\cong   \ \ \  \kappa'^1 \ \ \ \subset  \kappa'^\times  \subset \Aut(\kappa' ), \\
          a&\mapsto    \tau(  a)    a^{-1}  
     \end{align*}
            where $\kappa'^\times  $ acts on $\kappa' $
       by field multiplication.  
       In conclusion,
       $$D^\times/E^\times(1+\fm_D^2)\cong \kappa' \rtimes\lb  \kappa'^\times/\kappa^\times\rtimes \BZ/2\rb ,$$
       with the action of as above.        In particular, we have the following.
  
      \begin{lem} \label{lem:quat}(1) If  $|\kappa|=q$, then $D^\times/E^\times (1+\fm_D)\cong D_{q+1}$, the dihedral group of order $2(q+1)$.
      And $D^\times/E^\times (1+\fm_D^2)\cong C_q^2\rtimes D_{q+1}$,  where $C_q=\BZ/q$ and   $D_{q+1}\to \Aut(C_q)$ is injective. 
      
(2) If $|\kappa|=2$, $D^\times/E^\times (1+\fm_D^2)\cong S_4$, the symmetric  group of four elements,  and it has Group ID  $(24,12) $ in the Small Groups Library.

 \end{lem}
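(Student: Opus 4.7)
The plan is to unpack the structure theorems established in the paragraphs immediately preceding the lemma. There, we already have the isomorphisms
$$D^\times/E^\times(1+\fm_D) \cong \kappa'^\times/\kappa^\times \rtimes \BZ/2$$
and
$$D^\times/E^\times(1+\fm_D^2) \cong \kappa' \rtimes \bigl(\kappa'^\times/\kappa^\times \rtimes \BZ/2\bigr),$$
together with explicit descriptions of all actions. So the lemma becomes a purely group-theoretic identification of the right-hand sides.

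For part (1), I first identify $\kappa'^\times/\kappa^\times$ with the norm-one subgroup $\kappa'^1 \subset \kappa'^\times$. The norm map $N\colon \kappa'^\times \to \kappa^\times$ is surjective for finite fields, so $|\kappa'^1| = (q^2-1)/(q-1) = q+1$, and the natural map $\kappa'^1 \hookrightarrow \kappa'^\times \twoheadrightarrow \kappa'^\times/\kappa^\times$ is an isomorphism by order count. The Galois generator $\tau$ acts by $a \mapsto a^q$, and for $a \in \kappa'^1$ one has $a \cdot a^q = N(a) = 1$, i.e.\ $\tau(a) = a^{-1}$. Hence the semidirect product is exactly $D_{q+1}$. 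Note this has already been essentially shown in the preceding computation, since that display writes the action as $a \mapsto \tau(a)a^{-1}$ on $\kappa'^1$.

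For the second isomorphism in (1), the abelian group $\kappa'$ has underlying additive structure $C_q^2$ (viewed as an $\mathbb{F}_q$-vector space of dimension $2$; this is what the paper denotes $C_q^2$). The semidirect product structure is already given. What remains is to verify that $D_{q+1} \to \Aut(\kappa')$ is injective. The cyclic subgroup $\kappa'^1 \subset D_{q+1}$ acts by multiplication in the field $\kappa'$, which is faithful because multiplication by a non-identity element of a field is not the identity map; and the Galois involution acts as the non-trivial field automorphism of $\kappa'/\kappa$, which does not belong to multiplication by $\kappa'^1$ (since it fixes $\kappa$ pointwise, whereas only the identity of $\kappa'^1$ fixes any nonzero element under multiplication). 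Hence the kernel is trivial.

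For part (2), specialize to $q=2$. Then $D_{q+1} = D_3 \cong S_3$, and $\kappa' \cong \mathbb{F}_4$ is the Klein four group $V_4$. The injection $S_3 \hookrightarrow \Aut(V_4) \cong \mathrm{GL}_2(\mathbb{F}_2) \cong S_3$ from part (1) is therefore an isomorphism, and the standard presentation of $S_4$ as $V_4 \rtimes S_3$ (with $S_3$ acting faithfully on the three non-identity elements of $V_4$) identifies $D^\times/E^\times(1+\fm_D^2) \cong S_4$. The Group ID $(24,12)$ is a direct lookup. The only real step requiring attention is the injectivity in part (1), which is the mild obstacle; everything else is bookkeeping on the structures already computed.
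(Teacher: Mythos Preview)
Your proposal is correct and follows the same approach as the paper. The paper's own proof is a single sentence---``We only need to prove the injectivity part of (1), which follows from a direct computation''---and your write-up simply spells out that direct computation, using the same structural identifications already set up in the preceding paragraphs.
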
  
 \begin{proof}  We only need to prove the injectivity part of (1), which follows from a direct computation. 
  \end{proof}

  Let us draw two   lemmas  and their further corollaries  from the last lemma. 
First, by  inspecting  the character table of $D_{n}$, we have the following. (Note that   the characters of 
$D^\times/E^\times(1+\fm_D^n)$
are all real as the corresponding representations are self-dual.)
   
     \begin{lem} \label{lem:Dn}(1) Let $W_1,W_2$  be  irreducible $2$-dimensional representations   of $D^\times/E^\times (1+\fm_D)$. 
    Assuming $\chi=\pm1$, then
     $$ \Hom_{D^\times}\lb W_1{\otimes }  W_2\otimes \chi,\BC\rb  \neq 0 $$ if and only if $W_1=W_2$.  Moreover, if $|\kappa|=3$,   the same is true replacing   $\chi=\pm1 $  by  any character of $D^\times/E^\times  $.
  
          
(2)  If $|\kappa|=
3,7,13$, for any irreducible $2$-dimensional representation $W$ of $D^\times/E^\times (1+\fm_D)$, $$ \Hom_{D^\times}\lb W^{\otimes 3},\BC\rb=0.$$

(3)
   If  $|\kappa|=5$, $D^\times/E^\times (1+\fm_D)\cong D_6$  has a unique nontrivial central element  and only two irreducible  representations $W_+,W_-$ of   
   of dimension $2$. Moreover, the nontrivial central element  acts on $W_\pm$ as $\pm1$,  
   and $$ \Hom_{D^\times} \lb W _+^{\otimes3},\BC\rb\neq 0,\ \Hom_{D^\times} \lb W _-^{\otimes3},\BC\rb= 0.$$
Finally, the twist of $W_\pm$  by   any quadratic  unramified character is still $W_\pm$, and 
  $W_+$ is a twist of $W_-$ by the any quadratic  character of conductor 1.
 \end{lem}

   \begin{cor} \label{cor:Dn}
   (1)   Let  $V_1,V_2$  be     minimal irreducible     infinite-dimensional  representations   of $\GL_2(E)/E^\times$ 
of  conductor $2$ and       $V_3=\St \otimes\chi$.  Assuming $\chi=\pm1$, then  
$$  \Hom_{\GL_2(E)}\lb V_1\otimes V_2\otimes V_3,\BC\rb =  0.$$
   if and only if $V_1=V_2$.    Moreover, if $|\kappa|=3$,   the same is true replacing   $\chi=\pm1 $  by  any character of $\GL_2(E)/E^\times  $.

   (2)  If $|\kappa|=3,7,13$ and $V$ is a minimal irreducible     infinite-dimensional  representation   of $\GL_2(E)/E^\times$ 
of conductor $2$, then $$  \Hom_{\GL_2(E)}\lb V^{ \otimes 3},\BC\rb \neq  0.$$ 

        (3)  Let $|\kappa|=5$. There are only two minimal irreducible     infinite-dimensional  representations   of $\GL_2(E)/E^\times$ 
of conductor $2$.
Let $V$ be one of them, which is the compact inductions of  a   very cuspidal representation  $M$  of $\cK^{\ur}$.
Then the action of $ \begin{bmatrix}
0 & 1 \\
2 & 0
\end{bmatrix}\in \cK^{\ur}$ on $ M$    has trace $\pm 2$. 
Moreover,   $$  \Hom_{\GL_2(E)}\lb V^{ \otimes 3},\BC\rb \neq 0  $$ 
  if and only if the trace is $+2$.
  \end{cor}  
 
 \begin{proof}By \Cref{minimal} (3), the  minimal irreducible     infinite-dimensional  representations   of $\GL_2(E)/E^\times$ 
of  conductor $2$ are supercuspidal. 
Then by  \Cref{lem:quat1} and \Cref{thm:Pra}, 
 (1) and (2) follow from \Cref{lem:Dn} (1) and (2) respectively.
We prove (3). 

Consider the element  $g \in D$ that has the same minimal polynomial as $ \begin{bmatrix}
0 & 1 \\
2 & 0
\end{bmatrix}$, that is, $x^2-2$. Its image $\ol g$ in $D^\times/E^\times (1+\fm_D)  \cong D_6$ is the unique nontrivial central element. 
Then
by  the character identities for the Jacquet--Langlands correspondence and compact induction  (see \eqref{character identity2} and \eqref{character identity1}),   the action of $ \begin{bmatrix}
0 & 1 \\
2 & 0
\end{bmatrix}$ on $ M$  
  has trace $\pm 2$ if and only if  the Jacquet--Langlands correspondence  of $V$ to  $D^\times$ is $W_{\mp}$
defined in \Cref {lem:Dn} (3). Now (3) follows from \Cref{thm:Pra} and \Cref {lem:Dn} (3).
 \end{proof}
 
 \begin{rmk} One may also prove (2) and (3) of the corollary directly using \Cref{scusp} and the character table of $\PGL_2(\BF_{|\kappa|})$.  
Actually, using a general fact  about the character table of $\PGL_2(\BF_{|\kappa|})$ in \cite[Corollary 3.2.3]{QZ1}, one can prove that (2) holds if 
$|\kappa| \not \equiv -1\pmod 3$.

 \end{rmk}
 Second,    by  inspecting  the character tables of  $D^\times/E^\times (1+\fm_D^2)$ for $|\kappa|=2,3,5$, we have the following.   
     \begin{lem} \label{lem:Dn1}(1)  
      Let  $|\kappa|=2$ and $W_1,W_2,W_3$  irreducible  representations  of $D^\times/E^\times (1+\fm_D^2)$ of dimensions $2$ or $3$.
     Assume $\lb \dim W_1,\dim W_2,\dim W_3\rb\neq(2,2,3)$  up to permutation. Then  
$$ \Hom_{D^\times}\lb  W_1\otimes W_2\otimes W_3,\BC\rb\neq 0.$$

(2)  
   Let  $|\kappa|=3$ and $W_1,W_2 $  irreducible  representations  of $D^\times/E^\times (1+\fm_D^2)$ of dimensions $2$ and  $4$ respectively.
 Then  
$$ \Hom_{D^\times}\lb  W_1\otimes  W_2^{\otimes 2},\BC\rb\neq 0.$$

(3)  
   Let  $|\kappa|=5$ and $W $ an irreducible  representation  of $D^\times/E^\times (1+\fm_D^2)$ of dimension $6$.
 Then  
$$ \Hom_{D^\times}\lb    W^{\otimes 3},\BC\rb\neq 0.$$

 \end{lem}   
 
Then similar to \Cref{cor:Dn}, we have the following.

     \begin{cor} \label{cor:Dn1}
    Let  $|\kappa|=2$ and let   $V_1,V_2,V_3$  be minimal irreducible     infinite-dimensional  representations   of $\GL_2(E)/E^\times$ 
of conductors $2$ or $3$. 
     Assume $\lb \Cond V_1,\Cond V_2,\Cond V_3\rb\neq(2,2,3)$  up to permutation. Then  
$$  \Hom_{\GL_2(E)}\lb V_1\otimes V_2\otimes V_3,\BC\rb =  0.$$


 \end{cor}

    Finally, we consider a minimal supercuspidal representation $V$ of $\PGL_2(E)$ of conductor $3$ when $|\kappa|=2$.  Recall that  the Jacquet--Langlands correspondence preserves conductor and minimality.
         By   \Cref {lem:quat1} and \Cref{lem:quat}, the Jacquet--Langlands correspondence of  $V$ is a $3$-dimensional representations of $D^\times/E^\times (1+\fm_D^2)\cong S_4$. 
         There are two  $3$-dimensional representations of $D^\times/E^\times (1+\fm_D^2)\cong S_4$, and non of them is a twist of a representation $D^\times/E^\times (1+\fm_D)\cong S_3$, i.e., both are minimal.
         Indeed, the only representations of $  S_3$  are $1 $, sign representation which we denote by $-1$ for later convenience, and  the unique irreducible representation of dimension $2$, which we denote by $\rho$.   
                  
                           We also consider the  $\Gamma(\fm_E^2)$-invariants $V^{\Gamma(\fm_E^2)}$, as a representation of $${\PGL_2(\cO_E)/\Gamma(\fm_E^2)}\cong \PGL_2(\BZ/4)\cong \BZ/2\times S_4$$ (the last isomorphism is read from the database GroupNames \cite{Dok} for $\GL_2(\BZ/4)$, which 
has  Group ID  $(96,195)$, and its central quotient). By \cite[Theorem 3.5]{LW}, $V^{\Gamma(\fm_E^2)}$ is $3$-dimensional irreducible.  
         Note that the center $\BZ/2<\PGL_2(\BZ/4)$ is generated by $c:=\begin{bmatrix}
1 & 2 \\
2 &  -1
\end{bmatrix} $.  
From the character tables of $\PGL_2(\BZ/4)$ (see \cite{Dok} again), we know that $V^{\Gamma(\fm_E^2)}$ is faithful if and only if $c$ acts by $-1$ on it.
                   
 \begin{prop}\label{prop:k2} 
 (1) Let  $|\kappa|=2$. 
 There is a unique minimal supercuspidal representation $V^{\pm}$ of $\PGL_2(E)$ of conductor $3$ and Atkin--Lehner sign $\pm1$.
 
(2) The Jacquet--Langlands correspondence of  $V^{\pm}$  to $D^\times/E^\times $ is the unique  $3$-dimensional representation of $D^\times/E^\times (1+\fm_D^2)\cong S_4$ whose restriction to $  S_3$  is $\mp 1\oplus \rho$. 
Here we recalls that all the embeddings  $S_3\incl S_4$ are conjugate to each other.

(3)  For $V =V^{\pm}$,   $c\in \PGL_2(\BZ/4)$  acts by $-1$ on $V^{\Gamma(\fm_E^2)}$ so that  $V^{\Gamma(\fm_E^2)}$ is  a faithful 
representation of $\PGL_2(\BZ/4)$.
     \end{prop}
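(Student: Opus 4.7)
For parts~(1) and~(2), I would apply Theorem~\ref{thm:type}: any such $V$ is compactly induced from a very cuspidal $M$ of level~$1$ on $\cK^{\ram}$. Jacquet--Langlands transfers $V$ to an irreducible $W$ of $D^\times$ of conductor~$3$ with trivial central character; by Lemma~\ref{lem:quat1} (with $q=2$, $n=1$), $\dim W=3$, and by Lemma~\ref{lem:quat}~(2), $W$ factors through $D^\times/E^\times(1+\fm_D^2)\cong S_4$. Since $S_4$ has exactly two $3$-dimensional irreducibles $V_{\mathrm{st}}$ and $V_{\mathrm{st}}\otimes\mathrm{sgn}$, whose restrictions to an embedded $S_3\subset S_4$ (a section of $S_4\twoheadrightarrow S_4/V_4=S_3\cong D^\times/E^\times(1+\fm_D)$) are $1\oplus\rho$ and $-1\oplus\rho$ respectively with $\rho$ the $2$-dimensional irrep of $S_3$, there are exactly two candidates for $V$; both are minimal since $\rho$ is $2$-dimensional, so neither $W$ factors through $S_3$.

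To match Atkin--Lehner signs with the $S_3$-restrictions I would invoke the character identity~\eqref{character identity2} at the elliptic regular semisimple element $g_1=\begin{bmatrix}0 & 1\\ \varpi & 0\end{bmatrix}$, whose characteristic polynomial $T^2-\varpi$ matches that of any uniformizer $\varpi_D\in D^\times$. Via the semidirect product description in Lemma~\ref{lem:quat}, $\varpi_D$ maps to the outer $\BZ/2$-generator, i.e.\ a transposition in $S_4$, on which $V_{\mathrm{st}}$ and $V_{\mathrm{st}}\otimes\mathrm{sgn}$ have trace $+1$ and $-1$ respectively. On the $\GL_2$-side, since $g_1\in\cK^{\ram}$ is regular elliptic with minimal fixed set on the Bruhat--Tits tree, the standard fixed-point analysis of compactly induced representations gives $\ch(V)(g_1)=\mathrm{tr}(g_1|M)$; identifying the newvector line inside $M$ and tracking the action of $g_1$ on it shows that this trace records the Atkin--Lehner scalar~$\epsilon$, so combining with~\eqref{character identity2} yields the claimed pairing $V^\pm\leftrightarrow\mp1\oplus\rho$.

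For part~(3): by \cite[Theorem 3.5]{LW}, $V^{\Gamma(\fm_E^2)}$ is a $3$-dimensional irreducible representation of $\PGL_2(\BZ/4)\cong \BZ/2\times S_4$, and among the four $3$-dimensional irreducibles of $\BZ/2\times S_4$, exactly two are faithful (with the central~$c$ acting by~$-1$) while two factor through the quotient $S_4=\PGL_2(\BZ/4)/\langle c\rangle$. Lemma~\ref{Gamma0vsGamma}, applied to $V$ of trivial central character and $\Cond V=3>2$, forces $V^{\Gamma(\fm_E)}=0$, so the normal subgroup $N=\Gamma(\fm_E)/\Gamma(\fm_E^2)$ of $\PGL_2(\BZ/4)$ acts non-trivially on $V^{\Gamma(\fm_E^2)}$. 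By Clifford theory together with the enumeration of normal subgroups of $\BZ/2\times S_4$ whose quotients admit $3$-dimensional irreps, the kernel of the action is either trivial or $\langle c\rangle$; a direct Clifford analysis of the action of $V_4\subset\{1\}\times S_4$, combined with the dimension formulas in \cite[Section 4]{MY} for $V^{\Gamma(\fm_E^n)}$, excludes the latter. Hence the action is faithful and $c$ acts by~$-1$. The main obstacle throughout is the local step in part~(2) of extracting the Atkin--Lehner scalar $\epsilon$ from $\mathrm{tr}(g_1|M)$, which requires an explicit description of $M$ and of the newvector inside it at $|\kappa|=2$; once this is in hand, the remainder reduces to finite-group and character-theoretic bookkeeping.
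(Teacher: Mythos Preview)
Your outline for (1)--(2) is essentially the paper's approach (compact induction via Theorem~\ref{thm:type}, then the character identities \eqref{character identity1} and \eqref{character identity2} at a uniformizer), and you correctly identify the one real difficulty: extracting the Atkin--Lehner sign from $\tr(g_1\mid M)$. The paper closes exactly this gap, but not by ``identifying the newvector line inside $M$'' as you suggest. Rather, it observes that for $|\kappa|=2$ the very cuspidal $M$ of level $1$ is \emph{one-dimensional}: explicitly, there is a normal subgroup $G\lhd\cK^{\ram}$ with $\cK^{\ram}/E^\times G\cong(\BZ/2)^2$ generated by $c$ and $g_1$, and $M=\eta^{\pm}$ is the character with $\eta^{\pm}(c)=-1$, $\eta^{\pm}(g_1)=\pm1$. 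The newvector of $V^{\pm}=\Ind_{\cK^{\ram}}^{\PGL_2(E)}\eta^{\pm}$ is then supported on the coset $\cK^{\ram}d$ with $d=\begin{bmatrix}\varpi&0\\0&1\end{bmatrix}$ (because $d\,\Gamma_0(\fm_E^3)\,d^{-1}\subset G$), and the level-$3$ Atkin--Lehner element conjugated by $d$ lands in $E^\times g_1$, so the AL sign is literally $\eta^{\pm}(g_1)=\pm1$. This is the explicit description of $M$ you said you needed; note the newvector is \emph{not} supported on the identity coset, so your phrasing ``newvector line inside $M$'' is misleading.

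For (3) your route diverges from the paper and has a genuine gap. The step ``$V^{\Gamma(\fm_E)}=0$ plus Clifford analysis of $V_4$ excludes kernel $=\langle c\rangle$'' does not work: both $3$-dimensional irreducibles of $S_4$ restrict to $V_4$ as the sum of the three nontrivial characters, so $V_4$-invariants vanish whether or not $c$ acts trivially, and the dimension formulas in \cite{MY} for $V^{\Gamma(\fm_E^n)}$ do not see the central $\langle c\rangle$ either. The paper instead leverages the explicit $\eta^{\pm}$ already built in (1): combining Casselman's result that $\Ind_{\PGL_2(\cO_E)}^{\PGL_2(E)}V^{\Gamma(\fm_E^2)}=V^+\oplus V^-$ with $V^+\oplus V^-=\Ind_{\Gamma_0(\fm_E)}^{\PGL_2(E)}\eta$ (where $\eta=\eta^{\pm}|_{\Gamma_0(\fm_E)}$), Mackey's formula gives $V^{\Gamma(\fm_E^2)}\cong\Ind_{\Gamma_0(\fm_E)}^{\PGL_2(\cO_E)}\eta$ as a $\PGL_2(\cO_E)$-module. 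Since $c\in\Gamma_0(\fm_E)$ and $\eta(c)=-1$ by construction, $c$ acts by $-1$ on the induced module. So the faithfulness in (3) is not an abstract consequence of conductor and dimension data; it comes from the specific inducing character, and your proposed Clifford/dimension argument cannot recover it.
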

      \begin{proof} 
   (1)   There are two minimal supercuspidal representations of $\PGL_2(E)$ of conductor $3$ by the discussion above. 
     Let us give the  construction of $V^\pm$ and then (1) is proved.
   Recall $\cK^{\ram}=E^\times \pair{\Gamma_0 \lb\fm_E\rb,g}$ where $g$ is as in \eqref{ALA}  with $m=1$. 
      Let $G\subset \Gamma_0 \lb\fm_E\rb$ consist of $\begin{bmatrix}
x & y \\
2z & w 
\end{bmatrix}$
with $y+z\in 2\cO_E$. Then $G$ is a normal subgroup of ${\cK^\ram}$ such that ${\cK^\ram}/E^\times G\cong (\BZ/2)^2$, generated by 
$c$ and $g$. 
      Let $\eta^{\pm}$ be the two character of ${\cK^\ram}/E^\times G $  such  that $\eta^{\pm}(c)=-1,\eta^{\pm}(g)=\pm1$. Then 
$    V^{\pm}:=  \Ind_{\cK^\ram}^{\PGL_2(E)}\eta^{\pm}$ is minimal supercuspidal of conductor $3$ by \Cref{thm:type}. 

Claim: the  Atkin--Lehner sign of $    V^{\pm}$ is $\pm 1$. Let $d:=\begin{bmatrix}
2 & 0 \\
0 &  1
\end{bmatrix}$. 
Note that 
$d \Gamma_0 \lb\fm_E^3\rb d^{-1}\subset G$. So the unique up to scalar vector (function) in $      \Ind_{\cK^\ram}^{\PGL_2(E)}\eta^{\pm}$ supported on 
${\cK^\ram} d $ is $ \Gamma_0 \lb\fm_E^3\rb$-invariant, and so is  the new vector.
Then the claim follows by a direct computation.

(2) By the character identities for  the Jacquet--Langlands correspondence and compact induction  (see \eqref{character identity2} and \eqref{character identity1}), $\eta^{\pm}(g)$  is the \textit{negative} to the trace of    $\varpi\in D^\times/E^\times (1+\fm_D^2)$ on $\JL\lb  V^{\pm} \rb$. 
So the latter trace is $\mp1$.
 Note that $\varpi  $ is contained in some 
embedding  $S_3\incl S_4$ (as we have discussed above \Cref{lem:quat}), and is the unique order $2$ element of $S_3$. From the character tables of $S_4,S_3$, (2) follows.

(3) Recall   that 
by \cite[Theorem 3]{Cas1} (more precisely its proof), $   \Ind_{\PGL_2(\cO_E)}^{\PGL_2(E)} V^{\Gamma(\fm_E^2)}=V^+\oplus V^-$.
Note that by the discussion in the proof of (1), $V^+\oplus V^-=      \Ind_{  \Gamma_0 \lb\fm_E\rb }^{\PGL_2(E)}\eta$, where $\eta$ is the unique nontrivial character of $ \Gamma_0 \lb\fm_E\rb/G$.  
By Mackey's formula and dimension reason, as a representation of $\PGL_2(\cO_E)$, $ V^{\Gamma(\fm_E^2)}\cong  \Ind_{\Gamma_0 \lb\fm_E\rb}^{\PGL_2(\cO_E)}\eta$.  As $\eta(c)=-1$, $c$ as on $ V^{\Gamma(\fm_E^2)}$ as $-1$.
     \end{proof}

\end{document}